\documentclass[12pt]{article}
\usepackage[utf8]{inputenc}
\usepackage[margin=1in]{geometry}
\usepackage{graphicx}
\usepackage[table]{xcolor}
\usepackage{subcaption}

\usepackage{amsmath, amsfonts, amssymb, amsthm, amsbsy, mathtools, mathrsfs}

\newtheorem{theorem}{Theorem}[section]
\newtheorem{lemma}[theorem]{Lemma}

\usepackage{algorithm} 
\usepackage{algpseudocode}

\DeclareMathOperator{\tr}{tr}
\DeclareMathOperator*{\argmin}{\arg\!\min}

\usepackage{url, hyperref, lineno}
\usepackage{multirow}
\usepackage{booktabs}

\usepackage{comment}
\usepackage[square,numbers]{natbib}
\bibliographystyle{Frontiers-Vancouver} 

\begin{document}

\Large
\noindent \textbf{CUR Matrix Approximation through Convex   \newline Optimization for Feature Selection} \newline \newline  
\normalsize
Kathryn Linehan, University of Virginia, Research Computing, Charlottesville, VA, USA \newline  
Radu Balan, University of Maryland, Department of Mathematics, College Park, MD, USA

\begin{abstract}

The singular value decomposition (SVD) is commonly used in applications requiring a low rank matrix approximation. However, the singular vectors cannot be interpreted in terms of the original data.  For applications requiring this type of interpretation, e.g., selection of important data matrix columns or rows, the approximate CUR matrix factorization can be used.  Work on the CUR matrix approximation has generally focused on algorithm development, theoretical guarantees, and applications.  In this work, we present a novel deterministic CUR formulation and algorithm with theoretical convergence guarantees. The algorithm utilizes convex optimization, finds important columns and rows separately, and allows the user to control the number of important columns and rows selected from the original data matrix.  We present numerical results and demonstrate the effectiveness of our CUR algorithm as a feature selection method on gene expression data.  These results are compared to those using the SVD and other CUR algorithms as the feature selection method. Lastly, we present a novel application of CUR as a feature selection method to determine discriminant proteins when clustering protein expression data in a self-organizing map (SOM), and compare the performance of multiple CUR algorithms in this application. \newline

\noindent \textbf{Keywords:} CUR matrix approximation, convex optimization, low rank matrix approximation, feature selection, interpretation
\end{abstract}

\section{Introduction}
\label{sec:intro}

Low rank matrix approximations are common tools in many applications including principal component analysis (PCA), signal denoising, and least squares. While the truncated singular value decomposition (SVD) is the optimal approximation in terms of matrix reconstruction (Eckart-Young theorem), the singular vectors cannot be interpreted in terms of the original data. Mahoney and Drineas \cite{mahoney_drineas_2009} provided an example of this: $[\frac{1}{2}\text{age} - \frac{1}{\sqrt{2}}\text{height} + \frac{1}{2}\text{income}]$ is an eigenvector for a dataset of features about people that ``is not particularly informative or meaningful''.  However, the approximate CUR matrix factorization can be interpreted in terms of the original data, making it an attractive low-rank approximation option, especially for applications that seek important matrix columns or rows. Several of these applications exist \cite{sorenson_embree_2016}, e.g., selecting important genes from gene expression data in order to cluster patients by cancer type \cite{mahoney_drineas_2009}, and more broadly can be considered feature selection applications.  

The approximate CUR factorization of $\mathbf{X} \in \mathbb{R}^{m \times n}$ is generally computed in three steps, but steps (1) and (2) can also be computed simultaneously: (1) select $c \in \mathbb{R}$ columns of $\mathbf{X}$ and let $\mathbf{C} \in \mathbb{R}^{m \times c}$ contain these columns, (2) select $r \in \mathbb{R}$ rows of $\mathbf{X}$ and let $\mathbf{R} \in \mathbb{R}^{r \times n}$ contain these rows, and (3) compute $\mathbf{U} \in \mathbb{R}^{c \times r}$ so that $\mathbf{CUR}$ is a good approximation to $\mathbf{X}$.  The result is a matrix approximation
\begin{equation*}
\underset{m \times n}{\mathbf{X}} \approx \underset{m \times c}{\mathbf{C}} \hspace{0.5em} \underset{c \times r}{\mathbf{U}} \hspace{0.5em} \underset{r \times n}{\mathbf{R}} , 
\end{equation*}
\noindent  where generally $c \ll n$ and $r \ll m$.  Hence, CUR maintains the structure of the data, for example sparsity or nonnegativity, and
 $\mathbf{C}$ and $\mathbf{R}$ can be viewed as containing the most important columns and rows of the original data, respectively.  CUR matrix approximation has been successfully used for feature selection in applications such as document clustering \cite{mahoney_drineas_2009}, gene expression data clustering \cite{mahoney_drineas_2009, sorenson_embree_2016}, image classification \cite{liu_shao_2010}, and sensor selection and channel assignment \cite{esmaeili_etal_2021}.  CUR has also been used for simultaneous feature selection and sample selection for active learning \cite{li_etal_2019}. 

Hamm and Huang \cite{hamm_huang1_2020} provided a history of CUR and mentioned that recent work on the CUR approximation most likely began with developments in the mid-to-late 1990s by Goreinov, Tyrtyshnikov, and Zamarashkin, e.g., \cite{goreinov_etal_1997}.  Since then, several CUR algorithms have been developed; some are randomized, e.g., \cite{drineas_etal_2006, drineas_etal_2008, mahoney_drineas_2009}, and others are deterministic, e.g., \cite{sorenson_embree_2016, stewart_1999}.  Work on CUR includes proving accuracy and/or other theoretical guarantees for algorithms, e.g., \cite{drineas_etal_2006, mahoney_drineas_2009}, and also performance of CUR algorithms in practice without theoretical guarantees, e.g., \cite{mairal_etal_2011}.  In this work, we are particularly interested in deterministic CUR algorithms that can independently select columns of $\mathbf{X}$ without simultaneous selection of its rows due to the fact that (1) several applications exist that seek important matrix columns or rows and not a full matrix factorization \cite{sorenson_embree_2016}, and (2) for a practical application, a randomized CUR will likely produce a different set of important columns and/or rows in each run of the algorithm, which may not be desirable to the scientist \cite{bien_xu_mahoney_2010, mairal_etal_2011}.  

One deterministic approach to computing a CUR approximation is to select columns and rows of $\mathbf{X}$ for inclusion in $\mathbf{C}$ and $\mathbf{R}$ using convex optimization with regularization, e.g., \cite{bien_xu_mahoney_2010, mairal_etal_2011}.  In this work, we propose a novel CUR algorithm utilizing convex optimization with contributions in the formulation, implementation, and application of CUR. The main contributions of the work include (1) a novel convex optimization formulation for CUR, (2) an algorithm utilizing convex optimization that solves for $\mathbf{C}$ and $\mathbf{R}$ separately and allows the user to select $c$ and $r$, and (3) an implementation utilizing the ``surrogate functional'' technique of Daubechies, Defrise, and De Mol \cite{daubechies_etal_2004}, which we adapt for use with a new penalty function.  We also note that our CUR algorithm and implementation can accommodate a variety of penalty functions, allowing the user a flexible framework.  We provide numerical results that compare our CUR algorithm with the SVD and other deterministic CUR algorithms that select $\mathbf{C}$ and $\mathbf{R}$ separately and allow the user to select $c$ and $r$.  Specifically, we show that our CUR algorithm performs very well as a feature selection method in an extension of an experiment by Sorenson and Embree \cite{sorenson_embree_2016} on gene expression data, in which important genes are selected to cluster patients into two classes - those with and without a lung tumor.

Another main contribution of the work is a novel application of CUR for feature selection. We adapt the experiments of Higuera et al. \cite{higuera_et_al_2015} in which Self-Organizing Maps (SOMs)\footnote{also known as Kohonen Maps.} and the Wilcoxon rank-sum test were used to determine proteins that critically affect learning in wild type and trisomic (Down syndrome) mice.  Specifically, we use CUR as the feature selection method instead of the Wilcoxon rank-sum test.  We show that CUR can be used effectively in this application and compare the performance of our CUR algorithm to that of other deterministic CUR algorithms that select $\mathbf{C}$ and $\mathbf{R}$ separately and allow the user to select $c$ and $r$.  This is not only a novel application of CUR, but to the best of our knowledge, also the first use of CUR on protein expression data. 

The remainder of this paper is organized as follows: we present related work in Section \ref{sec:cur_rel_work}, our novel CUR algorithm utilizing convex optimization in Section \ref{sec:cur_alg}, the theoretical foundations of the algorithm in Section \ref{sec:cur_thms}, numerical experiments in Section \ref{sec:exp}, a novel application of CUR as a feature selection method in protein expression discriminant analysis in Section \ref{sec:pro_exp}, and a conclusion in Section \ref{sec:cur_conc}.  Throughout this work we use MATLAB notation to denote rows and columns of matrices, e.g., row $i$ of $\mathbf{X}$ is denoted $\mathbf{X}(i,:)$ and column $j$ of $\mathbf{X}$ is denoted $\mathbf{X}(:,j)$.  In addition, the set $\{1, 2, ..., n \}$ is denoted $[n]$. 

\section{Related Work}
\label{sec:cur_rel_work}

As mentioned in Section \ref{sec:intro}, work on the CUR matrix approximation has generally focused on algorithm development, theoretical guarantees, and applications. In this section, we focus on related work in three areas: (1) deterministic CUR algorithms that solve for $\mathbf{C}$ and $\mathbf{R}$ separately and allow the user to select $c$ and $r$, (2) CUR algorithms that use convex optimization with regularization to select columns and rows of the data matrix for inclusion in $\mathbf{C}$ and $\mathbf{R}$, and (3) CUR feature selection applications.  Since we mentioned a number of feature selection applications in the introduction, here we will provide more details.  For the interested reader, Dong and Martinsson \cite{dong_martinsson_2023} provide a survey of CUR algorithms (including those that do not fit the criterion for inclusion in this section). 

Deterministic CUR algorithms that solve for $\mathbf{C}$ and $\mathbf{R}$ separately and allow the user to select $c$ and $r$ include a leverage score approach \cite{mahoney_drineas_2009}, a discrete empirical interpolation method (DEIM) approach \cite{sorenson_embree_2016}, and a pivoted QR approach \cite{stewart_1999}.  The leverage score approach by Mahoney and Drineas \cite{mahoney_drineas_2009} is often compared to in the CUR literature. This approximation is randomized and columns and rows are sampled based on their ``normalized statistical leverage scores'', which capture information on how much a column or row contributes to the optimal low-rank approximation to the data matrix, the rank-$k$ SVD, where $k$ is a rank parameter chosen by the user.  However, a deterministic variant of this algorithm is to select the columns and rows with the largest leverage scores for inclusion in $\mathbf{C}$ and $\mathbf{R}$, respectively.  In the DEIM approach by Sorenson and Embree \cite{sorenson_embree_2016}, columns are chosen for inclusion in $\mathbf{C}$ and rows are chosen for inclusion in $\mathbf{R}$ using the discrete empirical interpolation method (DEIM) on the top-$k$ right and left singular vectors of the data matrix, respectively, where $k=c=r$.  In the pivoted QR approach by Stewart \cite{stewart_1999}, columns are selected for inclusion in $\mathbf{C}$ and rows are selected for inclusion in $\mathbf{R}$ using a pivoted QR factorization of $\mathbf{X}$ and $\mathbf{X}^T$, respectively. Sorenson and Embree \cite{sorenson_embree_2016} present a slight adaptation of this approach in which rows are selected for inclusion in $\mathbf{R}$ using a pivoted QR factorization of $\mathbf{C}^T$.  In each of these four CUR algorithms, $\mathbf{U}$ is computed as $\mathbf{U = C^+XR^+}$, i.e., the minimizer of $\|\mathbf{X} - \mathbf{CUR} \|_F$ \cite{stewart_1999}. 

Since the CUR algorithm that we present in this work uses convex optimization with regularization to select columns and rows of the data matrix for inclusion in $\mathbf{C}$ and $\mathbf{R}$, we also note related work in this area. In \cite{bien_xu_mahoney_2010} Bien, Xu, and Mahoney related CUR to sparse PCA and used the following convex minimization problem to find $\mathbf{C}$:
\begin{equation}
\label{eqn:c_conopt}
\mathbf{B}^* = \argmin_{\mathbf{B} \in \mathbb{R}^{n \times n}} \|\mathbf{X} - \mathbf{XB}\|_F + \lambda \sum_{i=1}^n \| \mathbf{B}(i,:) \|_2,
\end{equation}
where $\lambda>0$ is a regularization parameter. The indices of nonzero rows of $\mathbf{B}^*$ are the indices of columns to choose from $\mathbf{X}$ for inclusion in $\mathbf{C}$. $\mathbf{R}$ can be found using a similar optimization problem; however, the computation of $\mathbf{U}$ is not discussed. Mairal et al. \cite{mairal_etal_2011} formulated CUR as a convex optimization problem that selects columns and rows of $\mathbf{X}$ at the same time:
\begin{equation}
\label{eqn:cur_sim}    
\mathbf{W^*} = \argmin_{\mathbf{W} \in \mathbb{R}^{n \times m}} ||\mathbf{X-XWX}||_F^2 + \lambda_{\text{row}} \sum_{i=1}^m \| \mathbf{W}(:,i) \|_{\infty} + \lambda_{\text{col}} \sum_{j=1}^n \| \mathbf{W}(j,:) \|_{\infty},
\end{equation}
where $\lambda_{\text{row}}, \lambda_{\text{col}}>0$ are regularization parameters.  Similar to \cite{bien_xu_mahoney_2010}, the nonzero row indices of $\mathbf{W}^*$ are the indices of columns to select from $\mathbf{X}$, and the nonzero column indices of $\mathbf{W}^*$ are the indices of rows to select from $\mathbf{X}$. $\mathbf{U}$ is computed as $\mathbf{U = C^+XR^+}$.  In both \cite{bien_xu_mahoney_2010, mairal_etal_2011}, the convex optimization CUR algorithm achieves similar matrix reconstruction accuracy to that of the leverage score based CUR \cite{mahoney_drineas_2009} in numerical experiments.  In addition, Ida et al. \cite{ida_etal_2020} presented a method to speed up the coordinate descent algorithm for solving Equation \ref{eqn:c_conopt} as presented in \cite{bien_xu_mahoney_2010} and claimed it can be extended to solve Equation \ref{eqn:cur_sim} as well.  

In \cite{peng_etal_2018}, Peng et al. used an optimization problem with regularization terms that simultaneously performed a CUR approximation of a network node attribute matrix (to choose representative nodes and attributes at the same time) and residual analysis in order to detect anomalies on attributed networks.  The part of the optimization formulation related to CUR is similar to Equation \ref{eqn:cur_sim}, but uses the $\ell_2$ norm rather than the $\ell_{\infty}$ norm in the regularization terms.  This optimization problem was solved using alternating convex optimizations, and parameters were chosen using a grid search in experimental results.  In each of the convex optimization CUR approaches mentioned above \cite{bien_xu_mahoney_2010, mairal_etal_2011, ida_etal_2020, peng_etal_2018} there is not a built-in algorithmic control for selecting $c$ columns and $r$ rows of the data matrix. 

Li et al. \cite{li_etal_2019} used a convex optimization CUR to simultaneously select features and representative data samples in order to perform feature selection and active learning at the same time.  The optimization problem used is similar to Equation \ref{eqn:cur_sim} except that the regularization terms use the $\ell_2$ norm rather than the $\ell_{\infty}$ norm, and there is an additional regularization term that provides ``local linear reconstruction''.  Parameters were grid searched in experimental results and after the optimization problem is solved, the indices of the $c$ rows of $\mathbf{W}^*$ with the largest $\ell_{2}$ norms are the indices of columns selected from the data matrix for inclusion in $\mathbf{C}$.  The indices of $r$ rows to include in $\mathbf{R}$ are found similarly.  

We provided examples of applications that use CUR for feature selection in Section \ref{sec:intro}, and here provide more details for those examples.  Mahoney and Drineas \cite{mahoney_drineas_2009} applied CUR to a term-document matrix and used the results to cluster the documents into two topics, with interpretation provided by selection of the five most important terms by CUR.  The clustering provided by CUR outperformed that provided by the truncated SVD.  They also similarly applied CUR to gene expression data to cluster patients by cancer type.  Clustering peformance was equivalent to that of using the truncated SVD, but CUR provided insight into which genes are most important to the clustering and of the twelve selected, some are known to be medically associated with cancer.  Sorenson and Embree \cite{sorenson_embree_2016} also used CUR on gene expression data to discover genes that cluster patients into those with and without a tumor.  They compared results using their DEIM CUR and the deterministic leverage score CUR of Mahoney and Drineas \cite{mahoney_drineas_2009}.  While the DEIM CUR reconstructed the original data matrix better than the deterministic leverage score CUR, the genes selected by the leverage score CUR performed much better in separating patients with and without a tumor.  

Liu and Shao \cite{liu_shao_2010} leveraged CUR for feature selection to improve image classification accuracy.  CUR performed the best of the dimensionality reduction methods used, which included PCA. Esmaeili et al. \cite{esmaeili_etal_2021} used CUR for cognitive radio sensor selection and channel assignment.  Specifically, sensors were chosen using the selected columns from $\mathbf{C}$, channels were selected using the highest magnitude elements of $\mathbf{U}$ for each chosen sensor, and the resulting samples were interpolated to create the spectrum map.  They tested various CUR algorithms including the leverage score CUR \cite{mahoney_drineas_2009} and showed that CUR is more effective than random uniform sampling (the prior method) in recreating the spectrum map.  As mentioned above, Li et al. \cite{li_etal_2019} used CUR for simultaneous feature selection and sample selection for active learning to classify synthetic data, gene expression data, molecular data, image and video data, and human activity recognition data.  They demonstrated that their convex optimization CUR almost always outperformed other feature selection and active learning methods, including the randomized leverage score CUR \cite{mahoney_drineas_2009}, in terms of classifier accuracy.

\section{CUR Algorithm}
\label{sec:cur_alg}

Let $\mathbf{X} \in \mathbb{R}^{m \times n}$ be the matrix we wish to approximate as $\mathbf{X} = \mathbf{CUR}$. Our formulation of CUR using convex optimization builds upon ideas from  \cite{bien_xu_mahoney_2010}, \cite{mahoney_drineas_2009}, and \cite{mairal_etal_2011}. To select a subset of columns from $\mathbf{X}$ to form the matrix $\mathbf{C}$, we solve 
\begin{equation}
\label{eqn:C}
\mathbf{W^*} = \argmin_{\mathbf{W} \in \mathbb{R}^{n \times m}} ||\mathbf{X-XWX}||_F^2 + \lambda_C \sum_{i=1}^n \| \mathbf{W}(i,:) \|_{\infty},
\end{equation}
for a given $\lambda_C \in \mathbb{R} \geq 0$.  Then $\mathbf{C} = \mathbf{X}(:, I_C)$, where $I_C$ is the set of indices of nonzero rows in $\mathbf{W^*}$.  Hence $\lambda_C$ controls how many columns are selected from $\mathbf{X}$. i.e., the larger the value of $\lambda_C$, the more rows of $\mathbf{W}$ are forced to $\mathbf{0}$, and the fewer columns of $\mathbf{X}$ are selected.

After $\mathbf{C}$ has been calculated, we select a set of rows from $\mathbf{X}$ to form the matrix $\mathbf{R}$ by solving   
\begin{equation}
\label{eqn:R}
\mathbf{W^*} = \argmin_{\mathbf{W} \in \mathbb{R}^{c \times m}} ||\mathbf{X-CWX}||_F^2 + \lambda_R \sum_{j=1}^m \| \mathbf{W}(:,j) \|_{\infty},
\end{equation}
for a given $\lambda_R \in \mathbb{R} \geq 0$.  Then $\mathbf{R} = \mathbf{X}(I_R,:)$, where $I_R$ is the set of indices of nonzero columns in $\mathbf{W^*}$, and $\lambda_R$ controls the number of rows of selected.  

Input to our algorithm includes $c$ and $r$, the number of columns and rows to be selected from $\mathbf{X}$ for $\mathbf{C}$ and $\mathbf{R}$, respectively.  For a given $\lambda_C$, it is unknown in advance how many columns will be selected from $\mathbf{X}$ by the solution of Equation \ref{eqn:C}.  Hence, we utilize bisection on $\lambda_C$ with multiple iterates of the column selection procedure to find a selection of exactly $c$ columns.  We use a similar process with $\lambda_R$ and the row selection procedure to find a selection of exactly $r$ rows. To complete the algorithm, $\mathbf{U}$ is computed as $\mathbf{U = C^+XR^+}$, where $\mathbf{X}^+$ denotes the Moore-Penrose generalized inverse or pseudoinverse of $\mathbf{X}$. The pseudocode for our CUR approximation is given in Algorithm \ref{alg:cur_co}.

The initial minimum value for $\lambda_C$ in the bisection method is $0$, which corresponds to potentially all columns of $\mathbf{X}$ being selected for the matrix $\mathbf{C}$.  The initial maximum value for $\lambda_C$ in the bisection method is the smallest value of $\lambda_C$ that forces zero columns of $\mathbf{X}$ to be selected, i.e., the solution to Equation \ref{eqn:C} to be $\mathbf{W} = \mathbf{0}$.  We call this the critical value of $\lambda_C$ and denote it $\lambda_C^*$.  The range of values for $\lambda_R$ in the bisection method with Equation \ref{eqn:R} is set similarly, with the critical value of $\lambda_R$ denoted $\lambda_R^*$.  To prove the exact values of $\lambda_C^*$ and $\lambda_R^*$, we first provide a helpful lemma and note that Equations \ref{eqn:C} and \ref{eqn:R} can be reshaped as
\begin{equation}
\label{eqn:C_reshape}
\mathbf{w^*} = \min_{\mathbf{w} \in \mathbb{R}^{mn}} ||\mathbf{(X^T \otimes X) w}-\mathbf{b}||^2_2 + \lambda_C \sum_{i=1}^n \max_{1 \leq j \leq m} |\mathbf{w}_{i+(j-1)n}|,
\end{equation}
and 
\begin{equation}
\label{eqn:R_reshape}
\mathbf{w^*} = \min_{\mathbf{w} \in \mathbb{R}^{mc}} ||\mathbf{(\mathbf{X}^T \otimes \mathbf{C})w}-\mathbf{b}||^2_2 + \lambda_R \sum_{j=1}^m \max_{1 \leq i \leq c} |\mathbf{w}_{i+(j-1)n}|,
\end{equation}
where $\otimes$ denotes the Kronecker product, $\mathbf{b} = \text{vec}(\mathbf{X}) \in \mathbb{R}^{mn}$, i.e., a column-stacked version of $\mathbf{X}$, $\mathbf{w} = \text{vec}(\mathbf{W}) \in \mathbb{R}^{mn}$ (Equation \ref{eqn:C_reshape}) or $\mathbf{w} = \text{vec}(\mathbf{W}) \in \mathbb{R}^{mc}$ (Equation \ref{eqn:R_reshape}), and $\mathbf{w}^*$ is defined similarly to $\mathbf{w}$. 

\begin{lemma}
\label{lemma:rx}
Let $\mathbf{v} \in \mathbb{R}^{m}$ be fixed, and $\lambda \in \mathbb{R}$.  If $\langle \mathbf{v},\mathbf{x} \rangle \leq \frac{\lambda}{2}||\mathbf{x}||_\infty$ $\forall \mathbf{x} \in \mathbb{R}^{m}$, then $\lambda \geq 2 ||\mathbf{v}||_1$.
\end{lemma}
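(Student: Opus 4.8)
The plan is to exploit the fact that the hypothesis is a quantified-over-all-$\mathbf{x}$ statement, so I am free to substitute any single convenient vector and extract a concrete inequality. The natural choice is the vector that witnesses the duality between the $\ell_\infty$ norm and the $\ell_1$ norm, namely the sign vector of $\mathbf{v}$. First I would define $\mathbf{x}$ componentwise by $\mathbf{x}_i = \operatorname{sign}(\mathbf{v}_i)$ for each $i \in [m]$, taking $\operatorname{sign}(0) = 0$. With this choice, $\langle \mathbf{v}, \mathbf{x} \rangle = \sum_{i=1}^m \mathbf{v}_i \operatorname{sign}(\mathbf{v}_i) = \sum_{i=1}^m |\mathbf{v}_i| = \|\mathbf{v}\|_1$, while $\|\mathbf{x}\|_\infty = \max_i |\mathbf{x}_i| = 1$ provided $\mathbf{v} \neq \mathbf{0}$, since at least one coordinate then has $|\mathbf{x}_i| = 1$.

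Substituting this $\mathbf{x}$ into the hypothesis $\langle \mathbf{v}, \mathbf{x} \rangle \leq \tfrac{\lambda}{2}\|\mathbf{x}\|_\infty$ immediately yields $\|\mathbf{v}\|_1 \leq \tfrac{\lambda}{2} \cdot 1 = \tfrac{\lambda}{2}$, and rearranging gives $\lambda \geq 2\|\mathbf{v}\|_1$, which is the claim. Conceptually this is just the statement that the $\ell_1$ norm is the dual norm of the $\ell_\infty$ norm, i.e. $\sup_{\mathbf{x} \neq \mathbf{0}} \langle \mathbf{v}, \mathbf{x}\rangle / \|\mathbf{x}\|_\infty = \|\mathbf{v}\|_1$, so the hypothesis forces $\tfrac{\lambda}{2}$ to dominate this supremum; the sign vector is simply the maximizer that attains it.

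The only case requiring a separate word is the degenerate one $\mathbf{v} = \mathbf{0}$, where the sign vector is identically zero and $\|\mathbf{x}\|_\infty = 0$ gives no information. Here $\|\mathbf{v}\|_1 = 0$, so the conclusion reduces to $\lambda \geq 0$, which I would obtain by applying the hypothesis to any fixed nonzero vector (say $\mathbf{x}$ a standard basis vector): then $0 = \langle \mathbf{v}, \mathbf{x}\rangle \leq \tfrac{\lambda}{2}\|\mathbf{x}\|_\infty$ with $\|\mathbf{x}\|_\infty > 0$ forces $\lambda \geq 0$.

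Honestly there is no real obstacle in this lemma; the single genuine idea is identifying the sign vector as the correct test input, and the remainder is a one-line substitution. If anything, the step most likely to trip up a careless write-up is the $\mathbf{v} = \mathbf{0}$ edge case, since the canonical witness vector collapses there and a different test vector must be supplied to recover the (trivial) bound.
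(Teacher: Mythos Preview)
Your proof is correct and follows essentially the same approach as the paper: both plug in the sign vector $\mathbf{x}_i = \operatorname{sign}(\mathbf{v}_i)$ to realize the $\ell_1$--$\ell_\infty$ duality and extract the bound. Your write-up is in fact slightly more careful, since you explicitly address the degenerate case $\mathbf{v} = \mathbf{0}$, which the paper's proof passes over.
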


\begin{proof}
\[
\langle \mathbf{v},\mathbf{x} \rangle = \sum_{k=1}^m \mathbf{v}_k \mathbf{x}_k \leq \left( \sum_{k=1}^m |\mathbf{v}_k| \right) \max_{1 \leq k \leq m} |\mathbf{x}_k| = ||\mathbf{v}||_1 ||\mathbf{x}||_\infty.
\]
Hence for $\mathbf{x}_k = \text{sign}(\mathbf{v}_k)$, $\langle \mathbf{v},\mathbf{x} \rangle = ||\mathbf{v}||_1 ||\mathbf{x}||_\infty$ and $\lambda = 2||\mathbf{v}||_1$.  Since the smallest value of $\lambda$ which holds $\forall \mathbf{x} \in \mathbb{R}^{m}$ will occur when $\langle \mathbf{v},\mathbf{x} \rangle = \frac{\lambda}{2}||\mathbf{x}||_\infty$, it is the case that $\forall \mathbf{x} \in \mathbb{R}^{m}$, $\lambda \geq 2 ||\mathbf{v}||_1$.
\end{proof}

\begin{theorem}
Let $\mathbf{M} = \text{\emph{reshape}}(\mathbf{(X^T \otimes X)^Tb}, n, m)$, i.e., $\mathbf{(X^T \otimes X)^Tb}$ reshaped from $\mathbb{R}^{mn}$ to $\mathbb{R}^{n \times m}$ so that $\mathbf{(X^T \otimes X)^Tb} = \text{vec}(\mathbf{M})$.  Then 
\[
\lambda_C^* = 2 \max_{1 \leq i \leq n} \| \mathbf{M}(i,:) \|_1 = 2||\mathbf{M}||_\infty.
\]
Similarly, let $\mathbf{N} = \text{\emph{reshape}}(\mathbf{(X^T \otimes C)^Tb}, c, m)$.  Then 
\[
\lambda_R^* = 2 \max_{1 \leq j \leq m} \| \mathbf{N}(:,j) \|_1 = 2||\mathbf{N}||_1. 
\] 
\end{theorem}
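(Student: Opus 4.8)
The plan is to characterize exactly those $\lambda_C$ for which $\mathbf{w}=\mathbf{0}$ (equivalently $\mathbf{W}=\mathbf{0}$) solves Equation \ref{eqn:C_reshape}, and then take the infimum of that set, since by definition $\lambda_C^*$ is the smallest $\lambda_C$ forcing the zero solution. Writing $A=\mathbf{X}^T\otimes\mathbf{X}$, $g(\mathbf{w})=\sum_{i=1}^n\max_{1\le j\le m}|\mathbf{w}_{i+(j-1)n}|$, and $f(\mathbf{w})=\|A\mathbf{w}-\mathbf{b}\|_2^2+\lambda_C\,g(\mathbf{w})$, I first observe that $f$ is convex, being a convex quadratic plus a nonnegative multiple of a norm. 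Hence $\mathbf{0}$ is a global minimizer if and only if every one-sided directional derivative of $f$ at $\mathbf{0}$ is nonnegative.

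First I would compute that directional derivative. Along a ray $t\mathbf{w}$ with $t\ge 0$, positive homogeneity of $g$ gives
\[
f(t\mathbf{w})-f(\mathbf{0})=t^2\|A\mathbf{w}\|_2^2-2t\langle\mathbf{w},A^T\mathbf{b}\rangle+\lambda_C\, t\, g(\mathbf{w}),
\]
so the quadratic term is $O(t^2)$ and drops out at first order, leaving the optimality condition $\lambda_C\, g(\mathbf{w})-2\langle\mathbf{w},A^T\mathbf{b}\rangle\ge 0$ for all $\mathbf{w}$. Next I would recast this in matrix form: with $\mathbf{W}=\mathrm{reshape}(\mathbf{w},n,m)$ and $A^T\mathbf{b}=\mathrm{vec}(\mathbf{M})$, both terms separate over the rows of $\mathbf{W}$, namely $g(\mathbf{w})=\sum_{i=1}^n\|\mathbf{W}(i,:)\|_\infty$ and $\langle\mathbf{w},A^T\mathbf{b}\rangle=\sum_{i=1}^n\langle\mathbf{W}(i,:),\mathbf{M}(i,:)\rangle$. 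Because the rows of $\mathbf{W}$ can be chosen independently (set all but one row to zero), the global condition holds if and only if, for each $i$,
\[
\langle\mathbf{M}(i,:),\mathbf{x}\rangle\le\tfrac{\lambda_C}{2}\|\mathbf{x}\|_\infty\qquad\text{for all }\mathbf{x}\in\mathbb{R}^m.
\]

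At this point Lemma \ref{lemma:rx}, applied with $\mathbf{v}=\mathbf{M}(i,:)$, shows that this per-row condition forces $\lambda_C\ge 2\|\mathbf{M}(i,:)\|_1$, while the H\"older bound appearing in the first line of that lemma's proof supplies the converse; thus the condition for row $i$ is exactly $\lambda_C\ge 2\|\mathbf{M}(i,:)\|_1$. Imposing this for all $i$ and taking the smallest admissible value yields $\lambda_C^*=2\max_{1\le i\le n}\|\mathbf{M}(i,:)\|_1$, which is precisely $2\|\mathbf{M}\|_\infty$. The argument for $\lambda_R^*$ is identical after replacing $A$ by $\mathbf{X}^T\otimes\mathbf{C}$; the only structural change is that the penalty in Equation \ref{eqn:R_reshape} groups the \emph{columns} of $\mathbf{W}$ rather than its rows, so the separation and the lemma are applied column-by-column to $\mathbf{N}$, giving $\lambda_R^*=2\max_{1\le j\le m}\|\mathbf{N}(:,j)\|_1=2\|\mathbf{N}\|_1$.

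The step I expect to be the main obstacle is justifying cleanly that the first-order directional-derivative condition at $\mathbf{0}$ is both necessary and sufficient for global optimality and that it decouples across the groups. Convexity supplies sufficiency, and the positive homogeneity of $g$ together with the vanishing of the quadratic term at first order is what removes the coupling introduced by $A$. The remaining delicacy is the bidirectional use of Lemma \ref{lemma:rx}: the lemma as stated only gives the necessary direction, so the sufficiency must be recovered from the H\"older inequality inside its proof.
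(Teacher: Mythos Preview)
Your proposal is correct and follows essentially the same route as the paper: both expand the objective at $\mathbf{w}$, isolate the condition $\lambda_C\,g(\mathbf{w})\ge 2\langle\mathbf{w},A^T\mathbf{b}\rangle$, decouple it row-by-row, and invoke Lemma~\ref{lemma:rx} together with H\"older to pin down $\lambda_C^*=2\|\mathbf{M}\|_\infty$. Your framing via convexity and one-sided directional derivatives makes the necessity direction (that no smaller $\lambda_C$ works) more explicit than the paper's argument, which emphasizes the sufficiency direction $J(\mathbf{w})\ge J(\mathbf{0})$ and leaves necessity implicit in the equality case of the lemma.
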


\begin{proof}
Let $\mathbf{A} = \mathbf{(X^T \otimes X)}$ and the objective function of Equation \ref{eqn:C_reshape} be $J(\mathbf{w})$:
\[
J(\mathbf{w}) = ||\mathbf{Aw}-\mathbf{b}||^2_2 + \lambda_C \sum_{i=1}^n \max_{1 \leq j \leq m} |\mathbf{w}_{i+(j-1)n}|.
\]
We want to find the smallest $\lambda_C^* > 0$ such that $\forall \lambda \geq \lambda_C^*$, $\argmin_{\mathbf{w} \in \mathbb{R}^{mn}} J(\mathbf{w}) = \mathbf{0}$.  We have

\begin{equation*}
\begin{split}
J(\mathbf{w}) & = ||\mathbf{Aw}||^2_2 - 2\mathbf{w^TA^Tb} + ||\mathbf{b}||^2_2 + \lambda_C \sum_{i=1}^n \max_{1 \leq j \leq m} |\mathbf{w}_{i+(j-1)n}| \\
& = ||\mathbf{Aw}||^2_2 + ||\mathbf{b}||^2_2  + \lambda_C \sum_{i=1}^n \max_{1 \leq j \leq m} |\mathbf{w}_{i+(j-1)n}| - 2 \sum_{k=1}^{mn} (\mathbf{A^Tb})_k \mathbf{w}_k \\
& = ||\mathbf{Aw}||^2_2 + ||\mathbf{b}||^2_2  + \lambda_C \sum_{i=1}^n \max_{1 \leq j \leq m} |\mathbf{w}_{i+(j-1)n}| - 2 \sum_{i=1}^{n} \sum_{j=1}^{m} \mathbf{M}_{ij} \mathbf{W}_{ij}, \\
\end{split}
\end{equation*}
where $\mathbf{W} = \text{reshape}(\mathbf{w}, n, m)$.  If $\forall i$, 
\begin{equation}
\label{eqn:crit_lambda_lemma}
\frac{\lambda_C}{2} \max_{1 \leq j \leq m} |\mathbf{w}_{i+(j-1)n}| - \sum_{j=1}^{m} \mathbf{M}_{ij} \mathbf{W}_{ij} \geq 0,
\end{equation}
then 
\[
\sum_{i=1}^n \left[ \frac{\lambda_C}{2} \max_{1 \leq j \leq m} |\mathbf{w}_{i+(j-1)n}| - \sum_{j=1}^{m} \mathbf{M}_{ij} \mathbf{W}_{ij} \right] \geq 0, 
\]
and 
\[
\lambda_C \sum_{i=1}^n \max_{1 \leq j \leq m} |\mathbf{w}_{i+(j-1)n}| - 2 \sum_{i=1}^{n} \sum_{j=1}^{m} \mathbf{M}_{ij} \mathbf{W}_{ij} \geq 0.
\]
Hence, $J(\mathbf{w}) \geq J(\mathbf{0}) = ||\mathbf{b}||^2_2$ for all $\mathbf{w}$, thus $\argmin_{\mathbf{w} \in \mathbb{R}^{mn}} J(\mathbf{w}) = \mathbf{0}$.  By Lemma \ref{lemma:rx}, assuming Equation \ref{eqn:crit_lambda_lemma} is true, $\forall i$, $\lambda_C \geq 2||\mathbf{M}(i,:)||_1$.  Thus, $\lambda_C^* = 2 \max_{1 \leq i \leq n} \| \mathbf{M}(i,:) \|_1 = 2||\mathbf{M}||_\infty$.   

A similar proof can be used to show the result for $\lambda_R^*$, letting $\mathbf{A} = \mathbf{(X^T \otimes C)}$ and $J(\mathbf{w})$ be the objective function of Equation \ref{eqn:R_reshape}:
\[
J(\mathbf{w}) = ||\mathbf{Aw}-\mathbf{b}||^2_2 + \lambda_R \sum_{j=1}^m \max_{1 \leq i \leq c} |\mathbf{w}_{i+(j-1)n}|.
\]
\end{proof}

\begin{algorithm}
\caption{CUR through Convex Optimization}
\begin{algorithmic}[1]
\Require $\mathbf{X} \in \mathbb{R}^{m \times n}$, $c > 0$, $r > 0$
\Ensure $\mathbf{C} \in \mathbb{R}^{m \times c}$, $\mathbf{U} \in \mathbb{R}^{c \times r}$, $\mathbf{R} \in \mathbb{R}^{r \times n}$ such that $\mathbf{X \approx CUR}$
\State $\lambda_C^* = \| \mathbf{X}^T \mathbf{X} \mathbf{X}^T  \|_{\infty}$
\State $n_c = 0$, $\lambda_{\text{min}} = 0$, $\lambda_{\text{max}} = \lambda_C^*$
\While {$n_c \neq c$}
    \State $\lambda_{C} = (\lambda_{\text{max}} + \lambda_{\text{min}})/2$
    \State solve $\mathbf{W^*} = \argmin_{\mathbf{W} \in \mathbb{R}^{n \times m}} ||\mathbf{X-XWX}||_F^2 + \lambda_C \sum_{i=1}^n  \| \mathbf{W}(i,:) \|_{\infty}$
    \State let $I_C$ be the set of indices of nonzero rows of $\mathbf{W^*}$
    \State $n_c = |I_C|$
    \If {$c > n_c$}
        \State $\lambda_{\text{min}} = \lambda_C$
    \ElsIf {$c < n_c$}
        \State $\lambda_{\text{max}} = \lambda_C$
    \EndIf
\EndWhile
\State $\mathbf{C} = \mathbf{X}(:,I_C)$
\State $\lambda_R^* = \| \mathbf{C}^T \mathbf{X} \mathbf{X}^T  \|_{1}$
\State $n_r = 0$, $\lambda_{\text{min}} = 0$, $\lambda_{\text{max}} = \lambda_R^*$
\While {$n_r \neq r$}
    \State $\lambda_{R} = (\lambda_{\text{max}} + \lambda_{\text{min}})/2$
    \State solve $\mathbf{W^*} = \argmin_{\mathbf{W} \in \mathbb{R}^{c \times m}} ||\mathbf{X-CWX}||_F^2 + \lambda_R \sum_{j=1}^m \| \mathbf{W}(:,j) \|_{\infty}$
    \State let $I_R$ be the set of indices of nonzero columns of $\mathbf{W^*}$
    \State $n_r = |I_R|$
    \If {$r > n_r$}
        \State $\lambda_{\text{min}} = \lambda_R$
    \ElsIf {$r < n_r$}
        \State $\lambda_{\text{max}} = \lambda_R$
    \EndIf
\EndWhile
\State $\mathbf{R} = \mathbf{X}(I_R,:)$
\State $\mathbf{U = C^+XR^+}$
\State \Return $\mathbf{C,U,R}$
\end{algorithmic}
\label{alg:cur_co}
\end{algorithm}

\subsection{Implementation for Minimization Problems}
\label{sec:C_min}

For $\mathbf{X}$ of small\footnote{Small is relative to the user's computer memory size.} dimensions, we can solve the minimization problems on lines 5 and 19 of Algorithm \ref{alg:cur_co} using the reshaped versions (Equations \ref{eqn:C_reshape} and \ref{eqn:R_reshape}) and a convex programming solver such as the CVX package in MATLAB \cite{cvx_web, cvx_book}. However using a solver for $\mathbf{X}$ of larger dimensions becomes infeasible due to the use of the Kronecker product in the reshaped problems. For example, to store the genetics dataset referenced in Section \ref{sec:exp}, a dense double array $\mathbf{X} \in \mathbb{R}^{\text{107} \times \text{22,283}}$, 19.07 MB are used; to store $\mathbf{X}^T \otimes \mathbf{X} \in \mathbb{R}^{2,384,281 \times 2,384,281}$, 45.48 TB are used.  

To accommodate large-scale problems we solve these minimizations in Algorithm \ref{alg:cur_co} using an extension of an iterative method by Daubechies, Defrise, and De Mol \cite{daubechies_etal_2004} that utilizes a ``surrogate functional'' to solve regularized least squares minimization problems in which weighted $\ell_p$-norm penalty functions are used, for $1 \leq p \leq 2$. This technique decouples a large minimization problem into smaller, easy-to-solve problems.  We extend the results of \cite{daubechies_etal_2004} to apply to our penalty functions, e.g., $\sum_{i=1}^n \| \mathbf{W}(i,:) \|_{\infty}$.  We demonstrate the method for the line 5 minimization in Algorithm \ref{alg:cur_co}.  The line 19 minimization is handled similarly.

Let the objective function of Equation \ref{eqn:C} be denoted by   
\begin{equation*}
\begin{split}
J(\mathbf{W}) = ||\mathbf{X-XWX}||_F^2 + \lambda_C \sum_{i=1}^n \| \mathbf{W}(i,:) \|_{\infty}, \\
\end{split}
\end{equation*}
and the corresponding surrogate functional by   
\begin{equation*}
\label{eqn:surr}
\widehat{J}(\mathbf{W}, \mathbf{Z}) = ||\mathbf{X-XWX}||_F^2 + \lambda_C \sum_{i=1}^n \| \mathbf{W}(i,:) \|_{\infty} + \mu ||\mathbf{W}-\mathbf{Z}||_F^2 - ||\mathbf{XWX}-\mathbf{XZX}||_F^2, 
\end{equation*}
where $\mathbf{Z} \in \mathbb{R}^{n \times m}$ and $\mu > 0$. For any $\mathbf{Z} \in \mathbb{R}^{n \times m}$ and $\mu > 0$ we have
\begin{equation*}
\begin{split}
\widehat{J}(\mathbf{W}, \mathbf{Z}) 
& = \mu ||\mathbf{W}||_F^2 -2\tr\{\mathbf{W}(\mu \mathbf{Z^T} + \mathbf{XX^TX} - \mathbf{XX^T Z^T X^TX})\} + \lambda_C \sum_{i=1}^n \| \mathbf{W}(i,:) \|_{\infty} \\
& \quad + ||\mathbf{X}||_F^2 +  \mu ||\mathbf{Z}||_F^2 - ||\mathbf{XZX}||_F^2 \\
& = \sum_{i=1}^n \left[\mu ||\mathbf{W}(i,:)||_2^2 - 2\langle \mathbf{W}(i,:), \mathbf{L^T}(i,:) \rangle + \lambda_C ||\mathbf{W}(i,:)||_{\infty} \right] \\
& \quad + ||\mathbf{X}||_F^2 +  \mu ||\mathbf{Z}||_F^2 - ||\mathbf{XZX}||_F^2, \\
\end{split}
\end{equation*}
where $\mathbf{L} = \mu \mathbf{Z^T} + \mathbf{XX^TX} - \mathbf{XX^T Z^T X^TX}$.  Hence,
\begin{equation*}
\begin{split}
\argmin_{\mathbf{W} \in \mathbb{R}^{n \times m}} \widehat{J}(\mathbf{W}, \mathbf{Z}) & = 
\argmin_{\mathbf{W} \in \mathbb{R}^{n \times m}} \mu \sum_{i=1}^n \left[||\mathbf{W}(i,:) - \frac{1}{\mu}\mathbf{L^T}(i,:)||^2_2 - ||\frac{1}{\mu}\mathbf{L^T}(i,:)||^2_2 + \frac{\lambda_C}{\mu}||\mathbf{W}(i,:)||_{\infty} \right] \\
& = \argmin_{\mathbf{W} \in \mathbb{R}^{n \times m}} 2\mu \sum_{i=1}^n \left[\frac{1}{2}||\mathbf{W}(i,:) - \frac{1}{\mu}\mathbf{L^T}(i,:)||^2_2 + \frac{\lambda_C}{2\mu}||\mathbf{W}(i,:)||_{\infty} \right]. 
\end{split}
\end{equation*}
Thus, we can easily minimize $\widehat{J}$ over $\mathbf{W}$ by computing the proximal operator of the $\ell_\infty$ norm, 
\begin{equation}
\label{eqn:cur_proxop}
\textbf{prox}_{\alpha ||\cdot||_\infty}(\mathbf{x}) = \argmin_{\mathbf{y} \in \mathbb{R}^m} \left(\frac{1}{2}||\mathbf{y-x}||_2^2 + \alpha ||\mathbf{y}||_\infty \right)
\end{equation}
for $\mathbf{x} \in \mathbb{R}^m$ and $\alpha \geq 0$, for each row of $\mathbf{W}$.  To find a minimizer of $J$, we utilize the minimization of $\widehat{J}$ in the iterative process in Algorithm \ref{alg:sf_iter}. In Section \ref{sec:cur_thms}, we will show that $\mathbf{W^*} = \argmin_{\mathbf{W} \in \mathbb{R}^{n \times m}} J(\mathbf{W})$, where $\mathbf{W^*}$ is the output of Algorithm \ref{alg:sf_iter}.

\begin{algorithm}
\caption{Convex Optimization Using the Surrogate Functional}
\begin{algorithmic}[1]
\Require $\mathbf{X} \in \mathbb{R}^{m \times n}, \lambda_C \geq 0, \mu > ||\mathbf{X} ||_2^4$ 
\Ensure $\mathbf{W^*} \in \mathbb{R}^{n \times m}$ s.t. $\mathbf{W^*} = \argmin_{\mathbf{W} \in \mathbb{R}^{n \times m}} ||\mathbf{X-XWX}||_F^2 + \lambda_C \sum_{i=1}^n  \|\mathbf{W}(i,:) \|_{\infty}$
\State $\mathbf{W}^0 = \mathbf{0}$
\State $k = 0$
\Repeat  \Comment each iteration solves $\mathbf{W^k} = \argmin_{\mathbf{Y} \in \mathbb{R}^{n \times m}} \widehat{J}(\mathbf{Y},\mathbf{W^{k-1}})$.
    \State $k = k+1$
    \State $\mathbf{L} =  \mu \mathbf{(W^{k-1})^T} + \mathbf{XX^TX} - \mathbf{XX^T (W^{k-1})^T X^TX}$
    \For{$i = 1 \: \text{to} \: n$}
        \State $\mathbf{W^k}(i,:) = \argmin_{\mathbf{y} \in \mathbb{R}^{m}} \left[ \frac{1}{2}||\mathbf{y} - \frac{1}{\mu}\mathbf{L^T}(i,:)||^2_2 + \frac{\lambda_C}{2\mu} ||\mathbf{y}||_{\infty} \right]$
    \EndFor
\Until {convergence of the sequence $\{ \mathbf{W^k} \}$}
\State $\mathbf{W^*} = \mathbf{W^k}$  
\Comment{$\mathbf{W^*} = \lim_{k \rightarrow \infty} \{ \mathbf{W^k} \}$}
\State \Return $\mathbf{W^*}$
\end{algorithmic}
\label{alg:sf_iter}
\end{algorithm}

\subsection{Complexity}

For this analysis we will assume that $r \ll m$ and $c \ll n$, and rename $\mu$ as $\mu_C$ to avoid confusion with $\mu_R$, a similar parameter needed to solve the line 19 minimization. We provide computational complexities in Table \ref{tab:cc} that help determine the overall complexity of Algorithm \ref{alg:cur_co}.  Quantities that are used in each iteration of one of the bisection loops should be computed once before the loop begins. For the first bisection loop on lines 3-13, this includes $\mathbf{XX^TX}$ (which is the transpose of the matrix computed to find $\lambda_C^*$ in line 1), $\mathbf{XX^T}$, $\mathbf{X^TX}$, and $\mu_C > \| \mathbf{X} \|_2^4$ which is an input to Algorithm \ref{alg:sf_iter}. For the second bisection loop on lines 17-27, this includes $\mathbf{XX^TC}$ (which is the transpose of the matrix computed to find $\lambda_R^*$ in line 15), $\mathbf{XX^T}$ (which was already computed before the first bisection loop), $\mathbf{C^TC}$, and $\mu_R > \| \mathbf{X} \|_2^2 \| \mathbf{C} \|_2^2$ (an input to the algorithm that solves the line 19 minimization), in which $\| \mathbf{X} \|_2$ was already computed as well. 

\begin{table}[h]
    \centering
    \rowcolors{2}{gray!25}{white}
    \begin{tabular}{|l|l|}
        \hline
        \textbf{Computation} & \textbf{Complexity} \\
        \hline
         $\lambda_C^*$ & $O(mn^2)$ if $m \geq n$ or $O(m^2n)$ if $m < n$  \\  
         $\mathbf{XX^TX}$ & $O(mn)$ utilizing $\lambda_C^*$ computation \\
         $\mathbf{XX^T}$ & $O(m^2n)$ \\
         $\mathbf{X^TX}$ & $O(mn^2)$ \\
         $\mu_C$ & $O(mn)$ \\
         $\lambda_R^*$ &  $O(cmn)$ if $m \geq c$ or $O(cm^2)$ (utilizing $\mathbf{XX^T}$ computation) if $m < c$  \\
         $\mathbf{XX^TC}$ & $O(cm)$ utilizing $\lambda_R^*$ computation\\
         $\mathbf{C^TC}$ & $O(c^2m)$ \\
         $\mu_R$ & $O(cm)$ \\         
        \hline
    \end{tabular}
    \caption{Computational complexities of quantities used in Algorithm \ref{alg:cur_co}.}
    \label{tab:cc}
\end{table}

Before we analyze the entirety of Algorithm \ref{alg:cur_co}, we will analyze the complexity of Algorithm \ref{alg:sf_iter}, which solves the minimization problem on line 5 of Algorithm \ref{alg:cur_co}.  In each iteration, the most expensive steps are the computation of $\mathbf{L}$ and the $n$ proximal operators.  $\mathbf{L}$ can be computed in $O(mn(m+n))$ time and each proximal operator can be computed in $O(m \log m)$ time.  Determining if the sequence has converged in line 9 can be completed in $O(mn)$ time.  Hence the total time for Algorithm \ref{alg:sf_iter}, assuming $k$ iterations are completed, is $O(kmn(m+n))$.  A similar analysis shows that the complexity of the algorithm for solving the minimization problem on line 19 of Algorithm \ref{alg:cur_co} is $O(\hat{k}cm(m+c))$, assuming $\hat{k}$ iterations are completed.  For each minimization problem, we have found that using a small maximum number of iterations in practice, e.g., 20, is sufficient for use in the CUR algorithm, Algorithm \ref{alg:cur_co}. For the remainder of this analysis, we will assume that the number of iterations for each minimization problem is a small constant.

In Algorithm \ref{alg:cur_co}, the bisection loop in lines 3-13 dominates the computational complexity.  This loop runs in $O(\ell mn(m+n))$ time, assuming $\ell$ iterations.  This is due to the computation time of Algorithm \ref{alg:sf_iter} and that finding the set $I_C$ can be completed in $O(mn)$ time.  Using a similar analysis, the second bisection loop on lines 17-27 is an order of magnitude less expensive, i.e., $O(\hat{\ell} cm(m+c))$ assuming $\hat{\ell}$ iterations. The computation of $\mathbf{U}$ involves two pseudoinverses, $\mathbf{C^+}$ and $\mathbf{R^+}$, which can be computed in $O(cm \min(c,m))$ and $O(nr \min(n,r))$ time, respectively.  The product $\mathbf{U = C^+XR^+}$ can be computed in $O(cn(m+r))$ time if $m \geq n$, or $O(mr(n+c))$ time if $m < n$.  Hence the total computational complexity for Algorithm \ref{alg:cur_co} is $O(\ell mn(m+n))$.

\subsection{Generalizations}
\label{sec:cur_gen}

To form the matrix $\mathbf{C}$, our algorithm and implementation can also accommodate objective functions of the form 
\[
||\mathbf{X-XWX}||_F^2 + \lambda_C \sum_{i=1}^n ||\mathbf{W}(i,:)||_p^p,
\]
for $1 \leq p \leq 2$.  The theory for using the surrogate functional technique with these choices of objective functions is already complete \cite{daubechies_etal_2004}, and the only change to the implementation detailed above is that the proximal operator in Algorithm \ref{alg:sf_iter} would be of the $\ell_p$-norm.  Closed form solutions for the proximal operator of the $\ell_1$ and $\ell_2$ norms exist, making these easy choices to implement.  Similar penalty function adaptations can be made to the objective function used to form the matrix $\mathbf{R}$.  Hence, our algorithm and implementation provide a CUR framework.  We also note that the objective function could be generalized as
\[
\min_{\mathbf{W} \in \mathbb{R}^{n \times m}} ||\mathbf{X-XWX}||_p^p + \lambda_C \sum_{i=1}^n ||\mathbf{W}(i,:)||_{\infty},
\]
where $p \in [1, \infty]$, which remains a potential area for future work.

\section{Theoretical Foundation}
\label{sec:cur_thms}

In this section, we follow the theoretical approach of \cite{daubechies_etal_2004} to prove the correctness of Algorithm \ref{alg:sf_iter} for solving the minimization on line 5 of Algorithm \ref{alg:cur_co}, i.e., that 
\[
\mathbf{W^*} = \argmin_{\mathbf{W} \in \mathbb{R}^{n \times m}} ||\mathbf{X-XWX}||_F^2 + \lambda_C \sum_{i=1}^n  \|\mathbf{W}(i,:) \|_{\infty},
\]
where $\mathbf{W^*}$ is the output of Algorithm \ref{alg:sf_iter}.  
The correctness of the algorithm for solving the minimization on line 19 of Algorithm \ref{alg:cur_co}, can be proved similarly.  

For constant $\mathbf{X}, \mu, \lambda_C$, let the nonlinear operator $\mathbf{T}_{\mathbf{X}, \mu, \lambda_C}: \mathbb{R}^{n \times m} \rightarrow \mathbb{R}^{n \times m}$ be defined as $\mathbf{Z} \mapsto \mathbf{T}_{\mathbf{X}, \mu, \lambda_C}(\mathbf{Z})=\mathbf{W}$ and given by:
\begin{enumerate}
    \item construct $\mathbf{L(Z)} = \mu \mathbf{Z^T} + \mathbf{XX^TX} - \mathbf{XX^TZ^TX^TX}$,
    \item for each row of $\mathbf{L^T}$ (i.e., $\forall i \in [n]$), solve 
    \[
        \mathbf{W}(i,:) = \argmin_{\mathbf{y} \in \mathbb{R}^m} \left[\frac{1}{2}||\mathbf{y}-\frac{1}{\mu}\mathbf{L^T}(i,:)||^2_2 + \frac{\lambda_C}{2\mu}||\mathbf{y}||_\infty\right],     
    \]
    and
    \item reassemble $\mathbf{W}$ from its rows,
\end{enumerate}
so that we can write each $\mathbf{W}^k$ produced in Algorithm \ref{alg:sf_iter} as $\mathbf{W}^k = \mathbf{T}^k_{\mathbf{X}, \mu, \lambda_C}(\mathbf{W}^0)$.  
The primary result that will we prove in this section is that the sequence $\{ \mathbf{W}^k \}_{k \in \mathbb{N}}$ converges to a minimizer of $||\mathbf{X-XWX}||_F^2 + \lambda_C \sum_{i=1}^n  \|\mathbf{W}(i,:) \|_{\infty}$.  This is formally stated below.

\begin{theorem}
\label{thm:alg2_correct}
Let $\lambda_C \in \mathbb{R} \geq 0$; $\mu \in \mathbb{R} > 0$; $\mathbf{X} \in \mathbb{R}^{m \times n}$; and $\mathbf{W}, \mathbf{Z} \in \mathbb{R}^{n \times m}$.  Define 
\begin{equation*}
J(\mathbf{W}) = ||\mathbf{X-XWX}||_F^2 + \lambda_C \sum_{i=1}^n \| \mathbf{W}(i,:) \|_{\infty}, 
\end{equation*}
and the nonlinear operator $\mathbf{T}_{\mathbf{X}, \mu, \lambda_C}: \mathbb{R}^{n \times m} \rightarrow \mathbb{R}^{n \times m}$ as $\mathbf{Z} \mapsto \mathbf{T}_{\mathbf{X}, \mu, \lambda_C}\mathbf{(Z)=W}$ as above.

\begin{enumerate}
    \item[a.] If $\mu > ||\mathbf{X}||_2^4$, then the sequence $\{\mathbf{W}^k = \mathbf{T}^k_{\mathbf{X}, \mu, \lambda_C}(\mathbf{W}^0)\}_{k \in \mathbb{N}}$ produced by Algorithm \ref{alg:sf_iter} converges to a fixed point of $\mathbf{T}_{\mathbf{X}, \mu, \lambda_C}$.
    \item[b.] A fixed point of $\mathbf{T}_{\mathbf{X}, \mu, \lambda_C}$ is a minimizer of $J(\mathbf{W})$.
    \item[c.] $J(\mathbf{W})$ has a unique minimizer if $\mathbf{X}$ is square and full rank.
\end{enumerate}
\end{theorem}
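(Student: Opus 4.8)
My plan is to recognize Algorithm~\ref{alg:sf_iter} as a majorize-minimize (forward-backward / proximal-gradient) scheme and to follow the structure of \cite{daubechies_etal_2004}, adapted to the mixed penalty $\Phi(\mathbf{W}) = \sum_{i=1}^n \|\mathbf{W}(i,:)\|_\infty$. Write $J_0(\mathbf{W}) = \|\mathbf{X}-\mathbf{XWX}\|_F^2$ so that $J = J_0 + \lambda_C\Phi$. Two structural facts drive everything. First, the map $\mathbf{W}\mapsto\mathbf{XWX}$ has operator norm at most $\|\mathbf{X}\|_2^2$, so $\|\mathbf{X(W-Z)X}\|_F^2 \le \|\mathbf{X}\|_2^4\|\mathbf{W-Z}\|_F^2$; hence for $\mu>\|\mathbf{X}\|_2^4$ the surrogate obeys the majorization $\widehat{J}(\mathbf{W},\mathbf{Z}) \ge J(\mathbf{W})$ with equality iff $\mathbf{W}=\mathbf{Z}$. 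Second, the completion of the square already carried out in Section~\ref{sec:C_min} shows $\mathbf{T}_{\mathbf{X},\mu,\lambda_C}(\mathbf{Z}) = \mathrm{prox}_{\frac{\lambda_C}{2\mu}\Phi}\!\big(\mathbf{Z}-\tfrac{1}{2\mu}\nabla J_0(\mathbf{Z})\big)$, a forward-backward step with step size $\tfrac{1}{2\mu}$; I would confirm this by checking $\tfrac{1}{\mu}\mathbf{L}^T = \mathbf{Z}-\tfrac{1}{2\mu}\nabla J_0(\mathbf{Z})$.

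For part~(a), I would first establish monotone decrease. Since $\mathbf{W}^{k+1}$ minimizes the strongly convex $\widehat{J}(\cdot,\mathbf{W}^k)$ and $\widehat{J}(\mathbf{W}^k,\mathbf{W}^k)=J(\mathbf{W}^k)$, combining with the majorization gives $J(\mathbf{W}^k)-J(\mathbf{W}^{k+1}) \ge (\mu-\|\mathbf{X}\|_2^4)\,\|\mathbf{W}^{k+1}-\mathbf{W}^k\|_F^2 \ge 0$. Thus $\{J(\mathbf{W}^k)\}$ is nonincreasing and bounded below by $0$, so it converges; summing the inequality yields $\sum_k\|\mathbf{W}^{k+1}-\mathbf{W}^k\|_F^2<\infty$, hence the asymptotic regularity $\|\mathbf{W}^{k+1}-\mathbf{W}^k\|_F\to0$. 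Next I would show $\mathbf{T}$ is nonexpansive: $\nabla J_0$ is Lipschitz with constant $2\|\mathbf{X}\|_2^4$, so the forward step $\mathbf{I}-\tfrac{1}{2\mu}\nabla J_0$ is nonexpansive whenever $\tfrac{1}{2\mu}\le\tfrac{1}{\|\mathbf{X}\|_2^4}$ (guaranteed by $\mu>\|\mathbf{X}\|_2^4$), while the row-wise $\ell_\infty$ prox is firmly nonexpansive, so the composition $\mathbf{T}$ is nonexpansive. Existence of a minimizer of the continuous convex $J$ (coercive when $\lambda_C>0$, and a bounded-below convex quadratic attaining its infimum when $\lambda_C=0$) yields, via the fixed-point equivalence below, a fixed point $\mathbf{W}^\star$; nonexpansiveness then forces $\{\|\mathbf{W}^k-\mathbf{W}^\star\|_F\}$ to be nonincreasing, so the orbit is bounded. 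Finally, in finite dimensions I would extract a convergent subsequence $\mathbf{W}^{k_j}\to\overline{\mathbf{W}}$, use continuity of $\mathbf{T}$ together with asymptotic regularity to identify $\overline{\mathbf{W}}$ as a fixed point, and then conclude by the standard argument for nonexpansive maps that the whole sequence converges to $\overline{\mathbf{W}}$, since $\|\mathbf{W}^k-\overline{\mathbf{W}}\|_F$ is monotone while a subsequence tends to $0$.

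For part~(b), I would use the proximal-gradient fixed-point characterization: $\mathbf{W}=\mathrm{prox}_{\frac{\lambda_C}{2\mu}\Phi}(\mathbf{W}-\tfrac{1}{2\mu}\nabla J_0(\mathbf{W}))$ is equivalent to $\mathbf{0}\in \nabla J_0(\mathbf{W})+\lambda_C\,\partial\Phi(\mathbf{W}) = \partial J(\mathbf{W})$. Since $J$ is convex, this stationarity condition is exactly global optimality, so every fixed point of $\mathbf{T}$ minimizes $J$ (and conversely, which is what I invoked for existence above). Equivalently, one checks that the extra surrogate terms $\mu\|\mathbf{W}-\mathbf{Z}\|_F^2-\|\mathbf{X(W-Z)X}\|_F^2$ have vanishing gradient at $\mathbf{Z}=\mathbf{W}$, so the optimality condition for $\widehat{J}(\cdot,\mathbf{W})$ at $\mathbf{W}$ coincides with that for $J$. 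For part~(c), when $\mathbf{X}$ is square and invertible the linear map $\mathbf{W}\mapsto\mathbf{XWX}$ is injective, so $J_0$ is strictly convex; adding the convex $\lambda_C\Phi$ preserves strict convexity, and a strictly convex function has at most one minimizer, which together with existence gives uniqueness.

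The main obstacle is upgrading subsequential convergence to convergence of the entire sequence while securing boundedness without assuming $\lambda_C>0$. I expect this to be the delicate point, since the penalty gives only partial control on $\mathbf{W}$ and $J_0$ alone is not coercive when $\mathbf{X}$ is rank deficient; the resolution is to lean on nonexpansiveness of $\mathbf{T}$ together with the guaranteed existence of a fixed point, which simultaneously bounds the orbit and powers the monotonicity argument. A secondary technical point is verifying that the row-decoupled $\ell_\infty$ prox is genuinely firmly nonexpansive and that $\mathbf{T}$ is continuous, both needed to pass to the limit in the fixed-point equation.
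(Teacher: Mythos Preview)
Your proposal is correct and hews to the same majorize--minimize / forward--backward skeleton as the paper, but the technical route diverges in two places worth flagging.

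For part~(a) the arguments are largely parallel: both show $\mathbf{T}$ is nonexpansive and asymptotically regular, then invoke the standard fixed-point convergence result. The paper establishes nonexpansiveness by passing to the Kronecker form and bounding $\sigma_{\max}(\mu\mathbf{I}-(\mathbf{X}^T\mathbf{X})\otimes(\mathbf{XX}^T))\le\mu$ via explicit positive-semidefiniteness lemmas, while you obtain the same conclusion from the Lipschitz constant of $\nabla J_0$ and firm nonexpansiveness of the prox; these are equivalent computations in different clothing. Your treatment of boundedness is slightly more careful than the paper's, since you explicitly secure a fixed point first (via existence of a minimizer and the optimality characterization from part~(b)) and use it to anchor the monotone-distance argument.

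The genuine difference is in part~(b). The paper does not appeal to the subdifferential characterization of the prox at all; instead it proves from scratch, via two real-analysis lemmas about convex functions of the form ``piecewise linear plus quadratic,'' the sharp inequality $\widehat{J}(\mathbf{W}+\mathbf{H};\mathbf{Z})\ge\widehat{J}(\mathbf{W};\mathbf{Z})+\mu\|\mathbf{H}\|_F^2$ whenever $\mathbf{W}=\mathbf{T}(\mathbf{Z})$, and then specializes $\mathbf{Z}=\mathbf{W}$ to conclude minimality. Your route---$\mathbf{W}=\mathrm{prox}_{\frac{\lambda_C}{2\mu}\Phi}(\mathbf{W}-\tfrac{1}{2\mu}\nabla J_0(\mathbf{W}))\Leftrightarrow 0\in\nabla J_0(\mathbf{W})+\lambda_C\partial\Phi(\mathbf{W})$---is shorter and invokes only standard convex-analysis facts about proximal operators, at the cost of not yielding the quantitative strong-convexity bound the paper extracts along the way. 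Part~(c) is essentially identical in both treatments.
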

\noindent To condense notation for the remainder of this section, we write $\mathbf{T}_{\mathbf{X}, \mu, \lambda_C} = \mathbf{T}$.

\subsection{Convergence to a Fixed Point of $\mathbf{T}$} 

We first provide six lemmas to assist in the proof of Theorem \ref{thm:alg2_correct}, part a.

\begin{lemma}
\label{lemma:psd}
    $\mathbf{(X^TX) \otimes (XX^T)}$ is symmetric and positive semidefinite. 
\end{lemma}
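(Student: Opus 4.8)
The plan is to exhibit $(\mathbf{X}^T\mathbf{X}) \otimes (\mathbf{X}\mathbf{X}^T)$ as a Gram matrix, that is, in the form $\mathbf{M}\mathbf{M}^T$ for an explicit $\mathbf{M}$, since any matrix of this form is automatically symmetric and positive semidefinite. The natural candidate is $\mathbf{M} = \mathbf{X}^T \otimes \mathbf{X}$, which is precisely the operator appearing in the reshaped problem of Equation \ref{eqn:C_reshape}, so the lemma also clarifies the structure of that formulation.

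First I would record the two standard Kronecker product identities needed: the transpose rule $(\mathbf{A} \otimes \mathbf{B})^T = \mathbf{A}^T \otimes \mathbf{B}^T$ and the mixed-product rule $(\mathbf{A} \otimes \mathbf{B})(\mathbf{C} \otimes \mathbf{D}) = (\mathbf{A}\mathbf{C}) \otimes (\mathbf{B}\mathbf{D})$, valid whenever the products $\mathbf{A}\mathbf{C}$ and $\mathbf{B}\mathbf{D}$ are defined. Since $\mathbf{X}^T \in \mathbb{R}^{n \times m}$ and $\mathbf{X} \in \mathbb{R}^{m \times n}$, the matrix $\mathbf{M} = \mathbf{X}^T \otimes \mathbf{X} \in \mathbb{R}^{nm \times nm}$ is square, and the dimensions in both products $\mathbf{X}^T\mathbf{X}$ and $\mathbf{X}\mathbf{X}^T$ are compatible.

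Then I would compute, using the transpose rule, $\mathbf{M}^T = \mathbf{X} \otimes \mathbf{X}^T$, and, applying the mixed-product rule with $\mathbf{A} = \mathbf{X}^T$, $\mathbf{B} = \mathbf{X}$, $\mathbf{C} = \mathbf{X}$, $\mathbf{D} = \mathbf{X}^T$,
\[
\mathbf{M}\mathbf{M}^T = (\mathbf{X}^T \otimes \mathbf{X})(\mathbf{X} \otimes \mathbf{X}^T) = (\mathbf{X}^T\mathbf{X}) \otimes (\mathbf{X}\mathbf{X}^T).
\]
Because $(\mathbf{M}\mathbf{M}^T)^T = \mathbf{M}\mathbf{M}^T$ and $\mathbf{v}^T \mathbf{M}\mathbf{M}^T \mathbf{v} = \|\mathbf{M}^T\mathbf{v}\|_2^2 \geq 0$ for every $\mathbf{v} \in \mathbb{R}^{nm}$, both symmetry and positive semidefiniteness follow at once.

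There is no serious obstacle here; the only point requiring care is the bookkeeping of transposes and dimensions in the Kronecker identities. As an alternative, avoiding the factorization, I could argue symmetry directly from $(\mathbf{X}^T\mathbf{X})^T = \mathbf{X}^T\mathbf{X}$ and $(\mathbf{X}\mathbf{X}^T)^T = \mathbf{X}\mathbf{X}^T$ together with the transpose rule, and positive semidefiniteness from the fact that the eigenvalues of $\mathbf{A} \otimes \mathbf{B}$ are the pairwise products of the eigenvalues of $\mathbf{A}$ and $\mathbf{B}$, which are nonnegative since $\mathbf{X}^T\mathbf{X}$ and $\mathbf{X}\mathbf{X}^T$ are themselves symmetric positive semidefinite Gram matrices. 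The factorization route is cleaner and fully self-contained, so I would present that one.
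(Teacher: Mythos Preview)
Your argument is correct, but it is not the route the paper takes. The paper declares symmetry ``clear'' and then verifies positive semidefiniteness by computing the quadratic form directly: using the vec--Kronecker identity to turn $\langle ((\mathbf{X}^T\mathbf{X})\otimes(\mathbf{X}\mathbf{X}^T))\mathbf{v},\mathbf{v}\rangle$ into a Frobenius inner product, and then inserting the symmetric square roots $(\mathbf{X}\mathbf{X}^T)^{1/2}$ and $(\mathbf{X}^T\mathbf{X})^{1/2}$ to recognize the result as $\|(\mathbf{X}\mathbf{X}^T)^{1/2}\mathbf{V}(\mathbf{X}^T\mathbf{X})^{1/2}\|_F^2$. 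Your Gram-matrix factorization $(\mathbf{X}^T\mathbf{X})\otimes(\mathbf{X}\mathbf{X}^T)=(\mathbf{X}^T\otimes\mathbf{X})(\mathbf{X}^T\otimes\mathbf{X})^T$ is shorter and avoids the square roots entirely; it also ties the lemma directly back to the operator $\mathbf{X}^T\otimes\mathbf{X}$ in Equation~\ref{eqn:C_reshape}, as you note. The one thing the paper's longer computation buys is the explicit identity $\langle ((\mathbf{X}^T\mathbf{X})\otimes(\mathbf{X}\mathbf{X}^T))\mathbf{v},\mathbf{v}\rangle = \|(\mathbf{X}\mathbf{X}^T)^{1/2}\mathbf{V}(\mathbf{X}^T\mathbf{X})^{1/2}\|_F^2$, which is quoted verbatim in the proof of the next lemma (Lemma~\ref{lemma:sigma_bound}) to bound the operator norm. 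Your factorization would in fact streamline that lemma too, since $\|\mathbf{M}\mathbf{M}^T\|_2=\|\mathbf{M}\|_2^2=\|\mathbf{X}^T\otimes\mathbf{X}\|_2^2=\|\mathbf{X}\|_2^4$, but be aware that as written the paper's Lemma~\ref{lemma:sigma_bound} leans on the square-root expression you have bypassed.
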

\begin{proof}
    $\mathbf{(X^TX) \otimes (XX^T)}$ is clearly symmetric. We will show that $\langle (\mathbf{(X^TX) \otimes (XX^T)})\mathbf{v}, \mathbf{v} \rangle \geq 0$, $\forall \mathbf{v} \in \mathbb{R}^{mn}$, thus proving the lemma. Fix $\mathbf{v} \in \mathbb{R}^{mn}$ and $\mathbf{V} \in \mathbb{R}^{m \times n}$ such that $\text{vec}(\mathbf{V}) = \mathbf{v}$.
    \begin{equation*}
        \begin{split}
            \langle (\mathbf{(X^TX) \otimes (XX^T)})\mathbf{v}, \mathbf{v} \rangle & = \langle \text{vec}(\mathbf{(XX^T)V(X^TX)}), \text{vec}(\mathbf{V}) \rangle \\
            & = \langle \mathbf{(XX^T)V(X^TX)}, \mathbf{V} \rangle_{\text{F}} \\
            & = \tr\{\mathbf{(XX^T)V(X^TX)V^T}\} \\
            & = \tr\{\mathbf{(XX^T)}^{\frac{1}{2}}\mathbf{V(X^TX)}^{\frac{1}{2}}\mathbf{(X^TX)}^{\frac{1}{2}}\mathbf{V^T(XX^T)}^{\frac{1}{2}}\} \\   
            & = ||\mathbf{(XX^T)}^{\frac{1}{2}}\mathbf{V(X^TX)}^{\frac{1}{2}}||_F^2 \geq 0,
        \end{split}
    \end{equation*}
where $\langle . \, , . \rangle_{\text{F}}$ is the Frobenius inner product. 

\end{proof}

\begin{lemma}
\label{lemma:sigma_bound}
$||\mathbf{(X^TX) \otimes (XX^T)}||_2 \leq (\sigma_{\text{max}}(\mathbf{X}))^4 = ||\mathbf{X}||_2^4$, where $\sigma_{\text{max}}(\mathbf{X})$ is the largest singular value of the matrix $\mathbf{X}$.    
\end{lemma}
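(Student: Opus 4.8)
The plan is to reduce the spectral norm of the Kronecker product to an estimate on an ordinary matrix triple product, reusing the vectorization identity already established in the proof of Lemma \ref{lemma:psd}. Since $\mathbf{(X^TX) \otimes (XX^T)}$ is symmetric and positive semidefinite by Lemma \ref{lemma:psd}, its spectral norm equals its largest eigenvalue, which in turn equals $\sup_{\mathbf{v} \neq \mathbf{0}} ||(\mathbf{(X^TX) \otimes (XX^T)})\mathbf{v}||_2 / ||\mathbf{v}||_2$. So it suffices to bound the action of the operator on an arbitrary vector.

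First I would fix $\mathbf{v} \in \mathbb{R}^{mn}$ and let $\mathbf{V} \in \mathbb{R}^{m \times n}$ be the reshape with $\text{vec}(\mathbf{V}) = \mathbf{v}$. Exactly as in Lemma \ref{lemma:psd}, the identity $(\mathbf{(X^TX) \otimes (XX^T)})\mathbf{v} = \text{vec}(\mathbf{(XX^T)V(X^TX)})$ holds, so that $||(\mathbf{(X^TX) \otimes (XX^T)})\mathbf{v}||_2 = ||\mathbf{(XX^T)V(X^TX)}||_F$. Next I would apply the standard mixed-norm submultiplicativity $||\mathbf{AVB}||_F \leq ||\mathbf{A}||_2 \, ||\mathbf{V}||_F \, ||\mathbf{B}||_2$ with $\mathbf{A} = \mathbf{XX^T}$ and $\mathbf{B} = \mathbf{X^TX}$. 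Finally, since $\mathbf{XX^T}$ and $\mathbf{X^TX}$ are symmetric positive semidefinite with largest eigenvalue $(\sigma_{\text{max}}(\mathbf{X}))^2$, we have $||\mathbf{XX^T}||_2 = ||\mathbf{X^TX}||_2 = (\sigma_{\text{max}}(\mathbf{X}))^2$. Combining these gives $||(\mathbf{(X^TX) \otimes (XX^T)})\mathbf{v}||_2 \leq (\sigma_{\text{max}}(\mathbf{X}))^4 \, ||\mathbf{v}||_2$, and taking the supremum over $\mathbf{v}$ yields the claim.

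There is no real obstacle here; the lemma is a one-line consequence of the vec identity plus submultiplicativity. The only points requiring a touch of care are (i) citing the correct direction of the vectorization identity (the same one used in Lemma \ref{lemma:psd}), and (ii) the identification $||\mathbf{XX^T}||_2 = (\sigma_{\text{max}}(\mathbf{X}))^2$, which follows from the SVD of $\mathbf{X}$. An equivalent route, giving equality rather than the stated inequality, is to invoke that the eigenvalues of a Kronecker product $\mathbf{A} \otimes \mathbf{B}$ are the pairwise products of the eigenvalues of $\mathbf{A}$ and $\mathbf{B}$, so that the top eigenvalue of $\mathbf{(X^TX) \otimes (XX^T)}$ is $(\sigma_{\text{max}}(\mathbf{X}))^2 \cdot (\sigma_{\text{max}}(\mathbf{X}))^2$; I would prefer the vec-based argument since it is self-contained and matches the machinery already developed in the preceding lemma.
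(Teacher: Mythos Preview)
Your proof is correct and follows essentially the same route as the paper: both reduce the spectral norm to a Frobenius-norm estimate via the vectorization identity of Lemma~\ref{lemma:psd} and then peel off factors by the submultiplicativity $\|\mathbf{A}\mathbf{V}\mathbf{B}\|_F \le \|\mathbf{A}\|_2\|\mathbf{V}\|_F\|\mathbf{B}\|_2$. The only cosmetic difference is that the paper passes through the Rayleigh quotient and the square-root factorization $\|\mathbf{(XX^T)}^{1/2}\mathbf{V}\mathbf{(X^TX)}^{1/2}\|_F^2$ inherited from Lemma~\ref{lemma:psd}, whereas you bound $\|\mathbf{(XX^T)}\mathbf{V}\mathbf{(X^TX)}\|_F$ directly; your version is marginally cleaner since it avoids the square roots altogether.
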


\begin{proof}
By Lemma \ref{lemma:psd}, the matrix $\mathbf{(X^TX) \otimes (XX^T)}$ is symmetric and positive semidefinite.  Thus, the eigenvalues and singular values of $\mathbf{(X^TX) \otimes (XX^T)}$ are the same. Hence
\begin{equation*}
    \begin{split}
        ||\mathbf{(X^TX) \otimes (XX^T)}||_2 & = \sigma_{\text{max}}(\mathbf{(X^TX) \otimes (XX^T))} \\
        & = \max_{||\mathbf{v}||_2=1} \langle \mathbf{((X^TX) \otimes (XX^T))v, v}\rangle \\
        & = \max_{||\mathbf{V}||_F=1} ||\mathbf{(XX^T)}^{\frac{1}{2}}\mathbf{V(X^TX)}^{\frac{1}{2}}||_F^2,
    \end{split}
\end{equation*}
where the last equation follows from the proof of Lemma \ref{lemma:psd}.
\begin{equation*}
    \begin{split}
        ||\mathbf{(XX^T)}^{\frac{1}{2}}\mathbf{V(X^TX)}^{\frac{1}{2}}||_F & \leq ||\mathbf{(XX^T)}^{\frac{1}{2}}||_2 ||\mathbf{V(X^TX)}^{\frac{1}{2}}||_F \\
        & = ||\mathbf{(XX^T)}^{\frac{1}{2}}||_2 ||\mathbf{(X^TX)}^{\frac{1}{2}}\mathbf{V^T}||_F \\
        & \leq ||\mathbf{(XX^T)}^{\frac{1}{2}}||_2 ||\mathbf{(X^TX)}^{\frac{1}{2}}||_2 ||\mathbf{V^T}||_F \\
        & \leq \sigma_{\text{max}}(\mathbf{(XX^T)}^{\frac{1}{2}}) \sigma_{\text{max}}(\mathbf{(X^TX)}^{\frac{1}{2}}) ||\mathbf{V^T}||_F \\
        & = (\sigma_{\text{max}}(\mathbf{X}))^2 ||\mathbf{V^T}||_F. 
    \end{split}
\end{equation*}
Thus 
\[
\max_{||\mathbf{V}||_F=1} ||\mathbf{(XX^T)}^{\frac{1}{2}}\mathbf{V(X^TX)}^{\frac{1}{2}}||_F^2 \leq (\sigma_{\text{max}}\mathbf{(X)})^4,
\]
proving the result.
\end{proof}

\begin{lemma}
\label{lem:L_opnorm}
    Let  $\mu \geq ||\mathbf{X}||_2^4$ and define  the operator $\mathscr{L}: \mathbf{U} \mapsto \mu \mathbf{U} - \mathbf{XX^T U X^TX}$.  Then $ || \mathscr{L} || \leq \mu$.  
\end{lemma}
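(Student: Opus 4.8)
The plan is to linearize the problem by passing to vectorized coordinates, where the matrix operator $\mathscr{L}$ becomes an ordinary symmetric matrix whose spectral norm is immediately controlled by the two preceding lemmas. First I would observe that $\mathscr{L}$ acts on $\mathbb{R}^{m \times n}$ (since $\mathbf{XX^T} \in \mathbb{R}^{m \times m}$ and $\mathbf{X^TX} \in \mathbb{R}^{n \times n}$ force $\mathbf{U} \in \mathbb{R}^{m \times n}$), and that the operator norm $\|\mathscr{L}\|$ is taken with respect to the Frobenius norm on this space. Since $\text{vec}(\cdot)$ is an isometry from $(\mathbb{R}^{m \times n}, \|\cdot\|_F)$ onto $(\mathbb{R}^{mn}, \|\cdot\|_2)$, I would reduce the claim to a statement about an ordinary spectral norm.

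The key identity, already established inside the proof of Lemma \ref{lemma:psd}, is that $\text{vec}(\mathbf{XX^T U X^TX}) = (\mathbf{(X^TX) \otimes (XX^T)})\,\text{vec}(\mathbf{U})$. Applying $\text{vec}$ to the definition of $\mathscr{L}$ therefore gives $\text{vec}(\mathscr{L}(\mathbf{U})) = \bigl(\mu \mathbf{I} - \mathbf{(X^TX) \otimes (XX^T)}\bigr)\text{vec}(\mathbf{U})$, so the matrix representing $\mathscr{L}$ in vectorized coordinates is $\mathbf{P} := \mu \mathbf{I} - \mathbf{(X^TX) \otimes (XX^T)}$. Because $\text{vec}$ is an isometry, $\|\mathscr{L}\| = \|\mathbf{P}\|_2$, and since $\mathbf{P}$ is symmetric (it is $\mu \mathbf{I}$ minus the symmetric matrix from Lemma \ref{lemma:psd}), its spectral norm equals its largest eigenvalue in absolute value.

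It then remains to bound the eigenvalues of $\mathbf{P}$. By Lemma \ref{lemma:psd}, $\mathbf{(X^TX) \otimes (XX^T)}$ is symmetric positive semidefinite, so its eigenvalues lie in $[0, \|\mathbf{(X^TX) \otimes (XX^T)}\|_2]$, and by Lemma \ref{lemma:sigma_bound} this upper endpoint is at most $\|\mathbf{X}\|_2^4 \leq \mu$. Hence every eigenvalue of $\mathbf{(X^TX) \otimes (XX^T)}$ lies in $[0,\mu]$, which places every eigenvalue of $\mathbf{P} = \mu \mathbf{I} - \mathbf{(X^TX) \otimes (XX^T)}$ in $[0,\mu]$ as well. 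Therefore $\|\mathscr{L}\| = \|\mathbf{P}\|_2 \leq \mu$, as required. I do not expect a serious obstacle here: the only points demanding care are confirming the correct ordering in the Kronecker identity (so that $\mathbf{X^TX}$ sits in the left factor and $\mathbf{XX^T}$ in the right, matching Lemma \ref{lemma:psd}) and noting explicitly that the subtraction $\mu \mathbf{I} - \mathbf{(X^TX) \otimes (XX^T)}$ cannot produce an eigenvalue of magnitude exceeding $\mu$ precisely because the subtracted matrix is positive semidefinite with top eigenvalue at most $\mu$.
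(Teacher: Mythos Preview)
Your proposal is correct and follows essentially the same route as the paper: vectorize $\mathscr{L}$ to identify it with the symmetric matrix $\mu\mathbf{I} - (\mathbf{X^TX})\otimes(\mathbf{XX^T})$, then invoke Lemmas~\ref{lemma:psd} and~\ref{lemma:sigma_bound} to place all eigenvalues of that matrix in $[0,\mu]$ and conclude $\|\mathscr{L}\|\le\mu$. The only cosmetic difference is that the paper writes the operator norm as a max over $\|\mathbf{U}\|_F=1$ before passing to the vectorized form, whereas you invoke the isometry of $\text{vec}$ directly; the substance is identical.
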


\begin{proof}
The operator norm of $\mathscr{L}$ is
\begin{equation*}
    \begin{split}
        || \mathscr{L} || & = \max_{||\mathbf{U}||_F = 1} || \mathscr{L}(\mathbf{U})||_F \\
        & = \max_{||\mathbf{U}||_F = 1} ||\text{vec}(\mu \mathbf{U} - \mathbf{XX^T U X^TX})||_ 2\\
        & = \max_{||\mathbf{U}||_F = 1} ||(\mu \mathbf{I} - \mathbf{(X^TX) \otimes (XX^T))}\text{vec}(\mathbf{U})||_2 \\  
        & = \sigma_{\text{max}}(\mu \mathbf{I} - \mathbf{(X^TX) \otimes (XX^T)}). 
    \end{split}
\end{equation*}
By Lemmas \ref{lemma:psd} and \ref{lemma:sigma_bound}, $\mathbf{(X^TX) \otimes (XX^T)}$ is a symmetric positive semidefinite matrix with 2-norm bounded above by $||\mathbf{X}||_2^4$.  Hence 
\[
  || \mathscr{L} || = \sigma_{\text{max}}(\mu \mathbf{I} - \mathbf{(X^TX) \otimes (XX^T)}) \leq \mu.
\]    
\end{proof}

\begin{lemma}
\label{lem:nonexp}
Let $\mu \geq ||\mathbf{X}||_2^4$.  Then the mapping $\mathbf{T}$ is nonexpansive, i.e., $\forall \, \mathbf{Z}, \mathbf{Z'} \in \mathbb{R}^{n \times m}$,
\[
||\mathbf{T(Z)}-\mathbf{T(Z')}||_{F} \leq ||\mathbf{Z}-\mathbf{Z'}||_{F}.
\]
\end{lemma}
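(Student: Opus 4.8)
The plan is to exploit the two-stage structure of $\mathbf{T}$: it first applies the affine map $\mathbf{Z} \mapsto \tfrac{1}{\mu}\mathbf{L}(\mathbf{Z})^T$ and then applies, row by row, the proximal operator $P := \textbf{prox}_{\frac{\lambda_C}{2\mu}\|\cdot\|_\infty}$. Since $P$ is the proximal operator of a closed, proper, convex function, it is firmly nonexpansive and in particular $1$-Lipschitz in the Euclidean norm, $\|P(\mathbf{y}) - P(\mathbf{y'})\|_2 \le \|\mathbf{y}-\mathbf{y'}\|_2$. Using the identity $\|\mathbf{A}\|_F^2 = \sum_i \|\mathbf{A}(i,:)\|_2^2$ together with this row-wise contraction, I would reduce the claim to controlling the Frobenius distance between the arguments of $P$:
\[
\|\mathbf{T}(\mathbf{Z}) - \mathbf{T}(\mathbf{Z'})\|_F^2 = \sum_{i=1}^n \big\|P\big(\tfrac{1}{\mu}\mathbf{L}(\mathbf{Z})^T(i,:)\big) - P\big(\tfrac{1}{\mu}\mathbf{L}(\mathbf{Z'})^T(i,:)\big)\big\|_2^2 \le \tfrac{1}{\mu^2}\big\|\mathbf{L}(\mathbf{Z})^T - \mathbf{L}(\mathbf{Z'})^T\big\|_F^2.
\]

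It therefore remains to show $\|\mathbf{L}(\mathbf{Z})^T - \mathbf{L}(\mathbf{Z'})^T\|_F \le \mu\|\mathbf{Z}-\mathbf{Z'}\|_F$. Here I would first transpose the defining expression for $\mathbf{L}$ to obtain $\mathbf{L}(\mathbf{Z})^T = \mu\mathbf{Z} + \mathbf{X}^T\mathbf{X}\mathbf{X}^T - \mathbf{X}^T\mathbf{X}\,\mathbf{Z}\,\mathbf{XX}^T$, and observe that the constant term $\mathbf{X}^T\mathbf{X}\mathbf{X}^T$ cancels in the difference, leaving the purely linear expression
\[
\mathbf{L}(\mathbf{Z})^T - \mathbf{L}(\mathbf{Z'})^T = \mu\,\mathbf{U} - \mathbf{X}^T\mathbf{X}\,\mathbf{U}\,\mathbf{XX}^T, \qquad \mathbf{U} := \mathbf{Z}-\mathbf{Z'}.
\]

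The key observation is that this is precisely the operator $\mathscr{L}$ of Lemma~\ref{lem:L_opnorm} \emph{seen through a transpose}: taking the transpose of the right-hand side gives $\mu\mathbf{U}^T - \mathbf{XX}^T\,\mathbf{U}^T\,\mathbf{X}^T\mathbf{X} = \mathscr{L}(\mathbf{U}^T)$. Since transposition is a Frobenius isometry, Lemma~\ref{lem:L_opnorm} (which gives $\|\mathscr{L}\| \le \mu$ whenever $\mu \ge \|\mathbf{X}\|_2^4$) yields
\[
\big\|\mathbf{L}(\mathbf{Z})^T - \mathbf{L}(\mathbf{Z'})^T\big\|_F = \|\mathscr{L}(\mathbf{U}^T)\|_F \le \|\mathscr{L}\|\,\|\mathbf{U}^T\|_F \le \mu\,\|\mathbf{U}\|_F = \mu\,\|\mathbf{Z}-\mathbf{Z'}\|_F.
\]
Combining this with the earlier reduction and taking square roots gives the nonexpansiveness bound.

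I expect the main obstacle to be bookkeeping rather than conceptual: carefully transposing the cubic matrix products defining $\mathbf{L}$, verifying that the constant term drops out of the difference, and recognizing the resulting linear map as a transposed copy of $\mathscr{L}$ so that Lemma~\ref{lem:L_opnorm} applies verbatim. The only other point needing a word of justification is the nonexpansiveness of $P$, for which I would cite the standard fact that proximal operators of convex functions are firmly nonexpansive; this applies here since $\|\cdot\|_\infty$ is convex.
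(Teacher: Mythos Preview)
Your proposal is correct and follows essentially the same approach as the paper: reduce to row-wise nonexpansiveness of the proximal operator, then bound $\tfrac{1}{\mu}\|\mathbf{L}(\mathbf{Z})-\mathbf{L}(\mathbf{Z'})\|_F$ via Lemma~\ref{lem:L_opnorm}. If anything, your transpose bookkeeping is more careful than the paper's, which writes $\mathscr{L}(\mathbf{Z}-\mathbf{Z'})$ where strictly speaking the argument should be $(\mathbf{Z}-\mathbf{Z'})^T$; your explicit identification of the difference with $\mathscr{L}(\mathbf{U}^T)$ through a second transpose makes this step rigorous.
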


\begin{proof}

By the fact that $\textbf{prox}_{\frac{\lambda}{2\mu} ||\cdot||_\infty}$ is nonexpansive \cite{parikh_boyd_2014} and Lemma \ref{lem:L_opnorm}, we have 
\begin{equation*}
    \begin{split}
        ||\mathbf{T(Z)}-\mathbf{T(Z')}||^2_{F} & = \sum_{i=1}^n \left\| \textbf{prox}_{\frac{\lambda}{2\mu} ||\cdot||_\infty} \left(\frac{1}{\mu}\mathbf{[L(Z)]^T}(i,:) \right) -  \textbf{prox}_{\frac{\lambda}{2\mu} ||\cdot||_\infty} \left( \frac{1}{\mu}\mathbf{[L(Z')]^T}(i,:) \right) \right \|_2^2  \\
        & \leq \sum_{i=1}^n \left\| \frac{1}{\mu}\mathbf{[L(Z)]^T}(i,:) - \frac{1}{\mu}\mathbf{[L(Z')]^T}(i,:) \right \|_2^2 \\
        & = \frac{1}{\mu^2} ||\mathbf{L(Z)}-\mathbf{L(Z')}||^2_{F} \\
        & = \frac{1}{\mu^2} ||\mathscr{L}(\mathbf{Z - Z'}) ||^2_{F} \\
        & \leq  \frac{1}{\mu^2} ||\mathscr{L}||^2 ||\mathbf{Z} - \mathbf{Z'}||_F^2 \\
        & \leq ||\mathbf{Z} - \mathbf{Z'}||_F^2.
    \end{split}
\end{equation*}

\end{proof}

We omit proofs of the next lemma and theorem, as they largely mirror those of \cite{daubechies_etal_2004}.

\begin{lemma}
\label{thm:asy_reg_maintext}
    Let $\mu > ||\mathbf{X}||_2^4$.  Then the mapping $\mathbf{T}$ is asymptotically regular, i.e. $\forall \, \mathbf{Z} \in \mathbb{R}^{n \times m}$,
\[
\lim_{k \rightarrow \infty} ||\mathbf{T}^{k+1}(\mathbf{Z})-\mathbf{T}^{k}(\mathbf{Z})||_{F} = 0.
\]
\end{lemma}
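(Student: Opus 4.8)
The plan is to establish asymptotic regularity through the majorization–minimization (surrogate functional) argument of \cite{daubechies_etal_2004}, adapted to the $\ell_\infty$ penalty. The engine is the surrogate functional
\[
\widehat{J}(\mathbf{W},\mathbf{Z}) = J(\mathbf{W}) + \mu\|\mathbf{W}-\mathbf{Z}\|_F^2 - \|\mathbf{XWX}-\mathbf{XZX}\|_F^2 ,
\]
and the first step is to show that $\widehat{J}(\cdot,\mathbf{Z})$ \emph{majorizes} $J$, i.e.\ $\widehat{J}(\mathbf{W},\mathbf{Z}) \geq J(\mathbf{W})$ for every $\mathbf{W}$, with equality when $\mathbf{W}=\mathbf{Z}$. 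This reduces to the bound $\|\mathbf{X}(\mathbf{W}-\mathbf{Z})\mathbf{X}\|_F^2 \leq \|\mathbf{X}\|_2^4\,\|\mathbf{W}-\mathbf{Z}\|_F^2$, which follows from submultiplicativity, $\|\mathbf{XUX}\|_F \leq \|\mathbf{X}\|_2^2\|\mathbf{U}\|_F$ (equivalently from Lemmas \ref{lemma:psd} and \ref{lemma:sigma_bound} applied to the Kronecker operator governing $\mathbf{U}\mapsto\mathbf{XUX}$), together with the standing hypothesis $\mu > \|\mathbf{X}\|_2^4$. The equality at $\mathbf{W}=\mathbf{Z}$ holds because the two extra terms both vanish there.

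Next I would exploit monotone descent. By construction $\mathbf{W}^k = \mathbf{T}(\mathbf{W}^{k-1})$ is the global minimizer of $\widehat{J}(\cdot,\mathbf{W}^{k-1})$, computed row-by-row via the proximal operator of $\|\cdot\|_\infty$. Combining minimality with the majorization gives the chain
\[
J(\mathbf{W}^k) \leq \widehat{J}(\mathbf{W}^k,\mathbf{W}^{k-1}) \leq \widehat{J}(\mathbf{W}^{k-1},\mathbf{W}^{k-1}) = J(\mathbf{W}^{k-1}),
\]
so $\{J(\mathbf{W}^k)\}$ is nonincreasing; being bounded below by $0$, it converges.

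The final step is a summability argument. Subtracting $J(\mathbf{W}^k)$ from the descent chain and inserting the explicit form of the majorization gap yields
\[
J(\mathbf{W}^{k-1}) - J(\mathbf{W}^k) \geq \mu\|\mathbf{W}^k-\mathbf{W}^{k-1}\|_F^2 - \|\mathbf{X}(\mathbf{W}^k-\mathbf{W}^{k-1})\mathbf{X}\|_F^2 \geq (\mu - \|\mathbf{X}\|_2^4)\,\|\mathbf{W}^k-\mathbf{W}^{k-1}\|_F^2 .
\]
Summing over $k=1,\dots,N$ telescopes the left side to $J(\mathbf{W}^0) - J(\mathbf{W}^N) \leq J(\mathbf{W}^0) = \|\mathbf{X}\|_F^2 < \infty$. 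Since $\mu > \|\mathbf{X}\|_2^4$ strictly, the constant $\mu - \|\mathbf{X}\|_2^4$ is positive, so $\sum_k \|\mathbf{W}^k-\mathbf{W}^{k-1}\|_F^2$ converges, forcing $\|\mathbf{W}^k-\mathbf{W}^{k-1}\|_F \to 0$. As $\mathbf{W}^k = \mathbf{T}^k(\mathbf{Z})$ for the arbitrary initial point $\mathbf{Z}=\mathbf{W}^0$, this is precisely $\lim_{k\to\infty}\|\mathbf{T}^{k+1}(\mathbf{Z})-\mathbf{T}^k(\mathbf{Z})\|_F = 0$.

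The calculations are routine; the two delicate points are where I expect the real content to lie. First, one must verify that each proximal subproblem genuinely returns the global minimizer of $\widehat{J}(\cdot,\mathbf{W}^{k-1})$, so that the descent inequality is legitimate — this uses the separability of the surrogate into $n$ strictly convex row problems and the well-definedness of $\textbf{prox}_{\alpha\|\cdot\|_\infty}$. Second, and most importantly, the argument hinges on the \emph{strict} inequality $\mu > \|\mathbf{X}\|_2^4$: it is what upgrades mere convergence of the telescoping series into the vanishing of successive differences. The weaker condition $\mu \geq \|\mathbf{X}\|_2^4$ used in the preceding lemmas suffices for majorization and nonexpansiveness but would collapse the coefficient $\mu-\|\mathbf{X}\|_2^4$ to zero and break the conclusion.
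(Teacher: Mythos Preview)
Your argument is correct and follows the same surrogate-functional descent route that the paper (omitting the proof) borrows from Daubechies--Defrise--De Mol: majorize $J$ by $\widehat{J}$, extract a quadratic decrement from the minimality of $\mathbf{W}^k$, telescope, and invoke the strict inequality $\mu > \|\mathbf{X}\|_2^4$ to force $\|\mathbf{W}^k-\mathbf{W}^{k-1}\|_F\to 0$. One small slip: $J(\mathbf{W}^0)=\|\mathbf{X}\|_F^2$ holds only for the specific initialization $\mathbf{W}^0=\mathbf{0}$, whereas the lemma is stated for arbitrary $\mathbf{Z}=\mathbf{W}^0$; but since $J(\mathbf{W}^0)<\infty$ for any $\mathbf{W}^0$, the telescoping bound $J(\mathbf{W}^0)-J(\mathbf{W}^N)\leq J(\mathbf{W}^0)$ still does the job and the conclusion is unaffected.
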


\begin{lemma}
\label{thm:conv_maintext}
    Let the mapping $\mathbf{T}: \mathbb{R}^{n \times m} \rightarrow \mathbb{R}^{n \times m}$ be nonexpansive and asymptotically regular.  Then the sequence $\{\mathbf{W}^k = \mathbf{T}^k(\mathbf{W}^0)\}_{k \in \mathbb{N}}$ converges to a fixed point of $\mathbf{T}$.
\end{lemma}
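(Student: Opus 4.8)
The plan is to prove Lemma~\ref{thm:conv_maintext} by the classical fixed-point argument of Browder--Petryshyn and Opial, specialized to the finite-dimensional space $\mathbb{R}^{n\times m}$, where it becomes markedly simpler than the $\ell^2$ version in \cite{daubechies_etal_2004}: strong and weak convergence coincide, so Bolzano--Weierstrass replaces the weak-compactness and Opial-condition machinery. The two structural facts that drive the argument are already in hand, namely that $\mathbf{T}$ is nonexpansive (Lemma~\ref{lem:nonexp}) and asymptotically regular (Lemma~\ref{thm:asy_reg_maintext}); nonexpansiveness also makes $\mathbf{T}$ continuous.

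The backbone is a single monotonicity observation: if $\bar{\mathbf{W}}$ is any fixed point of $\mathbf{T}$, then nonexpansiveness gives $\|\mathbf{W}^{k+1}-\bar{\mathbf{W}}\|_F = \|\mathbf{T}(\mathbf{W}^k)-\mathbf{T}(\bar{\mathbf{W}})\|_F \le \|\mathbf{W}^k-\bar{\mathbf{W}}\|_F$, so the scalar sequence $d_k := \|\mathbf{W}^k-\bar{\mathbf{W}}\|_F$ is non-increasing and hence convergent. I would then carry out the following steps in order. First, secure that the orbit $\{\mathbf{W}^k\}$ is bounded (discussed below). Second, invoke Bolzano--Weierstrass to extract a subsequence $\mathbf{W}^{k_j}\to\mathbf{W}^\infty$. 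Third, show $\mathbf{W}^\infty$ is a fixed point: by continuity $\mathbf{T}(\mathbf{W}^{k_j})\to\mathbf{T}(\mathbf{W}^\infty)$, while asymptotic regularity gives $\|\mathbf{W}^{k_j+1}-\mathbf{W}^{k_j}\|_F\to 0$, so that $\mathbf{W}^{k_j+1}\to\mathbf{W}^\infty$ as well; since $\mathbf{W}^{k_j+1}=\mathbf{T}(\mathbf{W}^{k_j})$, uniqueness of limits forces $\mathbf{T}(\mathbf{W}^\infty)=\mathbf{W}^\infty$. Fourth, upgrade to convergence of the full sequence: apply the monotonicity observation with $\bar{\mathbf{W}}=\mathbf{W}^\infty$, so $d_k=\|\mathbf{W}^k-\mathbf{W}^\infty\|_F$ is non-increasing; since its subsequence $d_{k_j}\to 0$, the whole sequence satisfies $d_k\to 0$, i.e.\ $\mathbf{W}^k\to\mathbf{W}^\infty$.

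The main obstacle is the first step, boundedness of the orbit: nonexpansiveness and asymptotic regularity alone do not force it (a McShane $1$-Lipschitz extension of a map whose consecutive increments shrink to zero but sum to infinity produces an unbounded, fixed-point-free orbit), so boundedness---equivalently, existence of a fixed point---must be supplied from the problem structure rather than the abstract hypotheses. I would obtain it from the functional: because each iterate minimizes the surrogate functional in its first argument, $J(\mathbf{W}^k)\le \widehat{J}(\mathbf{W}^k,\mathbf{W}^{k-1})\le \widehat{J}(\mathbf{W}^{k-1},\mathbf{W}^{k-1})=J(\mathbf{W}^{k-1})$, where the first inequality uses $\mu\ge\|\mathbf{X}\|_2^4$ together with Lemma~\ref{lemma:sigma_bound} to guarantee the surrogate correction $\mu\|\mathbf{W}-\mathbf{Z}\|_F^2-\|\mathbf{XWX}-\mathbf{XZX}\|_F^2$ is nonnegative. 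Thus $\{J(\mathbf{W}^k)\}$ is non-increasing and bounded above by $J(\mathbf{W}^0)$. Finally, for $\lambda_C>0$ the penalty is coercive, since $\sum_{i=1}^n\|\mathbf{W}(i,:)\|_\infty\ge \tfrac{1}{\sqrt{m}}\|\mathbf{W}\|_F$, so the sublevel set $\{\mathbf{W}:J(\mathbf{W})\le J(\mathbf{W}^0)\}$ is bounded and contains the entire orbit. With boundedness in place, the four steps above close the argument and show $\mathbf{W}^k$ converges to a fixed point of $\mathbf{T}$.
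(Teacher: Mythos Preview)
The paper omits the proof entirely, deferring to \cite{daubechies_etal_2004}. Your argument is the natural finite-dimensional specialization of theirs: Bolzano--Weierstrass replaces weak sequential compactness, and the monotonicity of $k\mapsto\|\mathbf{W}^k-\bar{\mathbf{W}}\|_F$ for any fixed point $\bar{\mathbf{W}}$ replaces the Opial machinery needed to pass from weak subsequential limits to full-sequence convergence. This is correct and strictly more elementary than the Hilbert-space version, and the four-step skeleton you outline is exactly the Browder--Petryshyn route.

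More importantly, you put your finger on a genuine defect in the lemma \emph{as stated}: nonexpansiveness plus asymptotic regularity on $\mathbb{R}^{n\times m}$ do not by themselves force a bounded orbit or even the existence of a fixed point (your translation-type counterexample is apt; e.g.\ $T(x)=x+1/(1+|x|)$ on $\mathbb{R}$ is nonexpansive, asymptotically regular, and fixed-point free). In \cite{daubechies_etal_2004} boundedness is obtained precisely the way you propose---from monotonicity of the functional along iterates combined with coercivity of the penalty---so your repair is faithful to the intended argument, not a detour. The chain $J(\mathbf{W}^k)\le\widehat{J}(\mathbf{W}^k,\mathbf{W}^{k-1})\le\widehat{J}(\mathbf{W}^{k-1},\mathbf{W}^{k-1})=J(\mathbf{W}^{k-1})$ and the norm inequality $\sum_i\|\mathbf{W}(i,:)\|_\infty\ge m^{-1/2}\|\mathbf{W}\|_F$ are both correct. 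One small residual gap: your coercivity step requires $\lambda_C>0$, whereas the paper allows $\lambda_C\ge 0$. For $\lambda_C=0$ the iteration reduces to a Landweber scheme for $\min_{\mathbf{W}}\|\mathbf{X}-\mathbf{XWX}\|_F^2$; with $\mathbf{W}^0=\mathbf{0}$ the iterates remain in the range of $\mathbf{V}\mapsto \mathbf{X}^T\mathbf{V}\mathbf{X}^T$, on which the forward map is injective, and boundedness (indeed convergence) follows from standard Landweber theory. You should flag this case separately.
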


We are now ready to prove Theorem \ref{thm:alg2_correct}, part a.

\begin{proof}[Proof of Theorem \ref{thm:alg2_correct}, part a.]
    By Lemmas \ref{lem:nonexp} and \ref{thm:asy_reg_maintext}, $\mathbf{T}$ is nonexpansive and asymptotically regular.  By Lemma \ref{thm:conv_maintext}, the result is proven.
\end{proof}

\subsection{Convergence to a Minimizer of $J$}

We now focus on proving Theorem \ref{thm:alg2_correct}, part b and begin by establishing three lemmas.

\begin{lemma}
\label{lemma:conv}
Let $f:I \rightarrow \mathbb{R}$ be convex, with $I=(a,b)$. If  $f(x) = f_1(x) + \alpha x^2$, where $\alpha > 0$ and $f_1(x)$ is piecewise linear, then $f_1$ is convex and therefore $f$ is strictly convex.
\end{lemma}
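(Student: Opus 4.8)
The plan is to reduce the convexity of $f_1$ to a slope condition at its finitely many breakpoints, and then to transfer that condition from the hypothesis that $f = f_1 + \alpha x^2$ is convex. Since $f_1$ is piecewise linear, I would first record that $I = (a,b)$ decomposes into finitely many subintervals separated by breakpoints $a < t_1 < \cdots < t_N < b$, on each of which $f_1$ is affine (I will take ``piecewise linear'' to mean continuous with finitely many affine pieces). On the interior of each piece $f_1$ is affine, hence convex, and there $f$ is smooth with second derivative $2\alpha > 0$; so the only locations where convexity of $f_1$ could possibly fail are the breakpoints $t_j$.

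Next I would invoke the standard characterization that a convex function on an open interval has one-sided derivatives everywhere and that its left derivative never exceeds its right derivative at any point (equivalently, its secant slopes are nondecreasing). For the piecewise linear $f_1$, let $s_j^{-}$ and $s_j^{+}$ denote the left and right slopes at the breakpoint $t_j$; then convexity of $f_1$ is precisely the assertion that $s_j^{-} \le s_j^{+}$ for every $j$, and away from the breakpoints the slope condition is automatic since $f_1$ is affine.

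The crux is the observation that adding the $C^1$ function $\alpha x^2$ leaves the jump in the one-sided derivatives at a breakpoint unchanged. Because $\alpha x^2$ is differentiable with continuous derivative $2\alpha x$, the left and right derivatives of $f$ at $t_j$ are $s_j^{-} + 2\alpha t_j$ and $s_j^{+} + 2\alpha t_j$, respectively. Applying convexity of $f$ (left derivative $\le$ right derivative) at $t_j$ yields $s_j^{-} + 2\alpha t_j \le s_j^{+} + 2\alpha t_j$, and the $2\alpha t_j$ terms cancel, leaving $s_j^{-} \le s_j^{+}$. Since this holds at every breakpoint and $f_1$ is affine in between, $f_1$ is convex. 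Notably the value of $\alpha$ never enters, which is reassuring: a downward slope jump (a concave kink) is a non-smooth feature that the smooth perturbation $\alpha x^2$ cannot repair.

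Finally, with $f_1$ now known to be convex and $\alpha x^2$ strictly convex (as $\alpha > 0$), the sum $f = f_1 + \alpha x^2$ is strictly convex, since adding a strictly convex function to a convex one produces a strictly convex function. The statement is elementary, and the only step requiring genuine care is the one-sided derivative bookkeeping at the breakpoints together with the tacit continuity built into ``piecewise linear.'' An alternative that sidesteps one-sided derivatives is a direct three-point secant estimate—choosing $x < t_j < z$ near $t_j$ and verifying the secant-slope inequality for $f_1$ from that for $f$—but the $C^1$-jump argument is the most economical and makes transparent why the conclusion is independent of $\alpha$.
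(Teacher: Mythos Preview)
Your argument is correct and follows the same overall plan as the paper---localize to the finitely many breakpoints and show that the slope is nondecreasing there---but you execute it via a different and cleaner device. You invoke the one-sided derivative characterization of convexity and observe that adding the $C^1$ function $\alpha x^2$ shifts both one-sided derivatives at $t_j$ by the same amount $2\alpha t_j$, so the inequality $s_j^- \le s_j^+$ transfers immediately from $f$ to $f_1$. The paper instead applies the midpoint inequality $\tfrac{1}{2}\bigl(f(x_k-h)+f(x_k+h)\bigr) \ge f(x_k)$ for small $h>0$, expands using the affine formulas on each side, and arrives at $(a_{k+1}-a_k)h + 2\alpha h^2 \ge 0$; it then argues by a case analysis on the roots of this quadratic that $a_k \le a_{k+1}$. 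Your route is shorter and makes the $\alpha$-independence transparent, at the cost of citing the one-sided derivative fact as known; the paper's route is more self-contained but requires the quadratic case split. Your closing ``three-point secant'' alternative is in fact close in spirit to what the paper actually does.
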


\begin{proof}
Let $x_0, x_1, x_2, ..., x_n,x_{n+1} \in I$ such that $a=x_0 < x_1 < x_2 < ... < x_n < x_{n+1}=b$, and $\forall k \in [n+1]$, $f_1$ restricted to $(x_{k-1},x_{k})$ is linear and given by $f_1(x) = a_k x + b_k$.  Due to the convexity of $f$, $f_1$ is continuous.  Hence $a_k x_{k} + b_k = a_{k+1} x_{k}  + b_{k+1}$, for every $k \in [n]$.

\noindent We show that $a_1 \leq a_2 \leq ... \leq a_{n+1}$, which implies that $f_1$ is convex.  Consider the inequality $a_k \leq a_{k+1}$ for any $k \in [n]$.  Let $h \in \mathbb{R}$ such that $0 \leq h < \min (x_k-x_{k-1},x_{k+1}-x_k)$. The convexity of $f$ implies that
\[
\frac{f(x_k-h)+f(x_k+h)}{2} \geq f(x_k).
\]
Thus,
\begin{equation*}
    \begin{split}
        & \frac{a_k(x_k-h) + b_k + \alpha(x_k-h)^2 + a_{k+1}(x_k+h) + b_{k+1} + \alpha(x_k +h)^2}{2} \geq \\
        & \quad \frac{a_k x_k+b_k + \alpha x_k^2 + a_{k+1} x_k+b_{k+1} + \alpha x_k^2}{2},
    \end{split}
\end{equation*}
where we have used the continuity of $f_1$ at $x_k$ on the right hand side of the inequality. Hence
\begin{equation}
\label{eqn:quad_h}
(a_{k+1}-a_{k})h + 2 \alpha h^2 \geq 0.   
\end{equation}
$(a_{k+1}-a_{k})h + 2 \alpha h^2$ is a convex quadratic in $h$ with roots $0$ and $\frac{-(a_{k+1}-a_{k})}{2\alpha}$.  This leads to three cases:
\begin{enumerate}
    \item The root $\frac{-(a_{k+1}-a_{k})}{2\alpha} = 0$, i.e. $0$ is a root of multiplicity two.  This implies $a_{k} = a_{k+1}$ and Equation \ref{eqn:quad_h} holds.
    \item The root $\frac{-(a_{k+1}-a_{k})}{2\alpha} < 0$.  This implies $a_{k} < a_{k+1}$ and Equation \ref{eqn:quad_h} holds for $h \geq 0$.  
    \item The root $\frac{-(a_{k+1}-a_{k})}{2\alpha} > 0$. This implies $a_{k} > a_{k+1}$.  For $h$ such that \newline $0 < h < \min(x_{k}-x_{k-1},x_{k+1}-x_{k}, \frac{-(a_{k+1}-a_{k})}{2\alpha})$, Equation \ref{eqn:quad_h} does not hold. 
\end{enumerate}
Thus, Equation \ref{eqn:quad_h} holds $\forall h$ such that $0 \leq h < \min(x_k-x_{k-1},x_{k+1}-x_k)$ if and only if $a_{k} \leq a_{k+1}$.  Hence $a_1 \leq a_2 \leq ... \leq a_{n+1}$, which implies that $f_1$ is convex.
\end{proof}

\begin{lemma}
\label{lem:F_greater_0}
    Let $f:I \rightarrow \mathbb{R}$ with $I=(a,b)$ such that $0 \in I$ be defined as $f(\alpha) = F(\alpha) + \frac{1}{2}\alpha^2 \geq 0$.  If $F$ is continuous, piecewise linear, convex, and $F(0)=0$, then $F(\alpha) \geq 0$.
\end{lemma}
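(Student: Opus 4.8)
The plan is to exploit convexity together with the piecewise-linear structure of $F$ to reduce the claim to a statement about the one-sided slopes of $F$ at the origin. The key observation is that a convex function attains its global minimum precisely where its slope changes sign, so it suffices to show that the left-slope of $F$ at $0$ is nonpositive and the right-slope is nonnegative; convexity then forces $0$ to be a global minimizer and hence $F(\alpha) \geq F(0) = 0$ throughout $I$.

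First I would localize near the origin. Since $F$ is continuous, piecewise linear, and $F(0)=0$, there is some $\delta > 0$ with $F(\alpha) = s_+ \alpha$ on $[0,\delta)$ and $F(\alpha) = s_- \alpha$ on $(-\delta, 0]$, where $s_+$ and $s_-$ are the right- and left-slopes at $0$. I would then feed these expressions into the hypothesis $f(\alpha) = F(\alpha) + \tfrac{1}{2}\alpha^2 \geq 0$. For $\alpha \in (0,\delta)$ this reads $s_+\alpha + \tfrac{1}{2}\alpha^2 \geq 0$; dividing by $\alpha > 0$ and letting $\alpha \to 0^+$ gives $s_+ \geq 0$. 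Symmetrically, for $\alpha \in (-\delta, 0)$ the inequality $s_-\alpha + \tfrac{1}{2}\alpha^2 \geq 0$, after dividing by $\alpha < 0$ and letting $\alpha \to 0^-$, gives $s_- \leq 0$.

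Finally I would invoke convexity to pass from this local sign information to the global bound. Because $F$ is convex and piecewise linear, the slopes of its linear pieces are nondecreasing from left to right. Since the pieces adjacent to $0$ have slopes $s_- \leq 0$ and $s_+ \geq 0$, every piece to the left of $0$ has slope $\leq s_- \leq 0$ and every piece to the right has slope $\geq s_+ \geq 0$. Thus $F$ is nonincreasing on $(a,0]$ and nondecreasing on $[0,b)$, so $F(\alpha) \geq F(0) = 0$ for all $\alpha \in I$.

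The main obstacle is this last step: the purely local information $s_- \leq 0 \leq s_+$ does not by itself prevent $F$ from dipping below zero away from the origin (a non-convex piecewise-linear $F$ could do exactly that), so convexity, via monotonicity of the slopes, is doing the essential work of propagating the sign of $F$ from a neighborhood of $0$ to all of $I$. An alternative, slightly slicker route avoids the case analysis entirely: if $F(\alpha_0) < 0$ for some $\alpha_0 > 0$, convexity with $F(0)=0$ gives $F(t\alpha_0) \leq tF(\alpha_0)$ for $t \in (0,1)$, so $f(t\alpha_0) \leq tF(\alpha_0) + \tfrac{1}{2}t^2\alpha_0^2$, which is negative for small $t$ since the linear term dominates, contradicting $f \geq 0$ (and symmetrically for $\alpha_0 < 0$).
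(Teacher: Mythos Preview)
Your proof is correct and follows essentially the same approach as the paper: show that the one-sided slopes of $F$ at the origin satisfy $s_-\le 0\le s_+$ using $f\ge 0$ near $0$, then invoke convexity (monotonicity of slopes) to conclude $F$ is minimized at $0$. The paper separates the cases where $0$ lies in the interior of a linear piece versus at a breakpoint, whereas you handle both at once via one-sided slopes and the limiting argument; your alternative contradiction argument via $F(t\alpha_0)\le tF(\alpha_0)$ is a nice bonus that does not even require piecewise linearity.
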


\begin{proof}
Let $\alpha_0, \alpha_1, \alpha_2, ..., \alpha_n,\alpha_{n+1} \in I$ such that $a=\alpha_0 < \alpha_1 < \alpha_2 < ... < \alpha_n < \alpha_{n+1}=b$, and $\forall k \in [n+1]$, $F(\alpha) = a_k  \alpha  + b_k$, if $\alpha_{k-1} \leq \alpha \leq \alpha_{k}$.
 We will use two cases to prove the result. \newline
 Case 1: Suppose $0 \in (\alpha_{k-1}, \alpha_{k})$ for some $k \in [n+1]$.  Since $F(0) = 0$, $b_k = 0$.  In addition, $a_k = 0$, which we will show by contradiction.  Suppose $a_k > 0$ and $\alpha_{k-1} < \epsilon < 0$ such that $|\epsilon| < 2a_k$.  Then $f(\epsilon) = a_k \epsilon + \frac{1}{2}\epsilon^2 < 0$, which is a contradiction.  The case in which $a_k < 0$ similarly leads to a contradiction.  Hence $F(\alpha) = 0$ on [$\alpha_{k-1}, \alpha_{k}]$.  Due to the convexity of $F$, $a_1 \leq a_2 \leq \cdots \leq a_n \leq a_{n+1}$. Therefore $F'(\alpha) \leq 0$ for $\alpha \leq \alpha_{k-1}$ and $F'(\alpha) \geq 0$ for $\alpha \geq \alpha_{k}$.  Hence $F(\alpha) \geq 0$. \newline
 Case 2: Suppose $\alpha_k = 0$ for some $k \in [n]$.  Since $F(0)=0$, $F(\alpha) = a_{k} \alpha $ and $f(\alpha) = a_{k} \alpha  + \frac{1}{2}\alpha^2$ on $\alpha_{k-1} \leq \alpha \leq \alpha_{k} = 0$.  We will show that $a_{k} \leq 0$.  Suppose $a_{k} > 0$.  Then $f(\alpha) = \alpha(a_{k} + \frac{1}{2}\alpha) < 0$ on $-2a_{k} < \alpha < 0$, which is a contradiction.  Similarly, $F(\alpha) = \alpha a_{k+1}$ and $f(\alpha) =  \alpha a_{k+1} + \frac{1}{2}\alpha^2$ on $0 = \alpha_{k} \leq \alpha \leq \alpha_{k+1}$.  We will show that $a_{k+1} \geq 0$.  Suppose $a_{k+1} < 0$.  Then $f(\alpha) = \alpha(a_{k+1} + \frac{1}{2}\alpha) < 0$ on $0 < \alpha < 2|a_{k+1}|$, which is a contradiction.  Due to the convexity of $F$, $a_1 \leq a_2 \leq \cdots \leq a_n \leq a_{n+1}$. Therefore $F'(\alpha) \leq a_{k} \leq 0$ for $\alpha \leq \alpha_{k}$ and $F'(\alpha) \geq a_{k+1} \geq 0$ for $\alpha \geq \alpha_{k}$.  Hence $F(\alpha) \geq 0$. 
    
\end{proof}

\begin{lemma}
\label{lem:min}
Let $\mathbf{X} \in \mathbb{R}^{m \times n}$, $\mathbf{W}, \mathbf{Z} \in  \mathbb{R}^{n \times m}$, $\mu > 0$, $\lambda_C \geq 0$, and $\forall i \in [n]$
\[
\mathbf{W}(i,:) = \argmin_{\mathbf{y} \in \mathbb{R}^{m}} 
\left[ \frac{1}{2} \|\mathbf{y} - \frac{1}{\mu}\mathbf{L^T}(i,:) \|^2_2 + \frac{\lambda_C}{2\mu}\|\mathbf{y}\|_{\infty} \right],
\]
where $\mathbf{L(Z)} = \mu \mathbf{Z^T} + \mathbf{XX^TX} - \mathbf{XX^TZ^TX^TX}$.
Then for every $\mathbf{H} \in  \mathbb{R}^{n \times m}$,
\[
\widehat{J}(\mathbf{W+H};\mathbf{Z}) \geq \widehat{J}(\mathbf{W};\mathbf{Z}) + \mu||\mathbf{H}||_F^2.
\]
\end{lemma}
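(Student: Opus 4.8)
The plan is to exploit the row-decoupled form of the surrogate functional derived in Section~\ref{sec:C_min}: with $\mathbf{Z}$ (hence $\mathbf{L}=\mathbf{L(Z)}$) fixed, one can write
\[
\widehat{J}(\mathbf{W};\mathbf{Z}) = 2\mu \sum_{i=1}^n \phi_i(\mathbf{W}(i,:)) + C(\mathbf{Z}),
\]
where $\phi_i(\mathbf{y}) = \tfrac12 \|\mathbf{y} - \tfrac1\mu \mathbf{L^T}(i,:)\|_2^2 + \tfrac{\lambda_C}{2\mu}\|\mathbf{y}\|_\infty$ and $C(\mathbf{Z})$ collects the terms that do not depend on $\mathbf{W}$. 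Because the sum is separable over rows and, by hypothesis, $\mathbf{W}(i,:)$ minimizes $\phi_i$, the whole statement reduces to a per-row strong-convexity estimate: I would show that for each $i$ and each $\mathbf{h}\in\mathbb{R}^m$,
\[
\phi_i(\mathbf{W}(i,:) + \mathbf{h}) \geq \phi_i(\mathbf{W}(i,:)) + \tfrac12 \|\mathbf{h}\|_2^2 .
\]
Taking $\mathbf{h} = \mathbf{H}(i,:)$, summing over $i$, multiplying by $2\mu$, and re-inserting $C(\mathbf{Z})$ then produces $\widehat{J}(\mathbf{W}+\mathbf{H};\mathbf{Z}) \geq \widehat{J}(\mathbf{W};\mathbf{Z}) + \mu\|\mathbf{H}\|_F^2$, since $2\mu\cdot\tfrac12\sum_i\|\mathbf{H}(i,:)\|_2^2 = \mu\|\mathbf{H}\|_F^2$.

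To prove the per-row estimate I would reduce to one variable. Fixing $i$ and a direction $\mathbf{h}\neq\mathbf{0}$ (the case $\mathbf{h}=\mathbf{0}$ is trivial), set $\mathbf{y}^\ast=\mathbf{W}(i,:)$ and consider $\psi(t)=\phi_i(\mathbf{y}^\ast+t\mathbf{h})$. Expanding the quadratic term exactly gives $\psi(t) = \bigl[\,\mathrm{const} + t\,\langle \mathbf{y}^\ast-\tfrac1\mu\mathbf{L^T}(i,:),\,\mathbf{h}\rangle + \tfrac{\lambda_C}{2\mu}\|\mathbf{y}^\ast+t\mathbf{h}\|_\infty\,\bigr] + \tfrac{t^2}{2}\|\mathbf{h}\|_2^2$, in which the bracketed part is continuous and piecewise linear in $t$ on any bounded interval (a linear term plus a maximum of finitely many affine functions). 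After the rescaling $\alpha=t\|\mathbf{h}\|_2$, the function $f(\alpha) := \psi(t)-\psi(0)$ takes exactly the form $f(\alpha)=F(\alpha)+\tfrac12\alpha^2$ required by Lemma~\ref{lem:F_greater_0}, with $F$ the piecewise-linear part normalized so that $F(0)=0$, on a bounded open interval containing $0$ and $\|\mathbf{h}\|_2$. Since $\mathbf{y}^\ast$ minimizes $\phi_i$, $t=0$ minimizes $\psi$, so $f(\alpha)\geq 0$; convexity of $\phi_i$ makes $\psi$ and hence $f$ convex, and Lemma~\ref{lemma:conv} then yields convexity of the piecewise-linear part $F$. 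With $F$ continuous, piecewise linear, convex, $F(0)=0$, and $f\geq 0$, Lemma~\ref{lem:F_greater_0} gives $F(\alpha)\geq 0$, hence $f(\alpha)\geq\tfrac12\alpha^2$. Evaluating at $t=1$, i.e. $\alpha=\|\mathbf{h}\|_2$, gives $\phi_i(\mathbf{y}^\ast+\mathbf{h})-\phi_i(\mathbf{y}^\ast)\geq\tfrac12\|\mathbf{h}\|_2^2$, which is the desired per-row bound.

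The main obstacle is exactly this per-row estimate, because the $\ell_\infty$ penalty is nonsmooth at the minimizer, so a direct second-order or subgradient strong-convexity argument is awkward; the one-dimensional reduction together with Lemmas~\ref{lemma:conv} and~\ref{lem:F_greater_0} is the device that converts the qualitative statement ``$\psi(t)\geq\psi(0)$ for a convex quadratic-plus-piecewise-linear $\psi$'' into the quantitative gap $\tfrac12\|\mathbf{h}\|_2^2$. The only remaining care is bookkeeping: verifying that $C(\mathbf{Z})$ is genuinely independent of $\mathbf{W}$ (it depends on $\mathbf{Z}$ only through $\mathbf{L}$), and tracking the rescaling so that the factor $2\mu$ produces the additive $\mu\|\mathbf{H}\|_F^2$ with the correct constant. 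The separable expansion and the final summation are then routine.
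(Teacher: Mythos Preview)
Your proposal is correct and follows essentially the same route as the paper: both arguments reduce to the per-row inequality $F(\mathbf{h})=\tfrac{\lambda_C}{2\mu}(\|\mathbf{w}+\mathbf{h}\|_\infty-\|\mathbf{w}\|_\infty)+\langle \mathbf{h},\mathbf{w}-\tfrac1\mu\boldsymbol\ell\rangle\geq 0$, pass to a one-dimensional section $\mathbf{h}=\alpha\mathbf{u}$ with $\mathbf{u}$ a unit vector, and then invoke Lemmas~\ref{lemma:conv} and~\ref{lem:F_greater_0} exactly as you describe. The only cosmetic difference is that the paper expands $\widehat{J}(\mathbf{W}+\mathbf{H};\mathbf{Z})$ directly to isolate the $\mu\|\mathbf{H}\|_F^2$ term, whereas you first cite the row-decoupled form $\widehat{J}=2\mu\sum_i\phi_i+C(\mathbf{Z})$ and phrase the key estimate as $1$-strong convexity of $\phi_i$ at its minimizer; these are the same computation.
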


\begin{proof}
By definition, 
\begin{equation*}
\begin{split}
& \widehat{J}(\mathbf{W+H}, \mathbf{Z})  = ||\mathbf{X-X(W+H)X}||_F^2 + \lambda_C \sum_{i=1}^n \| (\mathbf{W+H})(i,:) \|_{\infty} \\ 
& \quad \quad + \mu ||\mathbf{W+H}-\mathbf{Z}||_F^2 - ||\mathbf{X(W+H)X}-\mathbf{XZX}||_F^2 \\
& = \widehat{J}(\mathbf{W}, \mathbf{Z}) + \lambda_C \sum_{i=1}^n \left(||(\mathbf{W+H})(i,:)||_\infty - ||\mathbf{W}(i,:)||_\infty \right)  + 2 \tr\{\mathbf{H}(\mu \mathbf{W^T - L})\} + \mu ||\mathbf{H}||_F^2 \\
& = \widehat{J}(\mathbf{W}, \mathbf{Z}) + \sum_{i=1}^n \left[ \lambda_C(||(\mathbf{W+H})(i,:)||_\infty - ||\mathbf{W}(i,:)||_\infty) + 2\mu \left\langle \mathbf{H}(i,:), (\mathbf{W} - \frac{1}{\mu}\mathbf{L^T})(i,:) \right\rangle \right] \\
& \quad \quad + \mu ||\mathbf{H}||_F^2. 
\end{split}
\end{equation*}
To prove the result we need to show that
\[
\sum_{i=1}^n \left[ \lambda_C(||(\mathbf{W+H})(i,:)||_\infty - ||\mathbf{W}(i,:)||_\infty) + 2\mu \left\langle \mathbf{H}(i,:), (\mathbf{W} - \frac{1}{\mu}\mathbf{L^T})(i,:) \right\rangle \right] \geq 0.
\]
Hence, it suffices to show that $\forall i \in [n]$,
\begin{equation}
\label{pf:F}
\frac{\lambda_C}{2\mu} (||(\mathbf{W+H})(i,:)||_\infty - ||\mathbf{W}(i,:)||_\infty) + \left\langle \mathbf{H}(i,:), (\mathbf{W} - \frac{1}{\mu}\mathbf{L^T})(i,:) \right\rangle \geq 0.
\end{equation}

\noindent To simplify notation, let $\mathbf{w} = \mathbf{W}(i,:)$, $\mathbf{h} = \mathbf{H}(i,:)$ and $\boldsymbol\ell = \mathbf{L^T}(i,:)$.  Let
\[
F(\mathbf{h}) = \frac{\lambda_C}{2\mu} (||\mathbf{w+h}||_\infty - ||\mathbf{w}||_\infty) + \left\langle \mathbf{h}, \mathbf{w} - \frac{1}{\mu}\boldsymbol  \ell \right\rangle, 
\]
where $\mathbf{h} = \alpha \mathbf{u}$ and $\mathbf{u} \in \mathbb{R}^{m}$ is a random unit vector.  Then we can denote $F(\mathbf{h}) = F(\alpha, \mathbf{u})$, and fix a $\mathbf{u}$ so that $F$ is only a function of $\alpha$: 
\[
F(\alpha) = \frac{\lambda_C}{2\mu} (||\mathbf{w+\alpha\mathbf{u}}||_\infty - ||\mathbf{w}||_\infty) + \alpha \left\langle \mathbf{u}, \mathbf{w} - \frac{1}{\mu}\boldsymbol \ell \right\rangle. 
\]
Let $G(\mathbf{w}) = \frac{1}{2} ||\mathbf{w} - \frac{1}{\mu} \boldsymbol \ell||^2_2 + \frac{\lambda_C}{2\mu}||\mathbf{w}||_{\infty}$. Then
\[
G(\mathbf{w} + \alpha \mathbf{u}) = \frac{1}{2}||\mathbf{w} + \alpha \mathbf{u} - \frac{1}{\mu} \boldsymbol \ell||^2_2 + \frac{\lambda_C}{2\mu}||\mathbf{w} + \alpha \mathbf{u}||_{\infty},
\]
and
\begin{equation*}
\begin{split}
G(\mathbf{w} + \alpha \mathbf{u}) - G(\mathbf{w}) & = \frac{1}{2}||\mathbf{w} + \alpha \mathbf{u} - \frac{1}{\mu} \boldsymbol \ell||^2_2 - \frac{1}{2}||\mathbf{w} - \frac{1}{\mu} \boldsymbol \ell||^2_2 + \frac{\lambda_C}{2\mu}||\mathbf{w} + \alpha \mathbf{u}||_{\infty} - \frac{\lambda_C}{2\mu}||\mathbf{w}||_{\infty} \\
& = \frac{1}{2} \left( ||\mathbf{w} - \frac{1}{\mu} \boldsymbol \ell||^2_2 + 2 \left\langle \alpha \mathbf{u}, \mathbf{w} - \frac{1}{\mu} \boldsymbol \ell \right\rangle + ||\alpha \mathbf{u}||^2_2 - ||\mathbf{w} - \frac{1}{\mu} \boldsymbol \ell||^2_2 \right) \\ 
& \quad + \frac{\lambda_C}{2\mu}(||\mathbf{w} + \alpha \mathbf{u}||_{\infty} - ||\mathbf{w}||_{\infty}) \\
& = \frac{\lambda_C}{2\mu}(||\mathbf{w} + \alpha \mathbf{u}||_{\infty} - ||\mathbf{w}||_{\infty}) + \alpha \left\langle \mathbf{u}, \mathbf{w} - \frac{1}{\mu} \boldsymbol \ell \right\rangle + \frac{1}{2}\alpha^2.
\end{split}
\end{equation*}

\noindent Thus, $F(\alpha) = G(\mathbf{w} + \alpha \mathbf{u}) - G(\mathbf{w}) - \frac{1}{2}\alpha^2$.  Let  $f(\alpha) = F(\alpha) + \frac{1}{2}\alpha^2$.  We note that $f$ is convex since $f(\alpha) = G(\mathbf{w} + \alpha \mathbf{u}) - G(\mathbf{w})$.  In order to use Lemma \ref{lemma:conv}, we also need to show that $F(\alpha)$ is piecewise linear in $\alpha$.  There is a constant term of $F(\alpha)$, $ \frac{-\lambda_C}{2\mu}|| \mathbf{w}||_\infty$, and a linear term, $\alpha \langle \mathbf{u}, \mathbf{w} - \frac{1}{\mu}\boldsymbol  \ell \rangle$.  The remaining term, $\frac{\lambda_C}{2\mu} ||\mathbf{w+\alpha\mathbf{u}}||_\infty$ is piecewise linear in $\alpha$, since as $\alpha$ increases
\[
||\mathbf{w} + \alpha \mathbf{u}||_\infty = \max(\mathbf{w}_1 + \alpha \mathbf{u}_1, ... \:, \mathbf{w}_m + \alpha \mathbf{u}_m, -\mathbf{w}_1 - \alpha \mathbf{u}_1, ... \:, -\mathbf{w}_m - \alpha \mathbf{u}_m), 
\]
and the maximum of a set of linear functions is piecewise linear.  Thus, $F(\alpha)$ is piecewise linear, and by Lemma \ref{lemma:conv}, $F(\alpha)$ is convex.  

The remaining step is to show that $F(\alpha) \geq 0$, which will establish the claim in Equation \ref{pf:F} and thus the result. 
 Since $\mathbf{w} = \argmin_{\mathbf{y}} G(\mathbf{y})$, we have $f(\alpha) = G(\mathbf{w} + \alpha \mathbf{u}) - G(\mathbf{w}) \geq 0$. We also know that $F(\alpha)$ is continuous and piecewise linear in $\alpha$, convex, and $F(0)=0$.  Hence by Lemma \ref{lem:F_greater_0}, $F(\alpha) \geq 0$.

\end{proof}

Lemma \ref{lem:min} is applied with $\mathbf{W}$ equal to a fixed point of $\mathbf{T}$ to prove Theorem \ref{thm:alg2_correct}, part b.  However, the proof mirrors that of \cite{daubechies_etal_2004}, so is omitted.  To complete the proof of Theorem \ref{thm:alg2_correct}, we provide the proof of part c.

\begin{proof}[Proof of Theorem \ref{thm:alg2_correct}, part c.]
The minimizer of $J(\mathbf{W})$ is unique if $\| \mathbf{X} - \mathbf{XWX} \|_F^2$ is strictly convex in $\mathbf{W}$.  Since
\[
\| \mathbf{X} - \mathbf{XWX} \|_F^2 = ||\mathbf{(X^T \otimes X) w}-\mathbf{b}||^2_2, 
\]
where $\mathbf{b} = \text{vec}(\mathbf{X}) \in \mathbb{R}^{mn}$ and $\mathbf{w} = \text{vec}(\mathbf{W}) \in \mathbb{R}^{mn}$, we need to guarantee that $\mathbf{(X^T \otimes X)}$ has full rank.  Since $\text{rank}\mathbf{(X^T \otimes X)} = \text{rank}(\mathbf{X}^T)\text{rank}(\mathbf{X}) = (\text{rank}(\mathbf{X}))^2$, $\mathbf{(X^T \otimes X)}$ has full rank if $\mathbf{X}$ is square and full rank, proving the result.    
\end{proof}

\section{Numerical Experiments}
\label{sec:exp}

We demonstrate our CUR algorithm on two datasets: (1) a document-term matrix, and (2) a gene expression dataset.  CUR has been previously applied to these types of datasets, e.g., \cite{mahoney_drineas_2009, drineas_etal_2008, sorenson_embree_2016} for document-term matrices and \cite{mahoney_drineas_2009, mairal_etal_2011} for gene expression data.  We compare performance of our CUR algorithm to that of (1) the leverage score CUR \cite{mahoney_drineas_2009} (the randomized version and deterministic variant), (2) the DEIM CUR \cite{sorenson_embree_2016}, (3) the QR CUR variant described in \cite{sorenson_embree_2016}, and (4) the low-rank SVD.  In the remainder of this paper, we denote our CUR algorithm as SF CUR (for surrogate functional CUR), and the deterministic (randomized) leverage score CUR as LS-D (LS-R) CUR.  For a brief summary of these methods, see Table \ref{tab:cur_summ}, and for more details see Section \ref{sec:cur_rel_work}.  While each CUR method selects $\mathbf{C}$ and $\mathbf{R}$ separately and allows the user to select $c$ and $r$; the LS-R CUR is not deterministic.  We include results from the LS-R CUR in comparisons of accuracy and computation time since this method is often compared to in the literature but exclude it from feature selection performance experiments.  Comparisons with the SVD are included as a baseline.

\begin{table}[h]
    \centering
    \rowcolors{2}{gray!25}{white}
    \resizebox{\textwidth}{!}{\begin{tabular}{|l|c|p{0.6\textwidth}|}
        \hline
        \textbf{Method} & \textbf{Complexity} & \textbf{Computation of $\mathbf{C}$ and $\mathbf{R}$} \\
        \hline
        LS-R CUR & $O(k'mn) + O(rmn)$ & columns (rows) sampled from a probability distribution based on leverage scores computed from the leading $k'$ right (left) singular vectors. $k'$ is a rank parameter. \\
        LS-D CUR & $O(k'mn) + O(rmn)$ & columns (rows) with largest leverage scores computed from the leading $k'$ right (left) singular vectors. $k'$ is a rank parameter. \\
        DEIM CUR & $O(kmn)$ & columns (rows) chosen using DEIM point selection algorithm on the leading $k$ right (left) singular vectors.  \\
        QR CUR & $O(mn^2)$ & columns (rows) chosen using pivoted QR of $\mathbf{X}$ ($\mathbf{C}^T$). \\
        rank-$k$ SVD & $O(kmn)$ & - \\
        \hline
    \end{tabular}}
    \caption{Summary of methods that we compare to SF CUR in this section.  The complexity given is for $\mathbf{X} \in \mathbb{R}^{m \times n}$, letting $c$ be the number of columns in $\mathbf{C}$, $r$ the number of rows of $\mathbf{R}$, and $c=r=k$.  We assume that $m < n$ and $c, r \leq m$. For each CUR method, $\mathbf{U = C^+XR^+}$.  }
    \label{tab:cur_summ}
\end{table}

Experiments were performed in MATLAB R2023b on the University of Virginia's High-Performance Computing system, Rivanna.  We used one (CPU) node, using 8 cores of an Intel(R) Xeon(R) Gold 6248 CPU at 2.50GHz, and 72GB of RAM.  Code for all experiments performed in this work is provided at \url{https://github.com/klinehan1/cur_feature_selection}.

\subsection{Document-Term Matrix}
\label{sec:dtm}

This first experiment serves to compare the accuracy and computation time of the SF CUR with that of other CUR algorithms and the SVD on a document-term matrix, $\mathbf{T} \in \mathbb{R}^{2,389 \times 21,238}$. Accuracy is given by the relative error in the Frobenius norm of each approximation, e.g., $\| \mathbf{T} -\mathbf{CUR}\|_F/\|\mathbf{T}\|_F$. $\mathbf{T}$ is sparse with $0.23\%$ non-zero entries and was downloaded and created from the 20 Newsgroups dataset \cite{data_20news} using the scikit-learn package \cite{scikit_2011}.  The documents include the training set documents for the four recreation categories; headers, footers, quotes, and a list of English stop words were removed from the text.  The documents were vectorized using TFIDF and the resulting matrix rows were normalized using the $\ell_2$ norm\footnote{The data was downloaded using the function sklearn.datasets.fetch\_20newsgroups and vectorized using sklearn.feature\_extraction.text.TfidfVectorizer. The processing was completed as described using options in these functions.  The stop word list used was the built-in list provided in scikit-learn, and the TFIDF calculations were based on the default parameter settings.}.  $\mathbf{T}$ is a rank-deficient matrix; $\text{rank}(\mathbf{T})=2,295$.    
 
Figure \ref{fig:dtm_acc} presents the relative error and Figure \ref{fig:dtm_t} presents the computation time for each CUR approximation in which $c=r$ and $c,r$ vary over $\{200, 400, ..., 2{,}200, 2{,}295\}$, and for the rank-$k$ SVD in which $k=c=r$.  Since the LS-R CUR is randomized, we ran five experiments for each value of $c,r$ and reported the average relative error and time with the standard deviations given by error bars. We note that due to sampling, the LS-R CUR may have chosen a number of columns and/or rows slightly more or less than $c$ and/or $r$. For both LS-D and LS-R CUR, the rank parameter for leverage score computation was 10. 

\begin{figure}[h]
    \centering
    \includegraphics[scale=0.7]{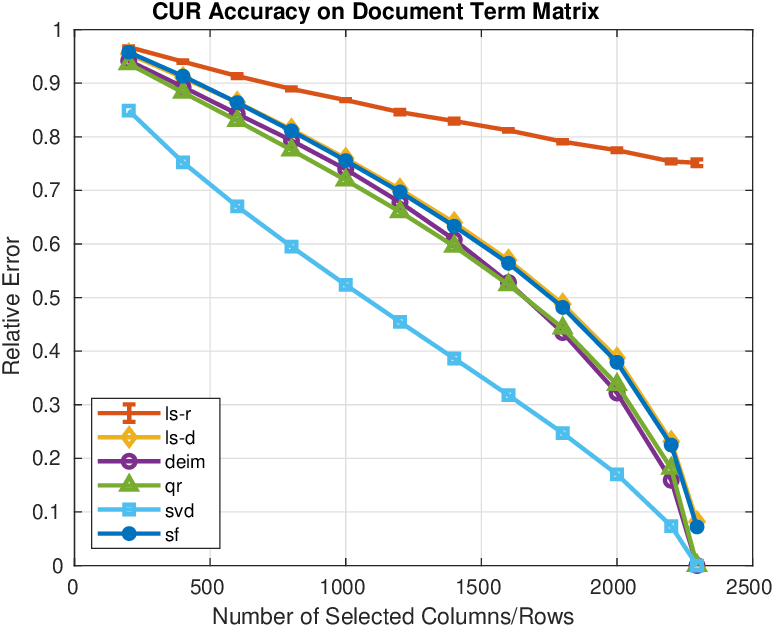}
    \caption{Relative error of CUR approximations and the rank-$k$ SVD on a document-term matrix.  The rank of the SVD approximation is the same as the number of selected columns/rows for the CUR approximations. Legend: ls-r(d), deim, qr, sf - corresponding CUR algorithms.}
    \label{fig:dtm_acc}
\end{figure}

\begin{figure}[h]
    \centering
    \includegraphics[scale=0.7]{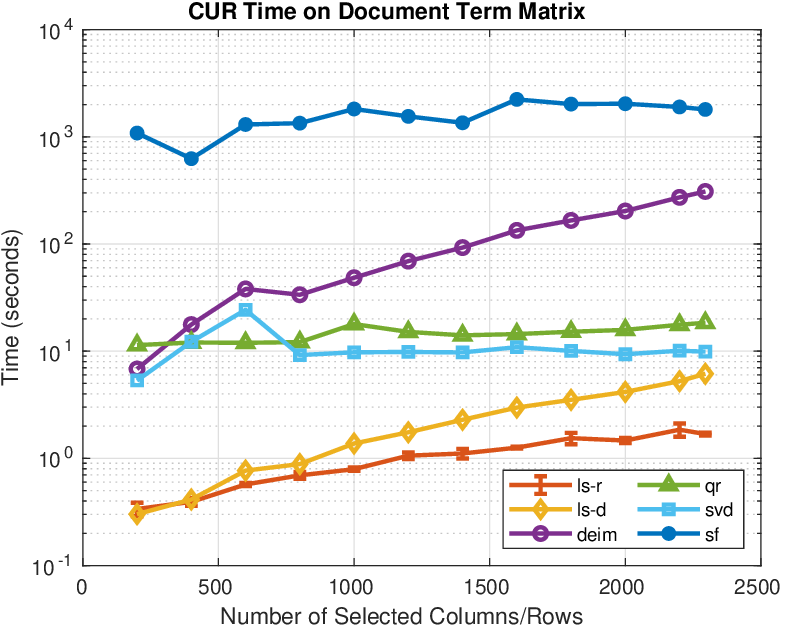}
    \caption{Computation time of CUR approximations and the rank-$k$ SVD on a document-term matrix.  The rank of the SVD approximation is the same as the number of selected columns/rows for the CUR approximations. Legend: ls-r(d), deim, qr, sf - corresponding CUR algorithms.}
    \label{fig:dtm_t}
\end{figure}

In general, the SF CUR and LS-D CUR achieve similar relative errors, as do the DEIM CUR and QR CUR, which achieve lower relative errors than those of the SF CUR and LS-D CUR.  However, for $c,r \geq 400$, the DEIM CUR has greater computation time than the QR CUR.  For $c,r \geq 1,600$, the DEIM CUR has computation times larger than 100 seconds as compared to computation times of less than 20 seconds for all values of $c,r$ for the QR CUR.  The LS-R CUR has the smallest computation time for $c,r \geq 400$, but does not perform well in relative error as $c,r$ increase.  Clearly, the SVD achieves the lowest relative error and has computation times generally about 10 seconds.  The LS-D CUR is relatively fast with computation times less than those of the SF CUR, DEIM CUR, QR CUR, and SVD.  The SF CUR is the slowest of the algorithms with computation times that are generally larger than 1,000 seconds (16.67 minutes).  While the SF CUR is the most expensive algorithm in terms of computation time, we will demonstrate its effectiveness as a feature selection method in the next experiment. 

\subsection{Gene Expression Data}
We compare the relative error and feature selection performance of the SF CUR algorithm with that of other complementary CUR algorithms and the SVD on the National Institutes of Health (NIH) gene expression dataset, GSE10072 \cite{data_gene_exp}. We repeat and extend the experiment of Sorenson and Embree \cite{sorenson_embree_2016} who compared the performance of the DEIM CUR and the LS-D CUR on this dataset by (1) calculating the error of each CUR approximation for varying values of $c,r$, and (2) assessing if the top 15 probes selected by each CUR algorithm separate the patients into those with and without a tumor. We extend this experiment by adding (1) relative error results for the SF CUR, QR CUR, and SVD, (2) computation times for each matrix approximation, (3) metrics to compare the overall probe selection of each matrix approximation method, and (4) results when selecting the top 5, 10, ..., 100 probes.  We also include relative error and computation times for the LS-R CUR on this dataset, but exclude it from the probe selection comparison since it is not deterministic.

The GSE10072 dataset, $\mathbf{G} \in \mathbb{R}^{22,283 \times 107}$, contains gene expression data for 107 patients, of which 58 have a lung tumor and 49 do not.  All entries of $\mathbf{G}$ are positive and larger entries represent a greater reaction to a probe. Each row of $\mathbf{G}$ is centered using its mean.  We approximate $\mathbf{G}^T$ (so that probes, i.e., columns, are selected first in the SF algorithm) and again use the Frobenius norm for the relative error calculation (instead of $\|\mathbf{G - CUR} \|_2$ as in \cite{sorenson_embree_2016}).  To assess how well a probe separates the patients into two classes, the number of patients in each class with a (mean-centered) entry in $\mathbf{G}^T$ greater than one for that probe are counted. As mentioned in \cite{sorenson_embree_2016}, there are 23 probes for which at least 30 patients with a tumor have an entry greater than one, and 95 probes for which at least 30 patients without a tumor have an entry greater than one. No probe is included in both of these sets. 

Figure \ref{fig:gene_acc} presents the relative error and Figure \ref{fig:gene_t} presents the computation time for each CUR and SVD approximation. Values of $c,r$ vary over $\{5,10,...,105, 107 \}$ and for each CUR approximation $c=r$. For the rank-$k$ SVD, $k=c=r$.  We report the average relative error and computation time over five runs for the LS-R CUR along with the corresponding standard deviations.  For both LS-D and LS-R CUR, the rank parameter for leverage score computation was 2.  The SF CUR and LS-D CUR have similar relative errors for all values of $c,r$; however, the SF CUR generally takes about 5 seconds to compute whereas the LS-D CUR generally takes about 0.1 seconds.  The QR CUR achieves lower relative error than the SF CUR for $c,r \geq 20$, and the DEIM CUR achieves the lowest relative error of the CUR approximations for every $c,r$ value.  The LS-R CUR has an average relative error lower than that of the SF CUR for $c,r \leq 65$. The SF CUR takes the longest to compute, while the other methods have relatively similar computation times under 0.5 seconds.  These trends are fairly similar to the relative error and computation time trends seen in the previous experiments of Section \ref{sec:dtm}.

\begin{figure}
    \centering
    \includegraphics[scale=0.7]{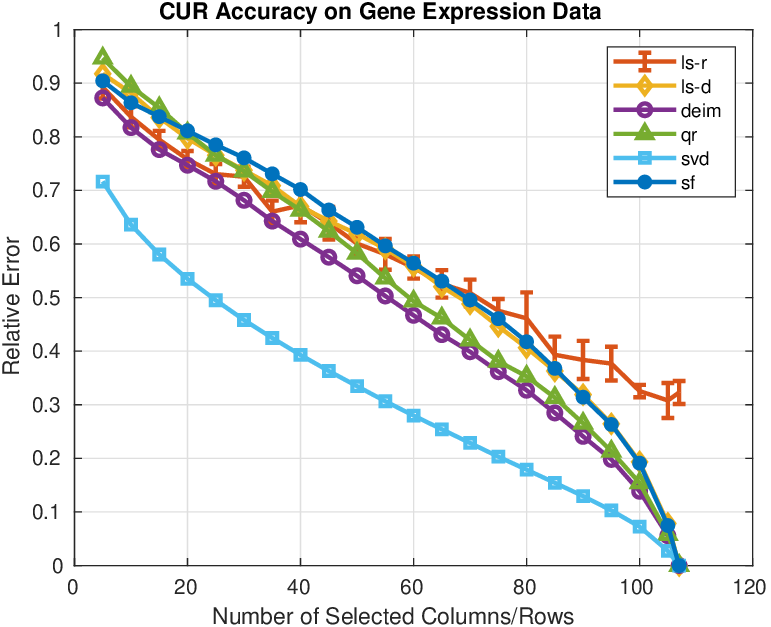}
    \caption{Relative error of CUR approximations and the rank-$k$ SVD on gene expression data.  The rank of the SVD approximation is the same as the number of selected columns/rows for the CUR approximations. Legend: ls-r(d), deim, qr, sf - corresponding CUR algorithms.}
    \label{fig:gene_acc}
\end{figure}

\begin{figure}
    \centering
    \includegraphics[scale=0.7]{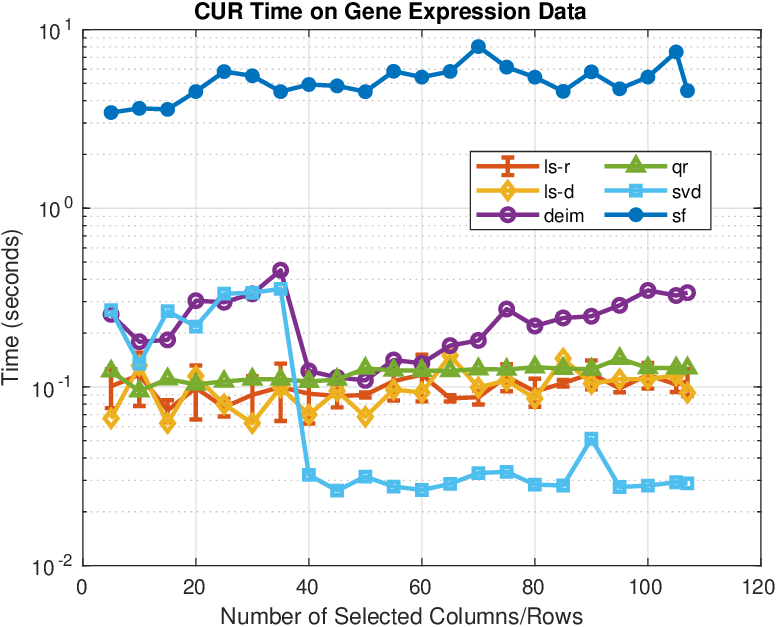}
    \caption{Time of CUR approximations and the rank-$k$ SVD on gene expression data.  The rank of the SVD approximation is the same as the number of selected columns/rows for the CUR approximations. Legend: ls-r(d), deim, qr, sf - corresponding CUR algorithms.}
    \label{fig:gene_t}
\end{figure}   

Next, we determine probe selection performance for each CUR and SVD approximation.  For each CUR approximation, we set $c$ to the corresponding number of probes, i.e., 5, 10, ..., 100, and report the selected probes (i.e., columns of $\mathbf{G}^T$).  For the rank-$k$ SVD, we perform PCA using a rank-2 SVD since the two classes (tumor and no tumor) are separated well when the data is projected onto the leading two principal axes \cite{sorenson_embree_2016}\footnote{Sorenson and Embree cite A. Kundu, S. Nambirajan, and P. Drineas [\textit{Identifying inﬂuential entries in a matrix},
preprint arXiv:1310.3556 [cs.nA], 2013] for this result; however, this paper was withdrawn from the arXiv.}, and then select the $c$ probes that have the largest correlation (in absolute value) with either the first or second principal component.  In order to compare the probe selection performance of the five methods, we compute the absolute value of the difference between the number of entries greater than one in $\mathbf{G}^T$ for patients with and without a tumor for each selected probe in each method.  See Table \ref{tab:ps_15} for an example of probe selection results using $c=15$\footnote{For this particular example, SF CUR and LS-D CUR returned the exact same set of 15 probes, hence these results are reported together in Table \ref{tab:ps_15}a.}. To quantify performance of each method, we calculate the median and mean, and standard deviation of the $c$ differences, reported in Tables \ref{tab:ps} and \ref{tab:ps_sd}, respectively. 

\begin{table}[H]
    \begin{subtable}{0.5\linewidth}
    \centering
    \small
    \rowcolors{4}{gray!25}{white}
    \begin{tabular}{|l|c|c|c|}
        \hline
        & \multicolumn{3}{c|}{\textbf{\# of Entries $>1$ in $\mathbf{G}^T$}} \\ 
        \cline{2-4}
         \textbf{Probe} & \textbf{No Tumor} & \textbf{Tumor} & $|$\textbf{Diff}$|$ \\
        \hline
        210081\_at & 45 & 2 & 43 \\
        214387\_x\_at & 48 & 6 & 42 \\
        211735\_x\_at & 48 & 5 & 43 \\
        209875\_s\_at & 2 & 50 & 48 \\
        205982\_x\_at & 48 & 5 & 43 \\
        209613\_s\_at & 47 & 2 & 45 \\
        215454\_x\_at & 46 & 0 & 46 \\
        210096\_at & 44 & 6 & 38 \\
        204712\_at & 43 & 5 & 38 \\
        203980\_at & 44 & 2 & 42 \\
        219230\_at & 38 & 2 & 36 \\
        209612\_s\_at & 46 & 2 & 44 \\
        214135\_at & 47 & 3 & 44 \\
        205866\_at & 39 & 0 & 39 \\
        205200\_at & 39 & 0 & 39 \\
        \hline
    \end{tabular}
    \caption{SF CUR and LS-D CUR.}
\end{subtable}%
    \begin{subtable}{0.5\linewidth}
    \centering
    \small
    \rowcolors{4}{gray!25}{white}
    \begin{tabular}{|l|c|c|c|}
        \hline
        & \multicolumn{3}{c|}{\textbf{\# of Entries $>1$ in $\mathbf{G}^T$}} \\ 
        \cline{2-4}
         \textbf{Probe} & \textbf{No Tumor} & \textbf{Tumor} & $|$\textbf{Diff}$|$ \\
        \hline
        210081\_at & 45 & 2 & 43 \\
        214895\_s\_at & 3 & 8 & 5 \\
        209156\_s\_at & 6 & 5 & 1 \\
        211653\_x\_at & 1 & 18 & 17 \\
        214777\_at & 3 & 27 & 24 \\
        219612\_s\_at & 17 & 17 & 0 \\
        204304\_s\_at & 4 & 16 & 12 \\
        203824\_at & 4 & 17 & 13 \\
        204748\_at & 14 & 18 & 4 \\
        201909\_at & 34 & 21 & 13 \\
        214774\_x\_at & 0 & 34 & 34 \\
        211074\_at & 4 & 7 & 3 \\
        210096\_at & 44 & 6 & 38 \\
        204475\_at & 0 & 27 & 27 \\
        214612\_x\_at & 0 & 16 & 16 \\ 
        \hline
    \end{tabular}
    \caption{DEIM CUR.}
    \end{subtable} \\ 
    \newline \quad \newline
    \begin{subtable}{0.5\linewidth}
    \centering
    \small
    \rowcolors{4}{gray!25}{white}
    \begin{tabular}{|l|c|c|c|}
        \hline
        & \multicolumn{3}{c|}{\textbf{\# of Entries $>1$ in $\mathbf{G}^T$}} \\ 
        \cline{2-4}
         \textbf{Probe} & \textbf{No Tumor} & \textbf{Tumor} & $|$\textbf{Diff}$|$ \\
        \hline
        214387\_x\_at & 48 & 6 & 42 \\
        201909\_at & 34 & 21 & 13 \\
        206239\_s\_at & 2 & 30 & 28 \\
        205725\_at & 33 & 7 & 26 \\
        219612\_s\_at & 17 & 17 & 0 \\
        203290\_at & 29 & 19 & 10 \\
        213831\_at & 28 & 18 & 10\\
        213674\_x\_at & 8 & 17 & 9 \\
        204475\_at & 0 & 27 & 27 \\
        201884\_at & 0 & 30 & 30 \\
        209278\_s\_at & 0 & 21 & 21 \\
        204885\_s\_at & 12 & 18 & 6 \\
        207430\_s\_at & 6 & 9 & 3 \\
        205476\_at & 11 & 16 & 5 \\
        209309\_at & 7 & 17 & 10 \\
        \hline
    \end{tabular}
    \caption{QR CUR.}
    \end{subtable}%
        \begin{subtable}{0.5\linewidth}
    \centering
    \small
    \rowcolors{4}{gray!25}{white}
    \begin{tabular}{|l|c|c|c|}
        \hline
        & \multicolumn{3}{c|}{\textbf{\# of Entries $>1$ in $\mathbf{G}^T$}} \\ 
        \cline{2-4}
         \textbf{Probe} & \textbf{No Tumor} & \textbf{Tumor} & $|$\textbf{Diff}$|$ \\
        \hline
        204931\_at & 36 & 0 & 36 \\
        206702\_at & 34 & 0 & 34 \\
        201540\_at & 43 & 0 & 43 \\
        209074\_s\_at & 45 & 1 & 44 \\
        206742\_at & 38 & 0 & 38 \\
        205200\_at & 39 & 0 & 39 \\
        219213\_at & 19 & 0 & 19 \\
        204894\_s\_at & 29 & 0 & 29 \\
        213103\_at & 5 & 0 & 5 \\
        206209\_s\_at & 36 & 0 & 36 \\
        219719\_at & 13 & 0 & 13 \\
        204719\_at & 42 & 0 & 42 \\
        208981\_at & 22 & 0 & 22 \\
        210081\_at & 45 & 2 & 43 \\
        204396\_s\_at & 36 & 0 & 36 \\
        \hline
    \end{tabular}
    \caption{SVD.}
    \end{subtable} 
    \caption{Probe selection results for $c=15$. Probes are listed in ranked order (1 to 15) for LS-D CUR, DEIM CUR, QR CUR and SVD methods since these methods return selected columns in a ranked order.}
    \label{tab:ps_15}
\end{table}

\begin{table}[h]
    \small
    \rowcolors{8}{gray!25}{white}
    \resizebox{\textwidth}{!}{\begin{tabular}{|c|cc|cc|cc|cc|cc|}
        \hline
        & \multicolumn{2}{c|}{\textbf{SF}} & \multicolumn{2}{c|}{\textbf{LS-D}} & \multicolumn{2}{c|}{\textbf{DEIM}} & \multicolumn{2}{c|}{\textbf{QR}} & 
        \multicolumn{2}{c|}{\textbf{SVD}} \\
        \cline{2-3} \cline{4-5} \cline{6-7} \cline{8-9} \cline{10-11}%
        \textbf{\# of Probes} & \textbf{mdn} & \textbf{mean} & \textbf{mdn} & \textbf{mean} & \textbf{mdn} & \textbf{mean} & \textbf{mdn} & \textbf{mean} & \textbf{mdn} & \textbf{mean} \\
        \hline
5 & \textbf{43} & \textbf{43.80} & \textbf{43} & \textbf{43.80} & 17 & 18 & 26 & 21.80 & 38 & 39 \\
10 & \textbf{43} & \textbf{42.80} & \textbf{43} & \textbf{42.80} & 12.50 & 13.20 & 19.50 & 19.50 & 36 & 32.30 \\
15 & \textbf{43} & \textbf{42} & \textbf{43} & \textbf{42} & 13 & 16.67 & 10 & 16 & 36 & 31.93 \\
20 & \textbf{42.50} & 41.75 & \textbf{42.50} & \textbf{42.10} & 13 & 15.80 & 10 & 14.55 & 36 & 31.95 \\
25 & \textbf{42} & \textbf{40.68} & \textbf{42} & 40.32 & 14 & 16.28 & 13 & 15.56 & 34 & 31.32 \\
30 & \textbf{40} & 39.20 & \textbf{40} & \textbf{39.30} & 13.50 & 15.57 & 13 & 15.87 & 34 & 30.67 \\
35 & \textbf{39} & 38.66 & \textbf{39} & \textbf{38.69} & 14 & 15.86 & 13 & 16.40 & 31 & 29.83 \\
40 & \textbf{38} & 38.28 & \textbf{38} & \textbf{38.30} & 14.50 & 16.15 & 16.50 & 17.25 & 30 & 29.43 \\
45 & \textbf{38} & \textbf{38.16} & \textbf{38} & 38.11 & 16 & 16.89 & 17 & 17.73 & 31 & 30.02 \\
50 & \textbf{37} & \textbf{37.94} & 36.50 & 37.76 & 15.50 & 15.92 & 16.50 & 16.96 & 30 & 29.36 \\
55 & \textbf{36} & \textbf{37.42} & \textbf{36} & 37.31 & 14 & 15.02 & 16 & 16.62 & 29 & 28.49 \\
60 & \textbf{36} & 37.12 & \textbf{36} & \textbf{36.78} & 14.50 & 14.93 & 14.50 & 16.10 & 29 & 27.63 \\
65 & \textbf{36} & \textbf{36.69} & \textbf{36} & 36.52 & 14 & 14.65 & 16 & 16 & 29 & 27.55 \\
70 & \textbf{36} & \textbf{36.31} & \textbf{36} & 36.17 & 15.50 & 15.01 & 16 & 15.89 & 28 & 27 \\
75 & \textbf{36} & \textbf{36.13} & \textbf{36} & 35.95 & 14 & 15.05 & 16 & 16.05 & 28 & 26.61 \\
80 & \textbf{35.50} & \textbf{35.84} & 35 & 35.64 & 13 & 14.49 & 15.50 & 16.03 & 27.50 & 26.08 \\
85 & \textbf{35} & \textbf{35.54} & \textbf{35} & 35.44 & 13 & 14.25 & 15 & 16.07 & 28 & 26.04 \\
90 & \textbf{35} & \textbf{35.20} & \textbf{35} & 35.14 & 13 & 14.12 & 14 & 15.97 & 28 & 25.89 \\
95 & \textbf{35} & \textbf{34.95} & \textbf{35} & 34.82 & 13 & 13.66 & 14 & 16.06 & 28 & 25.88 \\
100 & \textbf{34} & \textbf{34.63} & \textbf{34} & 34.49 & 12.50 & 13.46 & 14 & 16.06 & 26.50 & 25.62 \\
        \hline
    \end{tabular}}
    \caption{Comparison of probe selection methods using the metrics of median (mdn) and mean difference between classes.  A greater difference implies better performance, i.e., better separation of the tumor and no tumor classes by the selected probes. The greatest median and mean difference are bolded in each row.}
    \label{tab:ps}
\end{table}

\begin{table}[h]
    \centering
    \small
    \rowcolors{2}{gray!25}{white}
    \begin{tabular}{|c|c|c|c|c|c|}
        \hline
         \textbf{\# of Probes} & \textbf{SF} & \textbf{LS-D} & \textbf{DEIM} & \textbf{QR} & \textbf{SVD} \\
        \hline
5 & \textbf{2.39} & \textbf{2.39} & 16.73 & 15.94 & 4.36 \\
10 & \textbf{3.16} & \textbf{3.16} & 12.89 & 12.91 & 11.98 \\
15 & \textbf{3.36} & \textbf{3.36} & 13.80 & 12.17 & 11.93 \\
20 & 3.34 & \textbf{2.94} & 12.49 & 11.20 & 10.68 \\
25 & \textbf{3.94} & 4.80 & 11.62 & 10.64 & 9.76 \\
30 & 5.07 & \textbf{4.97} & 11.17 & 10.39 & 10.65 \\
35 & \textbf{5.21} & 5.23 & 10.75 & 11.14 & 10.40 \\
40 & \textbf{4.99} & 5.02 & 10.87 & 11.26 & 10 \\
45 & \textbf{4.88} & 4.90 & 10.54 & 11.13 & 10.22 \\
50 & \textbf{4.68} & 4.86 & 10.58 & 11.24 & 10.44 \\
55 & \textbf{4.86} & 4.97 & 10.62 & 11.27 & 10.37 \\
60 & \textbf{4.89} & 5.11 & 10.30 & 10.96 & 10.89 \\
65 & \textbf{5.02} & 5.14 & 10.15 & 11.02 & 10.54 \\
70 & \textbf{5.05} & 5.22 & 10.35 & 10.74 & 10.36 \\
75 & \textbf{5.04} & 5.16 & 10.67 & 11.03 & 10.33 \\
80 & \textbf{5.02} & 5.16 & 10.63 & 11.05 & 11.10 \\
85 & \textbf{5.06} & 5.19 & 10.46 & 11.43 & 11.30 \\
90 & \textbf{5.13} & 5.19 & 10.32 & 11.46 & 11.32 \\
95 & \textbf{5.11} & 5.30 & 10.24 & 11.31 & 11.26 \\
100 & \textbf{5.23} & 5.39 & 10.10 & 11.24 & 11.42 \\
\hline
    \end{tabular}
    \caption{Standard deviation results for probe selection methods.  The smallest standard deviation of differences is bolded in each row.} 
    \label{tab:ps_sd}
\end{table}

The probes selected by SF CUR and LS-D CUR perform very well in separating patients with a tumor from those without a tumor as they have larger median and mean differences and smaller standard deviations of differences than the probes selected by the other methods.  The probes selected by SVD also perform fairly well in this task due to their fairly large median and mean differences, but they exhibit standard deviations that are generally double those of the SF CUR and LS-D CUR probes.  The probes selected by DEIM CUR and QR CUR perform poorly in median and mean difference and exhibit standard deviations that are generally double those of the SF CUR and LS-D CUR probes as well.  

While the SF CUR and LS-D CUR methods achieve very similar results\footnote{These two methods select many of the same probes for each value of $c$.  See Table S1 in the supplementary material.}, the SF CUR outperforms the LS-D CUR in this experiment.  For 3/20 values of $c$, the probes selected by the two methods produce equal values for the median and mean difference, and standard deviation of differences (since they select the same exact probes). However, the SF CUR probes achieve a median difference greater than or equal to that of the LS-D CUR probes for all values of $c$, and a mean difference greater than or equal to that of the LS-D CUR probes for 12/20 values of $c$.  Additionally, the standard deviation of differences for the SF CUR probes is less than that of the LS-D CUR probes for 15/20 values of $c$.  
 
\section{Protein Expression Discriminant Analysis with CUR}
\label{sec:pro_exp}

Lastly, we present a novel application of CUR as a feature selection method for a clustering algorithm on protein expression data.  We modify the experiments of Higuera et al. \cite{higuera_et_al_2015} in which discriminant proteins that critically affect learning in wild type and trisomic mice were discovered in biologically relevant pairwise class comparisons with clustering provided by SOMs and feature selection provided by the Wilcoxon rank-sum test.  Specifically, we demonstrate the use and effectiveness of CUR as the feature selection method in a subset of these computational experiments.  We compare the performance of multiple CUR algorithms in this application.

\subsection{Prior Computational Experiments}
In this section we provide a summary of the dataset used and computational experiments (MATLAB R2011b) performed in \cite{higuera_et_al_2015}.  

\subsubsection{Data}
The protein expression data used in these experiments was measured from two groups of mice, control and trisomic.  Each mouse was exposed to one option from each of two treatments:
\begin{enumerate}
    \item Context fear conditioning (CFC), an associative learning assessment task, of either context-shock (CS) or shock-context (SC), and
    \item an injection of memantine, a drug known to treat learning impairment, or saline. 
\end{enumerate}
The CFC task consisted of placing a mouse in a novel cage and allowing it to explore. The context-shock option involves giving the mouse an electric shock after a few minutes of cage exploration, whereas the shock-context option involves an immediate shock to the mouse before exploration.  Control mice given the context-shock option will learn an association between cage and shock, whereas those given the shock-context option will not.  However, trisomic mice given the context-shock option will not learn the association between cage and shock. Thus, the second treatment, an injection of memantine or saline, is given prior to the CFC task.  Trisomic mice injected with memantine prior to the CFC context-shock task will learn the association as the control mice do.  Learning in control mice is not affected by memantine injection.  Table \ref{tab:mice} summarizes the eight classes of mice and presents class size and type of learning.  For the remainder of this work, we will refer to classes by their names in Table \ref{tab:mice}.

\begin{table}
    \centering
    \rowcolors{2}{gray!25}{white}
    \resizebox{\textwidth}{!}{\begin{tabular}{|l|l|l|l|l|c|}
       \hline 
       \textbf{Class Name} & \textbf{Genotype} & \textbf{CFC (Stimulation to Learn)} & \textbf{Injection}  & \textbf{Learning} & \textbf{Class Size} \\
       \hline
       c-CS-s & control & context-shock (yes) & saline & Normal & 9 \\
       c-CS-m & control & context-shock (yes) & memantine & Normal & 10 \\ 
       c-SC-s & control & shock-context (no) & saline & None & 9 \\
       c-SC-m & control & shock-context (no) & memantine & None & 10 \\
       t-CS-s & trisomic & context-shock (yes) & saline & Failed & 7 \\
       t-CS-m & trisomic & context-shock (yes) & memantine & Rescued & 9 \\ 
       t-SC-s & trisomic & shock-context (no) & saline & None & 9 \\
       t-SC-m & trisomic & shock-context (no) & memantine & None & 9 \\
       \hline
    \end{tabular}}
    \caption{Classes of 72 total mice.}
    \label{tab:mice}
\end{table}

Data consist of expression levels for 77 proteins measured from the brains of the 72 mice represented in Table \ref{tab:mice}.  Each protein was measured 15 times for each mouse giving a total of 1,080 measurements of 77 proteins, resulting in a data matrix that is 1,080 $\times$ 77. For each of the 1,080 observations, the mouse id and class of the mouse that produced the measurements are also provided.  The dataset is available in the supporting information of \cite{higuera_et_al_2015} and in the University of California, Irvine Machine Learning Repository \cite{data_pro_exp}.  

Missing data arises as a consequence of the protein measurement process. Higuera et al. processed the data by (1) removing an outlier mouse with mainly missing data, (2) filling in missing entries, and (3) normalizing the data.  For any mice in class $c$ missing data for protein $p$, the missing entries were replaced with the average expression level of protein $p$ from the reported (non-missing) entries for mice in class $c$.  Min-max normalization was then applied to each column of the data, i.e., for each protein.  When we analyzed the raw data, we could not identify the outlier mouse and suspect the data for this mouse was already removed from the dataset before it was posted for download. We found that 1.7\% of entries in the data were missing with only 6 out of 77 proteins missing 20 or more measurements out of the 1,080 total. We also discovered that two columns of the raw data are equal; these columns correspond to the proteins ARC\_N and pS6\_N.  These columns are clearly the same after the data is processed as well, and thus make the matrix of protein expression data rank deficient. 

\subsubsection{Methodology}

We first give a high-level overview of the methodology and then follow with more details.  SOMs and the Wilcoxon rank-sum test\footnote{also called the Mann-Whitney U test.} were used to discover discriminant proteins in biologically relevant pairwise class comparisons. An SOM is an unsupervised neural network clustering method that can identify the topology and distribution of data such that clusters that exist close together in the topology should cluster similar data points.  It is useful for dimension reduction and provides 2D visualization of the data.  An SOM is used in this case to cluster mice with similar protein expression levels in order to discover protein expression patterns among classes.  The data provided to the SOM include the protein expression data, but not the class of each mouse.  The Wilcoxon rank-sum test is then used to find discriminant proteins between pairs of SOM ``class-specific clusters'' of mice (details below).  This nonparametric test checks for equal medians between two independent samples of data that are not necessarily the same size. 

Higuera et al. used this method on (1) data from the four control classes, (2) data from the four trisomic classes, and (3) a mixture of the two (c-CS-s, c-CS-m, t-CS-m, t-CS-s, and c-SC-s).  We explore feature selection with CUR instead of the Wilcoxon rank-sum test on data from the four control classes only.  The particular experiments are detailed below.

Initially, ten $7 \times 7$ SOMs are computed on the processed data from the four control classes, a 570 $\times$ 77 dataset. Since SOM neuron weights are initialized randomly, each SOM instance will most likely be different. The average quantization error, $q$, is measured for each SOM as  
\[
q = \frac{1}{n} \sum_{i=1}^n \|\mathbf{d}_i - \mathbf{w}_{BMU(\mathbf{d}_i)}\|_2, 
\]
where $n$ is the number of observations, $\mathbf{d}_i \in \mathbb{R}^{77}$ is an observation (a row of the data matrix), and $\mathbf{w}_{BMU(\mathbf{d}_i)} \in \mathbb{R}^{77}$ is the weight vector of the Best Matching Unit (BMU) or closest neuron to $\mathbf{d}_i$.  The SOM with the smallest average quantization error is then used to identify ``class-specific clusters'', defined in \cite{higuera_et_al_2015} as ``(i) two or more adjacent [neurons] that contain mice of the same class and no mice from other classes, or (ii) a single [neuron] that contains $\geq$ 80\% (or $\geq$ 12 of 15) of the measurements of one mouse and no measurements of mice from any other class.'' While the class of each mouse was not used in the learning process of the SOM, the class of each mouse is used to determine the class-specific clusters.  Two class-specific clusters can be compared using the weight vectors of the neurons included in the cluster and 77 instances of the Wilcoxon rank-sum test, one for each protein (each neuron weight vector is length 77).  For example, to compare levels of the protein in column 5 of the dataset, the Wilcoxon rank-sum test would use two samples: one created from the fifth element of each neuron weight vector for the neurons in the first class-specific cluster, and one created from the fifth element of each neuron weight vector for the neurons in the second class-specific cluster.  Those proteins for which the Wilcoxon rank-sum test returns a $p$-value of less than 0.05 are considered to be the discriminant proteins between the two class-specific clusters and thus the two classes they represent. 

A new $7 \times 7$ SOM is then created using data for the discriminant proteins only (a 570 $\times$ $k$ matrix where $k$ is the number of discriminant proteins). Class-specific clusters are identified in this SOM. The discriminant proteins are validated through a qualitative analysis of this SOM, which includes the number of mixed class neurons and the number of observations in mixed class neurons as metrics to determine how well the discriminant proteins clustered the data.   

In particular, Higuera et al. found the common discriminant proteins to four pairwise comparisons involving successful learning, c-CS-s vs. c-SC-s, c-CS-m vs. c-SC-m, c-CS-m vs. c-SC-s, and c-CS-s vs. c-SC-m and then used these results in two other experiments.
\begin{enumerate}
    \item Experiment 1 discriminant proteins: The union of those between c-CS-m and c-CS-s and the common discriminant proteins between the four successful learning comparisons.
    \item Experiment 2 discriminant proteins: The union of those between c-SC-m and c-SC-s and the common discriminant proteins between the four successful learning comparisons.
\end{enumerate}
Each experiment produced an SOM that was qualitatively analyzed in order to validate the selection of discriminant proteins.  In the next two subsections of this work we describe how we used CUR as a feature selection method in this methodology and provide results for its use in Experiments 1 and 2. 

\subsection{Feature Selection Using CUR}

To use CUR as a feature selection method between two class-specific clusters, we construct a matrix $\mathbf{D}$ that contains pairwise differences between neuron weight vectors in opposite clusters.  For example if cluster A contains $a$ neurons (call their weight vectors $\mathbf{A}_1, ..., \mathbf{A}_a$) and cluster B contains $b$ neurons (call their weight vectors $\mathbf{B}_1, ..., \mathbf{B}_{b}$), then $\mathbf{D} \in \mathbb{R}^{ab \times 77}$, and 
\[
\mathbf{D}(j+b(i-1),:) = \mathbf{A}_{i} - \mathbf{B}_{j},
\]
for $i \in [a]$ and $j \in [b]$.  We then compute 77 CUR approximations of $\mathbf{D}$; one for each possible number of columns to select for the matrix $\mathbf{C}$, i.e., $1,2,...,77$\footnote{Since two columns of the raw protein expression data are equal, $\mathbf{D}$ also has this property.  Thus, the SF CUR may fail to produce a selection of columns for particular values of the number of columns to select.  In these cases, we ignore these values of the number of columns to select.}.  For this particular application, we are only interested in the chosen subset of columns selected for $\mathbf{C}$, and in each CUR approximation that we use, the columns are chosen first, independently of the rows.  Hence, we could select any number of rows for the matrix $\mathbf{R}$. We chose to use all rows and set $\mathbf{R} = \mathbf{D}$.  In order to select a CUR approximation from the 77 calculated, we compute the Akaike information criterion (AIC) \cite{akaike_1973} and the Bayesian information criterion (BIC) \cite{schwarz_1978} for each CUR model as given in the formulas below.  Let $\mathbf{D} \in \mathbb{R}^{m \times 77}$, and $\mathbf{CUR}$ be the CUR approximation to $\mathbf{D}$ where $\mathbf{C} \in \mathbb{R}^{m \times c}$.  Then,
\[
\text{AIC} = 2(mc+3) + 77m \ln \left( \frac{\| \mathbf{D} - \mathbf{CUR} \|_F^2}{77m} \right), 
\]
and
\[
\text{BIC} = (mc+3) \ln (77m) + 77m \ln \left( \frac{\| \mathbf{D} - \mathbf{CUR} \|_F^2}{77m} \right). 
\]
The CUR model with the lowest AIC score and the CUR model with the lowest BIC score are selected.  The columns chosen to be in the matrix $\mathbf{C}$ of each CUR correspond to the discriminant proteins.  We then train two SOMs: the first using the discriminant proteins from the CUR model with the lowest AIC and the second using the discriminant proteins from the CUR model with the lowest BIC.  Hence, using CUR for feature selection will result in two possibilities for the set of discriminant proteins, which can be compared through a qualitative assessment of the SOMs trained on each set.  

\subsection{Results}

We repeated Experiments 1 and 2 by Higuera et al. with two exceptions: (1) we used CUR as the feature selection process instead of the Wilcoxon rank-sum test, and (2) each time we needed to use an SOM, we trained 10 SOMs and chose the one with the minimum number of mixed-class neurons. If multiple SOMs had the minimum number of mixed-class neurons, we chose the SOM with the minimum number of observations in mixed-class neurons. We focused on these metrics based on their importance in the qualitative analysis of the discriminant protein SOMs in \cite{higuera_et_al_2015}.  We compared the performance of four CUR algorithms in this feature selection task, SF CUR, LS-D CUR, DEIM CUR, and QR CUR, as well as that of the Wilcoxon rank-sum test.  In all experiments, the LS-D CUR rank parameter for leverage score computation was 2. All experiments were run in MATLAB R2023b on a laptop with 16GB RAM and a 2.20 GHz Intel Core i7-1360P processor\footnote{The SOM implementation in MATLAB's Deep Learning Toolbox was used with the default parameters except the SOM size.}.

Figure \ref{fig:som_all} presents the SOM using all 77 proteins.  Neurons are labeled with the classes of mice they contain in sorted order, i.e., the first class listed is the majority class, and the number of observations contained in the neuron.  Neurons are colored by their majority class -  c-CS-m: yellow, c-CS-s: green, c-SC-m: tan, c-SC-s: orange - and a bold outline of a neuron represents class-specific cluster membership - c-CS-m class cluster: red outline, c-CS-s class cluster: green outline, c-SC-m class cluster: brown outline, c-SC-s class cluster: black outline.  This color scheme is based on that in \cite{higuera_et_al_2015}. 

\begin{figure}
    \centering
    \includegraphics{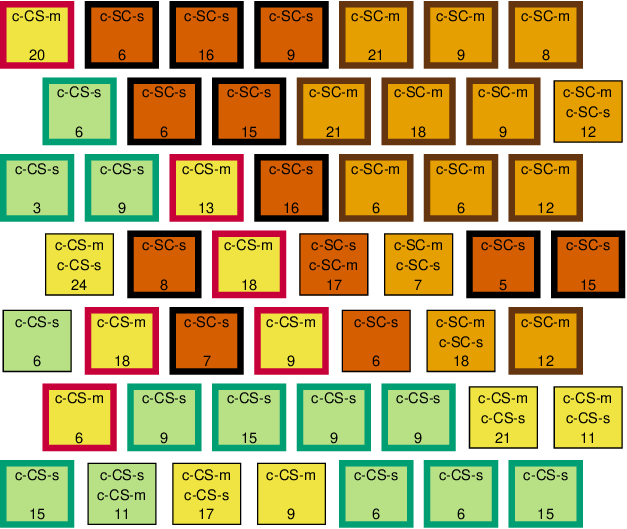}
    \caption{SOM using all 77 proteins.}
    \label{fig:som_all}
\end{figure}

We define a mixed-CS-class neuron as a mixed-class neuron that includes either c-CS-m or c-CS-s observations, and a mixed-SC-class neuron as a mixed-class neuron that includes either c-SC-m or c-SC-s observations. As a reference for comparison in the results of Experiments 1 and 2, the SOM in Figure \ref{fig:som_all} has five mixed-CS-class neurons which contain 84 observations and four mixed-SC-class neurons which contain 54 observations.  The goal of Experiment 1 is to select discriminant proteins such that when an SOM is trained on the discriminant protein data only, the SOM improves, i.e., has a smaller number of mixed-CS-class neurons and observations contained within those neurons, as compared to the SOM in Figure \ref{fig:som_all} that was trained on all protein data.  The goal of Experiment 2 is similar except that the discriminant protein SOM should have a smaller number of mixed-SC-class neurons and observations contained within those neurons.  Since each CUR algorithm results in two potential sets of discriminant proteins (one for the CUR with minimum AIC and one for the CUR with minimum BIC), we present results for nine feature selection methods: (1) Wilcoxon rank-sum test, (2-3) SF CUR AIC/BIC, (4-5) LS-D CUR AIC/BIC, (6-7) DEIM CUR AIC/BIC, and (8-9) QR CUR AIC/BIC.  

\subsubsection{Experiment 1}

For each feature selection method we (1) identified the common discriminant proteins between the four successful learning comparisons, c-CS-s vs. c-SC-s, c-CS-m vs. c-SC-m, c-CS-m vs. c-SC-s, and c-CS-s vs. c-SC-m, and (2) identified the discriminant proteins between the c-CS-m and c-CS-s classes. The union of these two sets of proteins is the set of discriminant proteins.  We present the results of Experiment 1 in Table \ref{tab:CS_res}. In addition, Figure \ref{fig:CS_aic} contains the discriminant protein SOMs for the Wilcoxon rank-sum test and each CUR algorithm using the AIC model selection criteria.  Since the CUR algorithms using the AIC criteria generally outperformed those using the BIC criteria, the discriminant protein SOMs for the CUR algorithms using the BIC model selection criteria are found in Figure 1 of the supplementary material. 

\begin{figure}[H]
    \centering
    \begin{subfigure}{.5\textwidth}
        \centering
        \includegraphics[width=0.9\textwidth]{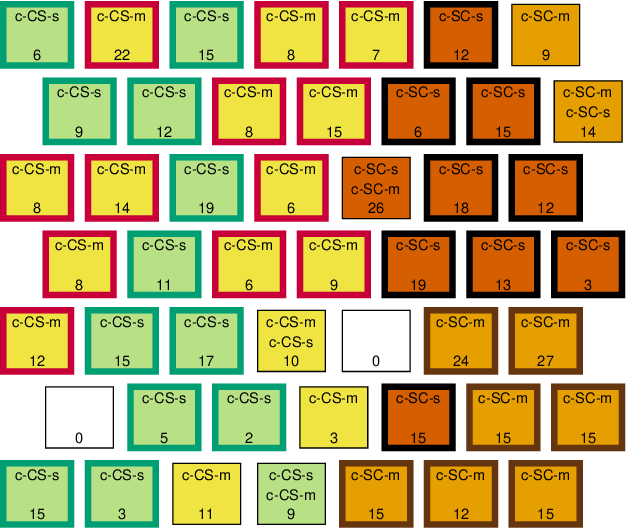}
        \caption{Wilcoxon rank-sum test}
    \end{subfigure}%
    \begin{subfigure}{.5\textwidth}
        \centering
        \includegraphics[width=0.9\textwidth]{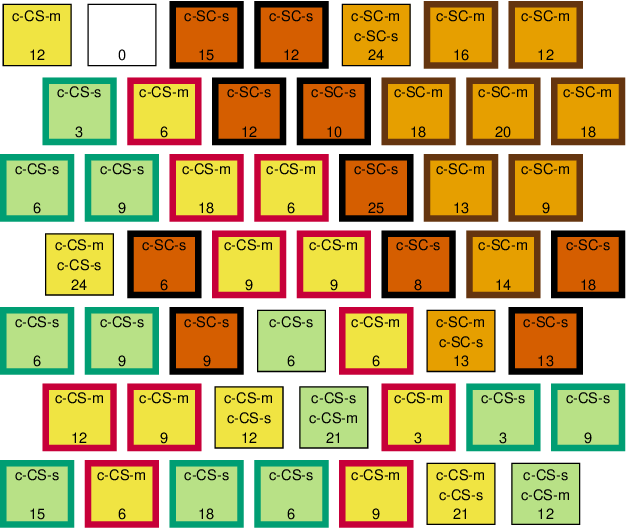}
        \caption{SF CUR, AIC}
    \end{subfigure}
    \par \bigskip \smallskip 
    \begin{subfigure}{.5\textwidth}
        \centering
        \includegraphics[width=0.9\textwidth]{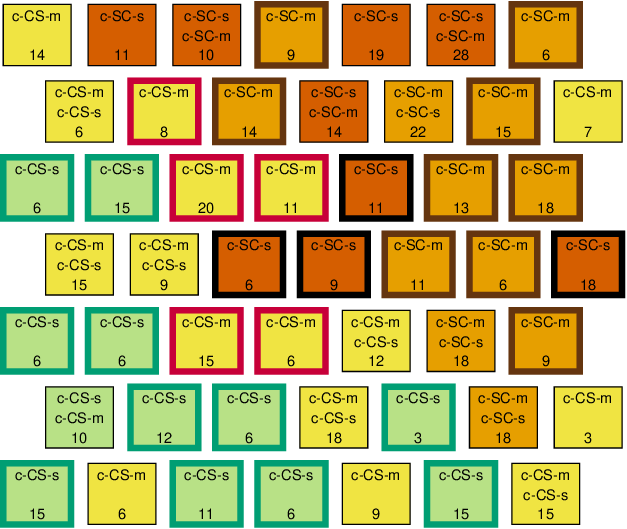}
        \caption{LS-D CUR, AIC}
    \end{subfigure}%
    \begin{subfigure}{.5\textwidth}
        \centering
        \includegraphics[width=0.9\textwidth]{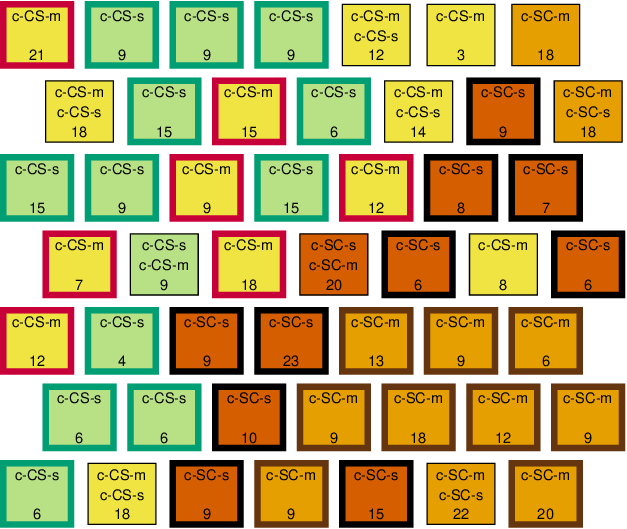}
         \caption{DEIM CUR, AIC}
    \end{subfigure}
    \par \bigskip \smallskip 
    \begin{subfigure}{.5\textwidth}
        \centering
        \includegraphics[width=0.9\textwidth]{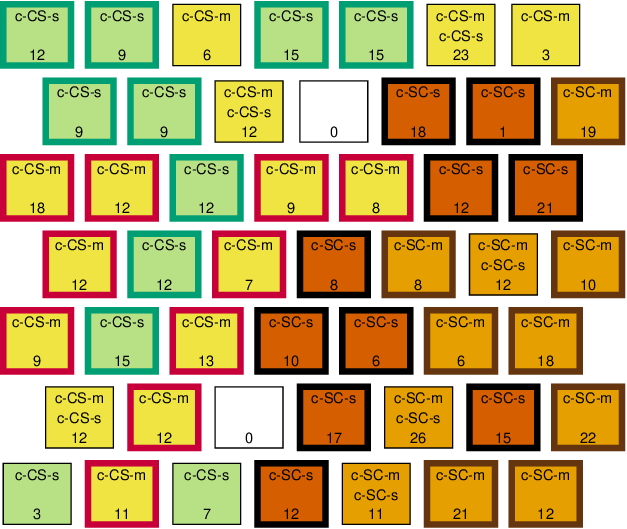}
         \caption{QR CUR, AIC}
    \end{subfigure}
    \caption{Experiment 1 discriminant protein SOMs for the Wilcoxon rank-sum test and CUR algorithms using the AIC model selection criteria.}
    \label{fig:CS_aic}
\end{figure}

\begin{table}[h]
    \centering
    \rowcolors{2}{gray!25}{white}
    \begin{tabular}{|c|c|c|c|}
        \hline
        \textbf{Feature Selection} & \textbf{\# Mixed-CS} & \textbf{\# Observations in} & \textbf{\# Discriminant} \\
        \textbf{Method} & \textbf{Neurons} & \textbf{Mixed-CS Neurons} & \textbf{Proteins} \\
        \hline
        None & 5 & 84 & - \\
        Wilcoxon rank-sum test & \textbf{2} & \textbf{19} & 15 \\
        SF CUR - AIC  &  5 & 90 & 35 \\
        SF CUR - BIC  & 7 & 91 & 21 \\
        LS-D CUR - AIC & 7 & 85 & 49 \\
        LS-D CUR - BIC & 6 &  105 & 28 \\
        DEIM CUR - AIC  & 5 & 71 & 20 \\
        DEIM CUR - BIC  & 7 & 90 & 18 \\
        QR CUR - AIC  & 3 & 47 & 18 \\
        QR CUR - BIC & 7 & 77 & 16 \\
        \hline
    \end{tabular}
    \caption{Experiment 1 results.  The minimum numbers of mixed-CS-class neurons and observations are in bold.}
    \label{tab:CS_res}
\end{table}

The Wilcoxon rank-sum test performs the best of the feature selection methods, resulting in two mixed-CS-class neurons and 19 observations in those neurons, which is by far the minimum number of observations in mixed-CS-class neurons.  It is interesting to note that not only does the Wilcoxon rank-sum test perform the best, but it also selects the fewest number of discriminant proteins.  Amongst CUR algorithms, the QR CUR - AIC performs the best, resulting in only three mixed-CS-class neurons containing 47 observations.  All CUR algorithms except the QR CUR - AIC, QR CUR - BIC, and DEIM CUR - AIC perform worse than the baseline of no feature selection. Although, the QR CUR - BIC results are mixed: there are two more mixed-CS-class neurons, but these neurons contain less observations than that of the baseline.  While CUR-based feature selection did not perform as well as the Wilcoxon rank-sum test for this experiment, we will see a different result in Experiment 2.  

\subsubsection{Experiment 2}

For each feature selection method we (1) again identified the common discriminant proteins between the four successful learning comparisons, c-CS-s vs. c-SC-s, c-CS-m vs. c-SC-m, c-CS-m vs. c-SC-s, and c-CS-s vs. c-SC-m, and (2) identified the discriminant proteins between the c-SC-m and c-SC-s classes. The union of these two sets of proteins is the set of discriminant proteins.  Results for Experiment 2 are presented in Table \ref{tab:SC_res}.  The discriminant protein SOMs for the Wilcoxon rank-sum test and CUR algorithms using the AIC model selection criteria are given in Figure \ref{fig:SC_aic}.  The discriminant protein SOMs for CUR algorithms using the BIC model selection criteria are given in Figure 2 of the supplementary material. 

\begin{figure}[H]
    \centering
    \begin{subfigure}{.5\textwidth}
        \centering
        \includegraphics[width=0.9\textwidth]{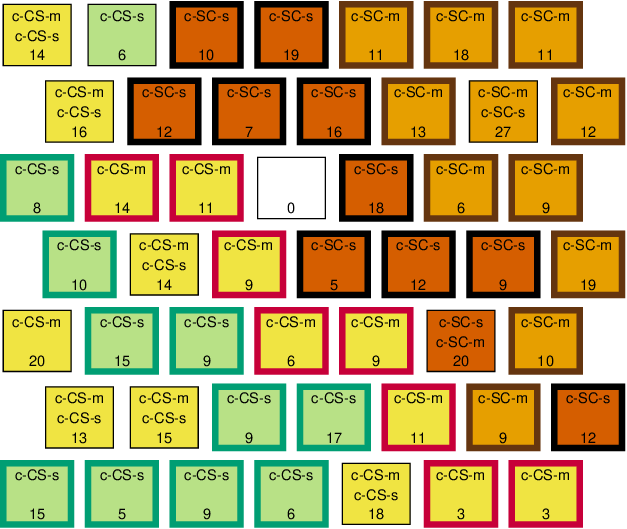}
        \caption{Wilcoxon rank-sum test}
    \end{subfigure}%
    \begin{subfigure}{.5\textwidth}
        \centering
        \includegraphics[width=0.9\textwidth]{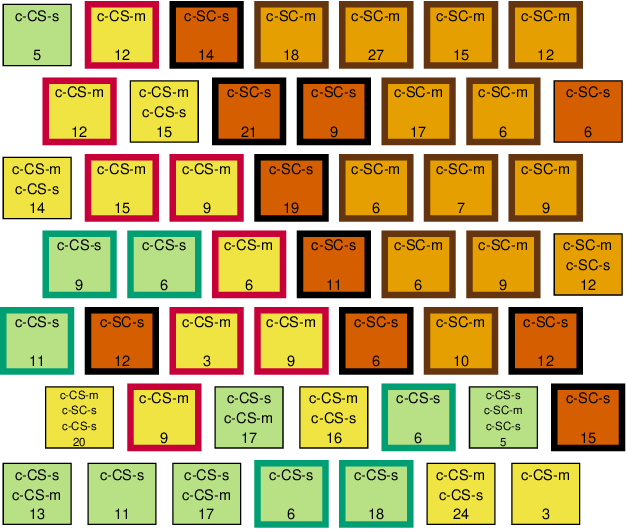}
        \caption{SF CUR, AIC}
    \end{subfigure}
    \par \bigskip \smallskip 
    \begin{subfigure}{.5\textwidth}
        \centering
        \includegraphics[width=0.9\textwidth]{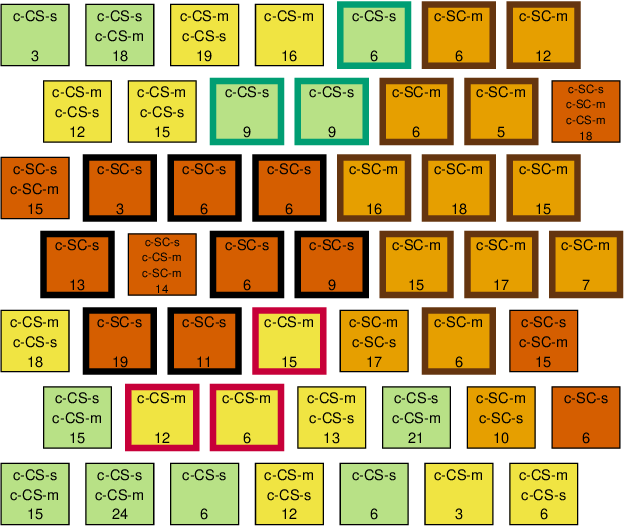}
        \caption{LS-D CUR, AIC}
    \end{subfigure}%
    \begin{subfigure}{.5\textwidth}
        \centering
        \includegraphics[width=0.9\textwidth]{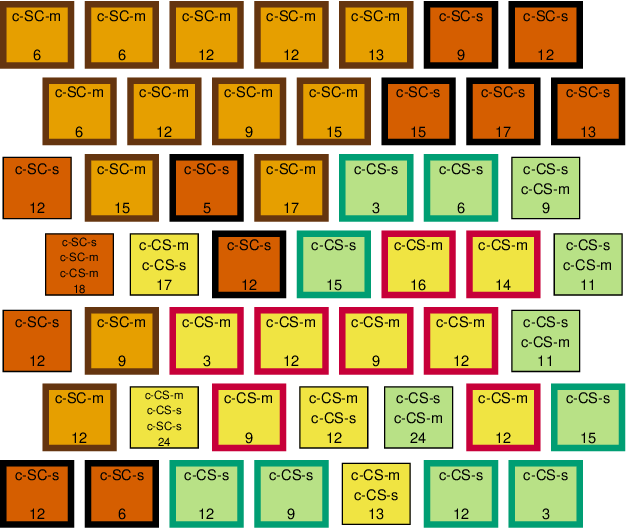}
         \caption{DEIM CUR, AIC}
    \end{subfigure}
    \par \bigskip \smallskip 
    \begin{subfigure}{.5\textwidth}
        \centering
        \includegraphics[width=0.9\textwidth]{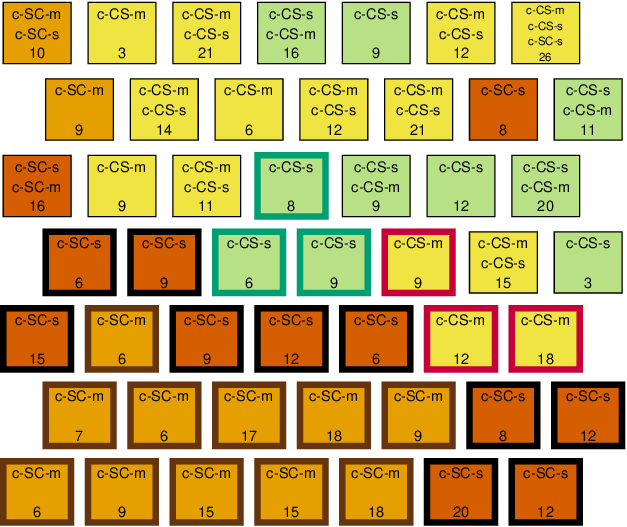}
         \caption{QR CUR, AIC}
    \end{subfigure}
    \caption{Experiment 2 discriminant protein SOMs for the Wilcoxon rank-sum test and CUR algorithms using the AIC model selection criteria.}
    \label{fig:SC_aic}
\end{figure}

\begin{table}[h]
    \centering
    \rowcolors{3}{gray!25}{white}
   \begin{tabular}{|c|c|c|c|}
        \hline
        \textbf{Feature Selection} & \textbf{\# Mixed-SC} & \textbf{\# Observations in} & \textbf{\# Discriminant} \\
        \textbf{Method} & \textbf{Neurons} & \textbf{Mixed-SC Neurons} & \textbf{Proteins} \\
        \hline
        None & 4 & 54 & - \\
        Wilcoxon Rank Sum Test  & \textbf{2} & 47 & 27 \\
        SF CUR - AIC & 3 & \textbf{37} & 49 \\
        SF CUR - BIC  & 7 & 99 & 27 \\
        LS-D CUR - AIC  & 6 & 89 & 29 \\
        LS-D CUR - BIC  & 8  & 109 & 28 \\
        DEIM CUR - AIC  & \textbf{2}  & 42 & 26 \\
        DEIM CUR - BIC  & \textbf{2}  & 42 & 26 \\
        QR CUR - AIC &  3 & 52 & 25 \\
        QR CUR - BIC &  4 & 67 & 20 \\
        \hline
    \end{tabular}
    \caption{Experiment 2 results.  The minimum numbers of mixed-SC-class neurons and observations are in bold.}
    \label{tab:SC_res}
\end{table}

The best performing feature selection methods are the DEIM CUR - AIC and DEIM CUR - BIC, each resulting in two mixed-SC-class neurons containing 42 observations.  The Wilcoxon rank-sum test also performs well, resulting in two mixed-SC-class neurons containing 47 observations.  While the SF CUR - AIC results in three mixed-SC-class neurons, it does achieve the minimum number of observations in mixed-SC-class neurons with 37; however, it selects 49 discriminating proteins which is at least 20 more than all other feature selection methods.  The QR CUR - AIC performs relatively well; however, not as well as the Wilcoxon rank-sum test since it results in three mixed-SC-class neurons containing 52 observations. The SF CUR - BIC, LS-D CUR - AIC, and LS-D CUR - BIC perform poorly compared to the other feature selection methods.  In addition, these three methods and the QR CUR - BIC perform worse than the baseline of no feature selection. 

The results of Experiments 1 and 2 demonstrate that CUR-based feature selection can be an effective feature selection method for this application, but results are data and CUR algorithm dependent.  Nonetheless, we demonstrated that DEIM-CUR is an excellent option for feature selection in Experiment 2.

\section{Conclusion}
\label{sec:cur_conc}

We have presented SF CUR, a novel CUR matrix approximation using convex optimization with supporting theory and numerical experiments. Specifically, the SF CUR algorithm uses the surrogate functional \cite{daubechies_etal_2004} to solve the convex optimization problems that arise in the method.  To the best of our knowledge, this is the only CUR method using convex optimization that solves for $\mathbf{C}$ and $\mathbf{R}$ separately and allows the user to choose the number of columns and rows for inclusion in $\mathbf{C}$ and $\mathbf{R}$, respectively.  In addition, we extended the theory of the surrogate functional technique to apply to SF CUR.  
We numerically demonstrated the use of SF CUR on sparse and dense data.  Specifically, we (1) calculated its relative error and computation time on a document-term dataset and a gene expression dataset, and (2) used it as a feature selection method on the gene expression dataset in order to classify patients as those with or without a tumor as in \cite{sorenson_embree_2016}.  We compared SF CUR performance on these numerical tasks to the SVD and other complementary CUR approximations.  We found that while the SVD provides the optimal approximation to each dataset, SF CUR performed the best in selecting probes to separate patient classes on the gene expression dataset, with LS-D CUR a close second in performance.

We also presented a novel application of CUR to determine discriminant proteins when clustering protein expression data in an SOM.  These experiments were based on those in \cite{higuera_et_al_2015}, with the exception that CUR was used as the feature selection method instead of the Wilcoxon rank-sum test.  We compared the performance of SF CUR with that of the Wilcoxon rank-sum test and other complementary CUR approximations.  We performed two experiments and found that performance varied between datasets and CUR algorithms.  While CUR-based feature selection performance was generally poor in Experiment 1, multiple CUR algorithms performed well in Experiment 2.  In fact, DEIM CUR - AIC and DEIM CUR - BIC were the best performing feature selection methods in Experiment 2.  In addition, to the best of our knowledge, this was the first use of CUR on protein expression data.  

Potential areas for future work include generalizing the SF CUR objective function as mentioned in Section \ref{sec:cur_gen}, implementation speed-ups for SF CUR on sparse data, and investigating why certain CUR algorithms perform better than others in particular applications or on particular datasets.

\section*{Conflict of Interest Statement}
The authors declare that the research was conducted in the absence of any commercial or financial relationships that could be construed as a potential conflict of interest.

\section*{Funding}
Radu Balan was partially supported by NSF under DMS-2108900, and by Simons Fellow award 818333.

\section*{Acknowledgments}
The authors acknowledge Research Computing at The University of Virginia for providing computational resources that have contributed to the results reported within this paper (URL: \url{https://rc.virginia.edu}). 

\section*{Data Availability Statement}
The original contributions presented in the study are included in the article/supplementary material, further inquiries can be directed to the corresponding author.

\section*{Generative AI Statement}
The authors declare that no Gen AI was used in the creation of this manuscript.

\bibliography{refs}

\end{document}